\documentclass[11pt]{article}

\usepackage[margin=1in]{geometry}
\usepackage[T1]{fontenc}
\usepackage[utf8]{inputenc}
\usepackage[utopia]{mathdesign}
\usepackage{microtype}

\usepackage{amsmath,amsthm,mathtools}

\usepackage{enumitem}
\usepackage{array,booktabs}
\usepackage{xcolor}
\usepackage{hyperref}

\hypersetup{
  colorlinks=true,
  linkcolor=blue,
  citecolor=blue,
  urlcolor=blue
}

\usepackage{xparse}

\theoremstyle{plain}
\newtheorem{innerthm}{Theorem}[section]
\newtheorem{innerlem}[innerthm]{Lemma}
\newtheorem{innerprop}[innerthm]{Proposition}
\newtheorem{innercor}[innerthm]{Corollary}

\theoremstyle{definition}
\newtheorem{innerdef}[innerthm]{Definition}
\newtheorem{innerasm}[innerthm]{Assumption}
\newtheorem{innerex}[innerthm]{Example}

\theoremstyle{remark}
\newtheorem{innerrem}[innerthm]{Remark}

\NewDocumentEnvironment{theorem}{m m}%
  {\begin{innerthm}[{#1}]\label{thm:#2}}%
  {\end{innerthm}}
\NewDocumentEnvironment{lemma}{m m}%
  {\begin{innerlem}[{#1}]\label{lem:#2}}%
  {\end{innerlem}}
\NewDocumentEnvironment{proposition}{m m}%
  {\begin{innerprop}[{#1}]\label{prop:#2}}%
  {\end{innerprop}}
\NewDocumentEnvironment{corollary}{m m}%
  {\begin{innercor}[{#1}]\label{cor:#2}}%
  {\end{innercor}}
\NewDocumentEnvironment{definition}{m m}%
  {\begin{innerdef}[{#1}]\label{def:#2}}%
  {\end{innerdef}}
\NewDocumentEnvironment{assumption}{m m}%
  {\begin{innerasm}[{#1}]\label{asm:#2}}%
  {\end{innerasm}}
\NewDocumentEnvironment{example}{m m}%
  {\begin{innerex}[{#1}]\label{ex:#2}}%
  {\end{innerex}}
\NewDocumentEnvironment{remark}{m m}%
  {\begin{innerrem}[{#1}]\label{rem:#2}}%
  {\end{innerrem}}

\DeclareMathOperator{\dom}{dom}
\DeclareMathOperator{\gph}{gph}
\DeclareMathOperator{\cone}{cone}
\DeclareMathOperator{\Min}{Min}
\DeclareMathOperator{\Int}{int}
\DeclareMathOperator{\cl}{cl}
\DeclareMathOperator*{\Limsup}{Lim\,sup}
\DeclareMathOperator*{\Liminf}{Lim\,inf}
\DeclareMathOperator{\dist}{dist}

\newcommand{\R}{\mathbb{R}}
\newcommand{\N}{\mathbb{N}}

\newcommand{\DD}{D^{2}}               

\newcommand{\sm}{\rightrightarrows}

\title{Second-Order Sensitivity of Efficient Solution Maps\\
       in Parametric Vector Optimization with Set Constraints}
\author{%
  N.~X.~D. Bao\thanks{Independent Researcher, Ho Chi Minh City, Vietnam.
    Email: \texttt{nxdbao@gmail.com}.}%
  \and
  Tan H. Cao\thanks{Department of Applied Mathematics and Statistics,
    SUNY (State University of New York) Korea, Yeonsu-Gu, Incheon, Korea.
    Email: \texttt{tan.cao@stonybrook.edu}.}%
}
\date{\today}

\begin{document}
\maketitle

\begin{abstract}

We study second-order sensitivity of the efficient solution map \(S\)
of a parametric vector optimization problem \(\min_C f(p,x)\) subject
to \(x\in H(p)\), in the fixed-ray upper and lower Dini framework.
Under a value-to-decision error bound (VDB), the second-order
semi-derivative of the marginal map \(\Phi\) transfers to an
exact second-order Dini formula for \(S\). This VDB route requires neither
a singleton base fiber nor injectivity or strict monotonicity of the
decision derivative \(\nabla_x f\) of the objective. For structured feasible maps
\(H(p)=\{x\in\Omega:g(p,x)\in D\}\), we obtain explicit primitive-data
formulas for \(\DD H\), \(\DD\Phi\), and \(\DD S\) under Robinson
metric regularity along \(\Omega\), second-order regularity of
\(\Omega\) and \(D\), and directional second-order semi-derivability
of the data. The framework is specialized to polyhedral
inequality/equality systems and illustrated on a parametric
multi-objective portfolio family for which (VDB) is verified with an
explicit constant, and on a two-branch example where the base fiber
is not a singleton.

\end{abstract}

\noindent\textbf{Keywords.} parametric vector optimization; efficient
solution map; marginal map; second-order Dini derivative; set-valued
directional derivative; Robinson metric regularity; value-to-decision
error bound; uniform Henig efficiency; parametric constraint system.

\medskip
\noindent\textbf{MSC 2020.} 49J53; 49K40; 90C29; 90C31; 90C46.

\section{Introduction}\label{sec:intro}

{Sensitivity analysis studies how the objects of an optimization
problem vary when the data are perturbed. In scalar optimization this
usually means the value function and the solution map. In vector
optimization the same distinction becomes more delicate. The marginal map
\(\Phi\) collects the efficient objective values, while the efficient
solution map \(S\) collects the decisions that realize them. Thus a
formula for \(\Phi\) describes the movement of the efficient frontier, but
it does not by itself describe the movement of efficient decisions.
For example, robust multi-objective portfolio models
\cite{MohsenyTonekabony2025,GarciaBernabeu2024,Varmaz2024,Kovalenko2025}
require decision-level information under parameter shocks.}

{First-order sensitivity of efficient solution and marginal maps
is now well developed. Luc, Soleimani-Damaneh, and
Zamani~\cite{Luc2018SIAM} prove semi-derivability of \(S\) and \(\Phi\)
under uniform Henig efficiency and a domination-type hypothesis. Huy and
Lee~\cite{HuyLee2008} study first-order sensitivity of solution maps for
parametric generalized equations. Minchenko and
Tarakanov~\cite{MinchenkoTarakanov2015}, Bondarevsky, Leschov, and
Minchenko~\cite{Bondarevsky2016}, and Minchenko and
Stakhovski~\cite{MinchenkoStakhovski2011} obtain related value-function
sensitivity results under relaxed constraint qualifications. Within the
fixed-ray Dini framework used here, Bao, Khanh, and
Tung~\cite{Bao2025JOTA} derive second-order formulas for the feasible
image map \(\mathcal F\) and the marginal map \(\Phi\) in the abstract
inclusion model \(x\in H(p)\). Their analysis belongs to the general line
of second-order perturbation analysis
\cite{BonnansShapiro2000,AubinFrankowska1990,RockafellarWets1998} and
Robinson-type metric regularity
\cite{Robinson1976,Robinson1979,ChieuYaoYen2010,DontchevRockafellar2009}.}

{Second-order sensitivity of efficient solution and efficient-point maps
has already been studied through proto-, contingent-, and related
generalized derivatives. Tung~\cite[Thm.~3.1]{Tung2018} proves
second-order proto-differentiability of the efficient solution map. Pham
studies the Benson-proper maps in~\cite{Pham2022} and the Henig global
proper maps in~\cite{Pham2023}; Pham and Nguyen~\cite{PhamNguyen2024}
give inner and outer evaluations for the efficient point multifunction.
Chuong~\cite{Chuong2013} treats first-order derivatives of that
multifunction, while Li, Sun, and Zhai~\cite{LiSunZhai2012} develop
second-order contingent calculus for the weak perturbation map. Thus the
issue here is not the existence of a second-order decision-level theory.
The recurring difficulty is the converse passage from objective-value
variations to decision variations when several decision directions have
the same objective image. Representative earlier formulas use pointwise
identification assumptions. At first order,
Luc, Soleimani-Damaneh, and Zamani~\cite[Cor.~22]{Luc2018SIAM} assume a
strict efficient base solution and injectivity of \(\nabla_x f\) on the
relevant feasible derivative set. At second order,
Tung~\cite[Thm.~3.1]{Tung2018} assumes strict monotonicity of
\(\nabla_x f(\bar p,\bar x)(\cdot)\), which identifies decision
directions with the same objective derivative in the proof.}

{Coderivatives and limiting subdifferentials provide dual tools for
stability and sensitivity analysis of constraint and solution maps in
nonsmooth variational analysis~\cite[Chs.~8--10]{RockafellarWets1998}.
We instead use a primal fixed-ray construction that follows the
second-order motion of efficient decisions and supplies realizing sequences.
The two viewpoints are complementary; a coderivative counterpart of the
\((VDB)\) transfer lies outside the scope of this paper.}

{The value-to-decision error bound takes a set-valued route. It does not
require an objective-value variation to identify a unique decision
variation. Instead, it controls the distance of a feasible point to the
efficient solution set by its residual to the efficient-value set.
Consequently, the base fiber may be non-singleton, and neither injectivity
nor strict monotonicity of \(\nabla_x f\) is imposed. Within the fixed-ray
Dini framework, this mechanism transfers second-order sensitivity from
\(\Phi\) to \(S\) and yields the exact decision-level formula developed
below.}

{A second component is the structured realization of the value-level
input. Many models are not given by an abstract feasible map \(H\), but
by a constraint system}
\[
    H(p)=\{x\in\Omega:g(p,x)\in D\}.
\]
{We therefore derive second-order formulas for \(H\) and \(\Phi\) directly
from the primitive data \(\Omega\), \(D\), and \(g\), and then combine
them with the value-to-decision transfer.}

{The contribution is organized along four axes: a set-valued
value-to-decision transfer through \((VDB)\), an exact decision-level Dini
equality, a fixed-ray upper/lower Dini framework at second order, and
structured calculus in the primitive data \((\Omega,D,g)\).}

\medskip
\noindent\textbf{The main contributions are the following.}
\begin{enumerate}[label=(P\arabic*)]
\item a \emph{set-valued value-to-decision transfer}
      \(\DD\Phi\xrightarrow{\textup{(VDB)}}\DD S\), yielding an exact
      second-order Dini formula and semi-derivability of \(S\)
      (Theorems~\ref{thm:DDS-semideriv} and~\ref{thm:DDS-struct}). The
      base efficient-value fiber may be non-singleton, and no injectivity
      or strict monotonicity of \(\nabla_x f\) is imposed;
\item a \emph{structured realization} for
      \(H(p)=\{x\in\Omega:g(p,x)\in D\}\): primitive-data second-order
      formulas for \(H\), \(\Phi\), and \(S\) under Robinson metric
      regularity and the stated objective-aware conditions
      (Proposition~\ref{prop:DDH-str} and
      Theorems~\ref{thm:DDPhi-str} and~\ref{thm:DDS-struct}). These
      formulas complement the abstract-map second-order theory of Bao,
      Khanh, and Tung~\cite[Thms.~4.2--4.3]{Bao2025JOTA};
\item verification on explicit models: a three-asset portfolio family
      with global \((VDB)\) constant \(\sqrt6\)
      (Proposition~\ref{prop:VDB-portfolio}), the two-branch fiber in
      Example~\ref{ex:twobranch-VDB}, and the noninjective example
      in Example~\ref{ex:setvalued-S}, together with the numerical Dini
      prediction check in Appendix~\ref{subsec:app-numerical}.
\end{enumerate}

{The paper is organized as follows. Section~\ref{sec:prelim}
fixes notation, collects the regularity notions and \((VDB)\), and recalls
the prior sensitivity results relevant for comparison in
Subsection~\ref{subsec:prior}. Section~\ref{sec:FPhi-struct} proves the structured
second-order formulas for \(H\) and \(\Phi\). Section~\ref{sec:effsol}
develops the second-order sensitivity theory for \(S\) in the abstract
inclusion model. Section~\ref{sec:paramsys} transfers the result to
structured systems and gives verifiable Robinson-type assumptions.
Section~\ref{sec:applications} applies the formulas to a portfolio family
and a two-branch example satisfying \((VDB)\).
Section~\ref{sec:conclusion} concludes.}

\section{Preliminaries}\label{sec:prelim}

\subsection{Basic notation}\label{subsec:notation}
{Throughout the paper we work in finite-dimensional Euclidean
spaces and follow the notation of \cite{Bao2025JOTA}. Let \(\N\) denote the
natural numbers, and let \(\R^{n}\) be equipped with the standard norm
\(\|\cdot\|\) and inner product \(\langle\cdot,\cdot\rangle\). The open unit
ball of \(\R^{n}\) is denoted by \(B^{n}\), and \(B^{n}(x,r)\) denotes the
open ball centered at \(x\) with radius \(r\). For
\(M\subset\R^{n}\), \(\Int M\), \(\cl M\), and \(\cone M\) denote,
respectively, the interior, closure, and conic hull of \(M\), with
\(0\in\cone M\). We write
\(\dist(x,M):=\inf\{\|x-x'\|\,:\,x'\in M\}\).}

Throughout, \(C\subset\R^{n}\) is a nontrivial pointed closed convex cone,
meaning \(\{0\}\subsetneq C\subsetneq\R^{n}\). We call such a cone
\emph{proper}; this convention does not require \(C\) to be solid
\textup{(}to have nonempty interior\textup{)}. All later Henig-type
constructions use a dilating cone \(K\) that need not equal \(C\). The cone
\(C\) induces the partial order
\[
    y_{1}\leqslant_{C} y_{2} \iff y_{2}-y_{1}\in C
    \qquad(y_{1},y_{2}\in\R^{n}),
\]
with positive polar cone
\(C^{\ast}:=\{c^{\ast}\in\R^{n}\,:\,\langle c^{\ast},y\rangle\geqslant 0\text{ for all }y\in C\}\).

For the parametric problem, \(P=\R^s\), \(X=\R^m\), and \(Y=\R^n\).
The objective is \(f:P\times X\to Y\), and \(H:P\sm X\) is the feasible
decision map. The feasible image, marginal, and efficient solution maps are
\begin{align}
    \mathcal F(p) &:= f(p,H(p)) = \{f(p,x)\,:\,x\in H(p)\},\label{eq:Fdef}\\
    \Phi(p) &:= \Min\nolimits_{C} \mathcal F(p),\label{eq:Phidef}\\
    S(p) &:= \{x\in H(p)\,:\,f(p,x)\in\Phi(p)\}.\label{eq:Sdef}
\end{align}
The notation used throughout the paper is collected below.

\begin{table}[ht]
\centering
\footnotesize
\caption{Main notation and conventions}
\label{tab:notation}
\begin{tabular}{@{}p{0.38\linewidth}p{0.54\linewidth}@{}}
\toprule
Symbol & Meaning \\
\midrule
\(P,X,Y\) & parameter, decision, and objective spaces \\
\(C,C^*,K\) & ordering cone, positive polar, and a dilating cone for Henig efficiency \\
\(H(p)\) & feasible decision map \\
\(\mathcal F(p)=f(p,H(p))\) & feasible image map \\
\(\Phi(p)=\Min_C\mathcal F(p)\) & marginal or efficient-value map \\
\(S(p)=\{x\in H(p):f(p,x)\in\Phi(p)\}\) & efficient solution map \\
\(\Min_C A\) & \(C\)-minimal points of \(A\), with \(\Min_C\emptyset=\emptyset\) \\
\(\dist(z,A)\) & distance to \(A\), with \(\dist(z,\emptyset)=+\infty\) \\
\bottomrule
\end{tabular}
\end{table}

For a set-valued mapping \(F:\R^{m}\sm\R^{n}\), the \emph{domain} and
\emph{graph} of \(F\) are
\(\dom F=\{x\in\R^{m}\,:\,F(x)\ne\emptyset\}\) and
\(\gph F=\{(x,y)\in\R^{m}\times\R^{n}\,:\,y\in F(x)\}\), respectively.
The Painlev\'{e}--Kuratowski outer and inner limits of \(F\) are
\begin{align*}
    \Limsup_{x\to\bar x} F(x)
    &= \{y\in\R^{n}\,:\,\exists x_{k}\to\bar x,\exists y_{k}\in F(x_{k}),y_{k}\to y\},\\
    \Liminf_{x\to\bar x} F(x)
    &= \{y\in\R^{n}\,:\,\forall x_{k}\to\bar x,\exists y_{k}\in F(x_{k}),y_{k}\to y\},
\end{align*}
in the sense of \cite[Ch.~1]{AubinFrankowska1990}
and~\cite[Ch.~4]{RockafellarWets1998}. We say \(F\) is \emph{outer (inner)
semicontinuous} at \(\bar x\) if
\(\Limsup_{x\to\bar x}F(x)\subset F(\bar x)\) (respectively,
\(\Liminf_{x\to\bar x}F(x)\supset F(\bar x)\)). \(F\) is outer
semicontinuous at every \(x\in\R^{m}\) if and only if \(\gph F\) is closed,
cf.~\cite[Prop.~1.4.4]{AubinFrankowska1990} and~\cite[Thm.~5.7]{RockafellarWets1998}.

For a set \(M\subset\R^{n}\) with \(\bar x\in M\), the \emph{contingent
cone} is \(T(M,\bar x)=\Limsup_{t\downarrow 0}(M-\bar x)/t\).
Second-order tangent-like sets, introduced by Ben-Tal and
Zowe~\cite{BenTalZowe1982} and developed systematically in
Cominetti~\cite{Cominetti1990}, Penot~\cite{Penot1998} and
Bonnans--Shapiro~\cite[Sect.~3.2]{BonnansShapiro2000}, are recalled below.

\begin{definition}{Second-order tangent sets}{2ndtan}
Let \(\bar x\in M\subset\R^{n}\) and \(u\in\R^{n}\).
\begin{enumerate}[label=(\roman*)]
\item The \emph{second-order contingent set} of \(M\) at \(\bar x\) in
      direction \(u\) is
      \[
          T^{2}(M,\bar x,u)=\Limsup_{t\downarrow 0}\frac{M-\bar x-tu}{t^{2}}.
      \]
\item The \emph{second-order adjacent set} is
      \[
          T^{\flat 2}(M,\bar x,u)=\Liminf_{t\downarrow 0}\frac{M-\bar x-tu}{t^{2}}.
      \]
\item The \emph{second-order asymptotic contingent set} is
      \[
          T''(M,\bar x,u)=\Limsup_{(t,r)\downarrow(0,0),\,t/r\to 0}\frac{M-\bar x-tu}{tr}.
      \]
\end{enumerate}
\(M\) is \emph{second-order regular} at \(\bar x\) in direction
\(u\) if \(T^{2}(M,\bar x,u)=T^{\flat 2}(M,\bar x,u)\).
Convex polyhedral sets are second-order regular, and in that case
\(T^{2}(M,\bar x,u)=T(T(M,\bar x),u)\)
\cite[Prop.~3.34]{BonnansShapiro2000}.
\end{definition}

Dini directional derivatives for set-valued mappings are defined
next. The first-order form is classical
\cite[Ch.~5]{AubinFrankowska1990},~\cite[Ch.~8]{RockafellarWets1998}, used
in \cite[Sect.~2]{Luc2018SIAM}; the second-order form follows
\cite{Bao2025JOTA,LiSunZhai2012,KhanhTung2015}.

\begin{definition}{Dini directional derivatives}{Dini}
Let \(F:\R^{m}\sm\R^{n}\), \((\bar x,\bar y)\in\gph F\) and
\((u,v)\in\R^{m}\times\R^{n}\).
\begin{enumerate}[label=(\roman*)]
\item The \emph{first-order upper and lower Dini derivatives} of \(F\) at
      \((\bar x,\bar y)\) in direction \(u\) are
      \[
          DF(\bar x,\bar y)(u)
          =\Limsup_{t\downarrow 0}\frac{F(\bar x+tu)-\bar y}{t},\quad
          D_{\ell}F(\bar x,\bar y)(u)
          =\Liminf_{t\downarrow 0}\frac{F(\bar x+tu)-\bar y}{t}.
      \]
      \(F\) is \emph{semi-derivable} at \((\bar x,\bar y)\) in direction
      \(u\) if \(DF(\bar x,\bar y)(u)=D_{\ell}F(\bar x,\bar y)(u)\).
\item The \emph{second-order upper and lower Dini derivatives} of \(F\) at
      \((\bar x,\bar y)\) in direction \((u,v)\) are
      \begin{align*}
          \DD F(\bar x,\bar y,v)(u)
          &=\Limsup_{t\downarrow 0}\frac{F(\bar x+tu)-\bar y-tv}{t^{2}},\\
          \DD_{\ell} F(\bar x,\bar y,v)(u)
          &=\Liminf_{t\downarrow 0}\frac{F(\bar x+tu)-\bar y-tv}{t^{2}}.
      \end{align*}
      \(F\) is \emph{second-order semi-derivable} at \((\bar x,\bar y)\) in
      direction \((u,v)\) if \(\DD F=\DD_{\ell}F\) at that point and
      direction.
\end{enumerate}
\end{definition}

For a differentiable single-valued map \(f:\R^{m}\to\R^{n}\), the
second-order upper Dini derivative collapses to the second-order radial
derivative
\[
    \DD f(\bar x)(u)=\Limsup_{t\downarrow 0}\frac{f(\bar x+tu)-f(\bar x)-t\nabla f(\bar x)u}{t^{2}}.
\]
When \(f\) is second-order semi-derivable at \(\bar x\) in direction
\(u\), this set reduces to a single element of \(\R^{n}\), denoted by
\[
    d^{2}f(\bar x)(u)
    :=\lim_{t\downarrow 0}\frac{f(\bar x+tu)-f(\bar x)-t\nabla f(\bar x)u}{t^{2}}.
\]
In particular, when \(f\) is twice differentiable at \(\bar x\),
\[
 d^{2}f(\bar x)(u)
 =\tfrac12\,\nabla^2_{\!\mathrm{Hess}}f(\bar x)[u,u],
\]
where \(\nabla^2_{\!\mathrm{Hess}}f\) denotes the standard Hessian
bilinear form. See \cite[Rem.~2.2]{Bao2025JOTA}.

\begin{remark}{Second-order scaling convention}{2nd-order-conv}
Second-order expansions in this paper are normalized by \(t^2\), not by
\(\tfrac12t^2\); see Definitions~\ref{def:2ndtan} and~\ref{def:Dini}.
The symbol \(d^2f(\bar x)(u)\) always denotes the resulting Taylor
coefficient:
\[
 f(\bar x+tu)
 =f(\bar x)+t\nabla f(\bar x)u+t^2d^2f(\bar x)(u)+o(t^2).
\]
If \(f\) is \(C^2\), then
\[
 d^2f(\bar x)(u)
 =\tfrac12\,\nabla^2_{\!\mathrm{Hess}}f(\bar x)[u,u].
\]
Thus \(\nabla^2_{\!\mathrm{Hess}}f\) is reserved for the standard Hessian,
whereas \(d^2f\) denotes the coefficient compatible with the Dini
derivatives used here.
\end{remark}

\subsection{Efficiency and stability notions}\label{subsec:regularity}
The recalled results below use standard pseudo-Lipschitz, directional
calmness, and directional compactness notions. We follow
\cite[Sect.~2]{Luc2018SIAM} and
\cite[Defs.~2.1--2.3]{Bao2025JOTA}; see also
\cite[Chs.~3--4]{DontchevRockafellar2009}.

The four solution notions below are used throughout the paper.
Efficient and weakly efficient points are the classical Pareto-type
notions of \cite[Ch.~4]{Jahn2011} and \cite[Ch.~2]{Luc1989}. Henig efficiency
\cite{Henig1982} sharpens them via enlarged cones, with the uniform
variant of \cite[Def.~3.1]{Bao2025JOTA},
\cite[Def.~1]{Luc2018SIAM}, \cite{BorweinZhuang1993} encoding robustness
under local perturbations.

\begin{definition}{Efficient, weakly efficient, Henig and uniformly Henig efficient points}{henig}
Let \((\bar x,\bar y)\in\gph F\).
\begin{enumerate}[label=(\roman*)]
\item \(\bar y\) is an \emph{efficient point} of \(F(\bar x)\) if
      \((F(\bar x)-\bar y)\cap(-C\setminus\{0\})=\emptyset\).
\item \(\bar y\) is a \emph{weakly efficient point} of \(F(\bar x)\) if
      \((F(\bar x)-\bar y)\cap(-\Int C)=\emptyset\).
\item For a convex cone \(K\subset\R^{n}\) dilating
      \(C\) (meaning \(C\setminus\{0\}\subset\Int K\)),
      \(\bar y\) is a \emph{Henig \(K\)-efficient point} of
      \(F(\bar x)\) if \((F(\bar x)-\bar y)\cap(-K)=\{0\}\).
\item \(\bar y\) is a \emph{uniformly Henig efficient point} of \(F\) at
      \(\bar x\) if
{\(\bar y\in\Min_{C}F(\bar x)\) and}
      there exist \(K\) dilating \(C\) and \(\delta>0\) such
      that, for all \((x,y)\in B^{m+n}((\bar x,\bar y),\delta)\) with
      \(y\in\Min_{C}F(x)\), \(y\) is a Henig \(K\)-efficient point of
      \(F(x)\). \textup{(}Equivalently, the condition holds on
      \(\gph(\Min_{C}F)\cap B^{m+n}((\bar x,\bar y),\delta)\). Cf.\
      {\cite[Def.~1]{Luc2018SIAM} and}
      \cite[Def.~3.1]{Bao2025JOTA}.\textup{)}
\end{enumerate}
\end{definition}

The uniformity in (iv) is what enables second-order use: a
single dilating cone \(K\) Henig-separates efficient values for all
\((x,y)\) near the base point, so the separation does not degrade at the
\(o(t^{2})\) scale of the second-order expansions.

The next definition is the mechanism used by the inner estimate of
Section~\ref{sec:effsol} to lift value-level second-order information to
the decision level.

\begin{definition}{Value-to-decision error bound (VDB)}{vdb}
Let \(\bar x\in S(\bar p)\) and \(\bar y=f(\bar p,\bar x)\in\Phi(\bar p)\).
We say that \((\bar p,\bar x)\) satisfies the \emph{value-to-decision
error bound} if there exist neighborhoods \(V\) of \(\bar p\), \(W\) of
\(\bar x\), and a constant \(\kappa>0\) such that
\begin{equation}\label{eq:VDB}
    \dist\bigl(x,S(p)\cap W\bigr)
    \leqslant \kappa\,\dist\bigl(f(p,x),\Phi(p)\bigr)
    \qquad\forall p\in V,\;x\in H(p)\cap W.
\end{equation}
We also refer to \eqref{eq:VDB} as \textup{(VDB)}.
\end{definition}

\begin{remark}{Pointwise identification versus set-valued recovery}{VDB-vs-pointwise}
Earlier decision formulas may recover a prescribed decision direction
through pointwise identification. Corollary~22 of \cite{Luc2018SIAM}
assumes a strict efficient base solution and injectivity of \(\nabla_x f\)
on the relevant derivative set, while Theorem~3.1 of \cite{Tung2018}
assumes strict monotonicity of \(\nabla_x f(\bar p,\bar x)(\cdot)\).
By contrast, \textup{(VDB)} bounds the distance to the entire localized
set \(S(p)\cap W\) by the residual to \(\Phi(p)\); it need not identify a
unique efficient decision.
\end{remark}

\begin{remark}{Fiberwise reading of \textup{(VDB)} and its rate}{VDB-reading}
Define the feasible residual
\[
 \Psi_H(p,x):=
 \begin{cases}
   f(p,x)-\Phi(p),&x\in H(p),\\
   \emptyset,&x\notin H(p).
 \end{cases}
\]
Then \(\gph S=\{(p,x):0\in\Psi_H(p,x)\}\), and, for \(x\in H(p)\),
\[
 \dist(0,\Psi_H(p,x))=\dist(f(p,x),\Phi(p)).
\]
Thus \textup{(VDB)} is metric subregularity of \(\Psi_H\) in the
decision variable, locally uniformly in the parameter. This differs from
Aubin-type stability, which controls the variation of \(S(p)\) as \(p\)
changes; no implication between the two properties is used here.

The linear rate has the exact order required by the second-order Dini
analysis. It converts an \(o(t^{2})\) value residual into an
\(o(t^{2})\) decision correction. By contrast, an
exponent-\(\tfrac12\) bound yields only an \(o(t)\) correction.
\end{remark}

\begin{proposition}{Uniform sharp scalarization implies \textup{(VDB)}}{VDB-sharp}
Let \(\bar x\in S(\bar p)\) and \(\bar y=f(\bar p,\bar x)\). Suppose there
exist neighborhoods \(V\) of \(\bar p\) and \(W\) of \(\bar x\), a vector
\(\lambda\in C^*\setminus\{0\}\), a function \(m:V\to\R\), and
\(\alpha>0\) such that, for every \(p\in V\), \(\Phi(p)\ne\emptyset\),
\begin{equation}\label{eq:VDB-common-level}
 \langle\lambda,y\rangle=m(p)
 \qquad\forall y\in\Phi(p),
\end{equation}
and
\begin{equation}\label{eq:VDB-sharp}
 \alpha\,\dist\bigl(x,S(p)\cap W\bigr)
 \leqslant \langle\lambda,f(p,x)\rangle-m(p)
 \qquad\forall p\in V,\quad x\in H(p)\cap W.
\end{equation}
Then \textup{(VDB)} holds at \((\bar p,\bar x)\) with
\(\kappa=\|\lambda\|/\alpha\).
\end{proposition}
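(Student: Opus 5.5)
The plan is to bound the right-hand side of \eqref{eq:VDB-sharp} by \(\|\lambda\|\,\dist(f(p,x),\Phi(p))\), which yields \eqref{eq:VDB} at once. Fix \(p\in V\) and \(x\in H(p)\cap W\), and set \(z:=f(p,x)\). Since \(\Phi(p)\ne\emptyset\) by hypothesis, \(\dist(z,\Phi(p))\) is a finite number. If instead it were \(+\infty\), the inequality would hold trivially under the convention of Table~\ref{tab:notation}.

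The key step is a one-line estimate against any efficient value. Take an arbitrary \(y\in\Phi(p)\). By the common-level condition \eqref{eq:VDB-common-level} we have \(m(p)=\langle\lambda,y\rangle\), and hence
\[
 \langle\lambda,z\rangle-m(p)=\langle\lambda,z-y\rangle\leqslant\|\lambda\|\,\|z-y\|
\]
by Cauchy--Schwarz. Taking the infimum over \(y\in\Phi(p)\) gives
\[
 \langle\lambda,f(p,x)\rangle-m(p)\leqslant\|\lambda\|\,\dist\bigl(f(p,x),\Phi(p)\bigr).
\]
We now combine this with \eqref{eq:VDB-sharp} and divide by \(\alpha>0\). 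This gives \(\dist(x,S(p)\cap W)\leqslant(\|\lambda\|/\alpha)\,\dist(f(p,x),\Phi(p))\) for all \(p\in V\) and \(x\in H(p)\cap W\). That is exactly \eqref{eq:VDB} with \(\kappa=\|\lambda\|/\alpha\), and \(\kappa>0\) because \(\lambda\ne0\).

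There is no real obstacle here. The only point needing care is that \eqref{eq:VDB-common-level} must hold for \emph{every} \(y\in\Phi(p)\), not just some, so that the infimum over \(\Phi(p)\) can be taken. Nonemptiness of \(\Phi(p)\) is what keeps both sides finite. Note that \(\lambda\in C^*\) is not actually needed for this inequality. It only makes the hypotheses natural, since it forces \(\langle\lambda,f(p,x)\rangle-m(p)\geqslant0\) for dominated points.
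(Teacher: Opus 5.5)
Your proposal is correct and follows essentially the paper's argument: use the common level \eqref{eq:VDB-common-level} to write \(\langle\lambda,f(p,x)\rangle-m(p)=\langle\lambda,f(p,x)-y\rangle\) for \(y\in\Phi(p)\), apply Cauchy--Schwarz, and combine with \eqref{eq:VDB-sharp}. The only difference is that you take the infimum over \(y\in\Phi(p)\) directly, whereas the paper picks an \(\varepsilon\)-near point \(y_\varepsilon\) and lets \(\varepsilon\downarrow0\); the two steps are equivalent.
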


\begin{proof}
Fix \(p\in V\), \(x\in H(p)\cap W\), and \(\varepsilon>0\). Choose
\(y_{\varepsilon}\in\Phi(p)\) with
\(\|f(p,x)-y_{\varepsilon}\|\leqslant
\dist(f(p,x),\Phi(p))+\varepsilon\). By \eqref{eq:VDB-sharp} and
\eqref{eq:VDB-common-level}, followed by Cauchy--Schwarz,
\[
 \alpha\dist(x,S(p)\cap W)
 \leqslant\langle\lambda,f(p,x)\rangle-m(p)
 =\langle\lambda,f(p,x)-y_{\varepsilon}\rangle
 \leqslant\|\lambda\|\bigl(\dist(f(p,x),\Phi(p))+\varepsilon\bigr).
\]
Letting \(\varepsilon\downarrow0\) gives \eqref{eq:VDB}.
\end{proof}

\begin{remark}{Sharp scalarization and quadratic-growth insufficiency}{VDB-sharp-and-quadratic}
\label{rem:quadgrowth}
Conditions~\eqref{eq:VDB-common-level}--\eqref{eq:VDB-sharp}
give a uniform weak-sharpness estimate and imply \textup{(VDB)} with
\(\kappa=\|\lambda\|/\alpha\). They can be checked directly when the
scalar gap is explicit, as in Example~\ref{ex:twobranch-VDB}.

Quadratic growth alone is insufficient. In the scalar case, an estimate
\[
 \dist(f(p,x),\Phi(p))
 \geqslant \frac{\sigma}{2}\dist(x,S(p)\cap W)^2
\]
gives only
\[
 \dist(x,S(p)\cap W)
 \leqslant\sqrt{2/\sigma}\,
 \dist(f(p,x),\Phi(p))^{1/2},
\]
not the linear bound \textup{(VDB)}. Thus a smooth minimizer with only
quadratic growth need not satisfy \textup{(VDB)}.
\end{remark}

The level-boundedness condition below is invoked by the
structured hypothesis \((A_1)\) in Subsection~\ref{subsec:sec4-setup} and
by the recalled first-order baseline. See
\cite[Eq.~\textup{(}20\textup{)}]{Luc2018SIAM} and
\cite[Sect.~4]{Bao2025JOTA}.

\begin{definition}{{Locally bounded level sets}}{lbls}
{Let \(P=\R^{s}\), \(X=\R^{m}\), and \(Y=\R^{n}\). Let
\(f:P\times X\to Y\) be a parametric objective and let
\(H:P\sm X\) be a feasible-decision map. We say that \(f\) has
\emph{locally bounded level sets} at \((\bar p,\bar y)\in P\times Y\)
relative to \(H\) if there exist \(\varepsilon>0\) and \(\alpha>0\) such
that}
\begin{equation}\label{eq:lbls}
    {
    \{x\in H(p)\,:\,f(p,x)\in y-C\}\subset\alpha\cl B^{m}
    \qquad\forall(p,y)\in B^{s+n}((\bar p,\bar y),\varepsilon).}
\end{equation}
{It is enough for \(H\) to be locally bounded around
\(\bar p\).}
\end{definition}

\subsection{Prior sensitivity results and relation to the present work}\label{subsec:prior}

Two prior sensitivity lines are directly relevant here. Luc,
Soleimani-Damaneh, and Zamani~\cite[Thm.~18, Cor.~22]{Luc2018SIAM}
establish first-order semi-derivability formulas for the marginal map
\(\Phi\) and, under additional strict-efficiency and injectivity conditions,
the efficient solution map \(S\). Bao, Khanh, and Tung
\cite[Thms.~4.1--4.3]{Bao2025JOTA} give first- and second-order Dini formulas
for the feasible image \(\mathcal F\) and the marginal map \(\Phi\) in the
abstract inclusion model \(x\in H(p)\). These results provide comparison
baselines, and the Bao formulas also give an alternative abstract route to the
value-level derivative used later. The formulas developed in
Sections~\ref{sec:FPhi-struct}--\ref{sec:paramsys} are proved directly under
the hypothesis packages stated there.

For \((\bar p,\bar y)\in\gph\mathcal F\), \(\bar x\in H(\bar p)\), and
\((p,v)\in P\times Y\), we use the branch sets
\begin{align}
    \Delta_{0}(\bar p,\bar y)
        &:= \{x\in H(\bar p)\,:\,f(\bar p,x)=\bar y\},\label{eq:Delta0}\\
    \Delta_{1}(\bar p,\bar x,p,v)
        &:= \{u\in DH(\bar p,\bar x)(p)\,:\,\nabla f(\bar p,\bar x)(p,u)=v\}.\label{eq:Delta1}
\end{align}

\begin{table}[ht]
\centering
\footnotesize
\caption{Prior sensitivity results recalled for comparison, with the
value-to-decision recovery mechanism used in each.}
\label{tab:prior}
\begin{tabular}{@{}>{\raggedright\arraybackslash}p{0.16\linewidth}>{\raggedright\arraybackslash}p{0.18\linewidth}>{\raggedright\arraybackslash}p{0.18\linewidth}>{\raggedright\arraybackslash}p{0.36\linewidth}@{}}
\toprule
Source & Object & Order / derivative & Value-to-decision recovery \\
\midrule
Luc et al.\ \cite[Cor.~22]{Luc2018SIAM}
& \(S\) & first-order semi-derivative
& Strict efficient base solution, hence a singleton efficient-value fiber,
  and injectivity of \(\nabla_x f\) on the relevant derivative set. \\
Tung~\cite[Thm.~3.1]{Tung2018}
& \(S\) & second-order proto-derivative
& \textbf{Strict monotonicity of \(\nabla_x f(\bar p,\bar x)(\cdot)\).} \\
Pham~\cite{Pham2022,Pham2023} and
Pham--Nguyen~\cite{PhamNguyen2024}
& proper solution / efficient-point maps
& generalized proto-, index-\(\gamma\) semi-, and contingent derivatives
& Related proper-efficiency and efficient-point variants, with
  hypothesis packages adapted to their settings. \\
Bao et al.\ \cite[Thms.~4.1--4.3]{Bao2025JOTA}
& \(\mathcal F,\Phi\) & first- and second-order Dini derivatives
& Value level only; no decision recovery. \\
Present paper & \(S\) & first- and second-order fixed-ray Dini derivatives
& \textbf{\textup{(VDB)}: set-valued error bound.} \\
\bottomrule
\end{tabular}
\end{table}

The marginal map records efficient values, whereas the solution map records
decisions. Corollary~22 of \cite{Luc2018SIAM} therefore imposes additional
strict-efficiency and injectivity conditions when passing from \(\Phi\) to
\(S\). This first-order stability requirement motivates the \((VDB)\)
condition introduced in Definition~\ref{def:vdb}.

\section[Second-order theory of H and Phi in the structured setting]{\texorpdfstring{{Second-order theory of \(H\) and \(\Phi\) in the structured setting}}{Second-order theory of H and Phi in the structured setting}}%
\label{sec:FPhi-struct}

\subsection{Standing setup and hypothesis packages}\label{subsec:sec4-setup}

This section derives structured second-order formulas for the feasible map
\begin{equation}\label{eq:Hstruct-def}
    H(p) = \{x\in \Omega\,:\,g(p,x)\in D\},
\end{equation}
where \(\Omega\subset\R^{m}\) and \(D\subset\R^{q}\) are
closed. Throughout this section, \(f\) and \(g\) are continuous.
Proposition~\ref{prop:DDH-str} derives \(\DD H\) from this
constraint representation, and Theorem~\ref{thm:DDPhi-str} gives the
corresponding formula for \(\DD\Phi\).

The value-level route in \cite[Thms.~4.2--4.3]{Bao2025JOTA} uses its
abstract compactness hypotheses. Here we prove the structured formula directly
under \((A_1)\) and the objective-aware conditions \((A_4)\) and \((A_6)\) below.
These conditions test only feasible directions whose objective images are
dominated by the cone \(K\). They therefore need not imply local isolation of
efficient preimages and may hold on positive-dimensional efficient faces. See
Remark~\ref{rem:B1B2-reading}. Under \((A_3)\), \((A_4)\), and \((A_6)\),
Theorem~\ref{thm:DDPhi-str} gives the second-order semi-derivative of
\(\Phi\), which is the value-level input used later for the efficient solution
map \(S\).

We use the Robinson-type error bound below
\cite{Robinson1976,Robinson1979}; see also
\cite{ChieuYaoYen2010,DontchevRockafellar2009}.

\begin{definition}{Robinson metric regularity}{RMR}
Let \((\bar p,\bar x)\in\gph H\). \(H\) is \emph{Robinson
metrically regular along \(\Omega\) at \((\bar p,\bar x)\)} if there exist
neighborhoods \(U_{1}\) of \(\bar p\), \(U_{2}\) of \(\bar x\), and
constants \(\alpha>0\), \(\gamma>0\) such that
\[
    \dist(x,H(p))\leqslant\alpha\,\dist(g(p,x),D)
    \qquad\forall p\in U_{1},\;x\in \Omega\cap U_{2},\;\dist(g(p,x),D)<\gamma.
\]
\end{definition}

The definitions and conditions below form the hypothesis
packages used in the value-level results. Fix a base pair
\((\bar p,\bar y)\), a direction \((p,v)\), and a cone \(K\) dilating
\(C\). When \(\bar y\) is uniformly Henig efficient, \(K\) is a
witnessing cone from Definition~\ref{def:henig}\textup{(iv)}; the
witness need not be unique. The next proposition gives one useful
construction, without asserting that every uniformly Henig efficient
value admits a common scalarizer.

\begin{proposition}{A scalarized family of dilating cones}{canonical-K}
Let \(\lambda\in\Int C^*\), and set
\[
 m_C(\lambda):=\min\{\langle\lambda,z\rangle:z\in C,\ \|z\|=1\}>0.
\]
For \(0<\varepsilon<m_C(\lambda)\), define
\[
 K_{\lambda,\varepsilon}
 :=\{z\in Y:\langle\lambda,z\rangle\geqslant
                  \varepsilon\|z\|\}.
\]
Then \(K_{\lambda,\varepsilon}\) is a closed pointed convex cone and
\(C\setminus\{0\}\subset\Int K_{\lambda,\varepsilon}\). Moreover, let
\(G\subset\gph(\Min_C F)\) be such that
\[
 \langle\lambda,w-y\rangle\geqslant0
 \qquad\forall (q,y)\in G,\quad\forall w\in F(q).
\]
Then every \((q,y)\in G\) is Henig
\(K_{\lambda,\varepsilon}\)-efficient. In particular, if this common
scalarization condition holds on
\(\gph(\Min_C F)\cap B((\bar q,\bar y),\delta)\), the same cone witnesses
uniform Henig efficiency at \((\bar q,\bar y)\).
\end{proposition}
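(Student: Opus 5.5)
The plan is to verify the claimed properties of \(K_{\lambda,\varepsilon}\) directly, then the Henig efficiency, and finally the uniform statement.

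\emph{Step 1: \(m_C(\lambda)>0\) and the cone structure.} The set \(\{z\in C:\|z\|=1\}\) is compact and nonempty because \(C\ne\{0\}\), so the minimum is attained at some \(z_0\). Since \(\lambda\in\Int C^*\), some ball \(\lambda+\rho\cl B\) lies in \(C^*\). Applying this to \(\lambda-\rho z_0\) gives \(\langle\lambda,z_0\rangle\geqslant\rho>0\).

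Next, \(K_{\lambda,\varepsilon}\) is closed because \(z\mapsto\langle\lambda,z\rangle-\varepsilon\|z\|\) is continuous, and it is positively homogeneous. It is convex because the defining function is concave, so its superlevel set at \(0\) is convex.

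For pointedness, suppose \(z\) and \(-z\) both lie in \(K_{\lambda,\varepsilon}\). Adding the two inequalities gives \(0\geqslant2\varepsilon\|z\|\), so \(z=0\).

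\emph{Step 2: dilation.} Take \(z\in C\setminus\{0\}\). By Step 1, \(\langle\lambda,z\rangle\geqslant m_C(\lambda)\|z\|>\varepsilon\|z\|\). Strict inequality for a continuous function is stable under small perturbations, so a small ball around \(z\) stays in \(K_{\lambda,\varepsilon}\). Hence \(z\in\Int K_{\lambda,\varepsilon}\).

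\emph{Step 3: Henig efficiency.} Let \((q,y)\in G\) and \(w\in F(q)\), and suppose \(w-y\in-K_{\lambda,\varepsilon}\). Setting \(z=y-w\in K_{\lambda,\varepsilon}\), we get
\[
\varepsilon\|z\|\leqslant\langle\lambda,z\rangle=-\langle\lambda,w-y\rangle\leqslant0,
\]
where the last inequality is the common scalarization hypothesis. Hence \(z=0\), i.e.\ \(w=y\). Since \(y\in F(q)\) (it is a minimal point of \(F(q)\)), \(0\) belongs to \((F(q)-y)\cap(-K_{\lambda,\varepsilon})\), and we have shown it is the only element. Thus the intersection equals \(\{0\}\).

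\emph{Step 4: uniformity.} Take \(G=\gph(\Min_CF)\cap B((\bar q,\bar y),\delta)\). Then Step 3 yields exactly Definition~\ref{def:henig}(iv) with the single cone \(K=K_{\lambda,\varepsilon}\), using the fact that \(\bar y\in\Min_CF(\bar q)\) is included in the hypothesis.

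There is no real obstacle. The only point needing care is Step 1, showing \(m_C(\lambda)>0\); it relies on \(\lambda\) lying in the interior of \(C^*\), not merely in \(C^*\).
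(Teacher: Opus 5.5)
Your proposal is correct and follows the paper's proof essentially step for step. The shared steps are: compactness of \(C\cap\{\|z\|=1\}\) plus \(\lambda\in\Int C^*\) for \(m_C(\lambda)>0\); concavity and positive homogeneity of \(z\mapsto\langle\lambda,z\rangle-\varepsilon\|z\|\) for the closed convex cone; adding the two inequalities for pointedness; the strict inequality \(\langle\lambda,z\rangle>\varepsilon\|z\|\) on \(C\setminus\{0\}\) for interiority; and \(\varepsilon\|y-w\|\leqslant\langle\lambda,y-w\rangle\leqslant0\) for Henig efficiency. You only spell out details the paper leaves implicit, such as the ball \(\lambda+\rho\cl B\subset C^*\) giving \(\langle\lambda,z_0\rangle\geqslant\rho\), and the minimality of \(\bar y\) in Step 4 is presupposed by the word ``witnesses'' rather than stated as a hypothesis, just as in the paper's one-line appeal to Definition~\ref{def:henig}(iv).
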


\begin{proof}
See Appendix~\ref{app:proofs} (proof of Proposition~\ref{prop:canonical-K}).
\end{proof}

A smaller witnessing cone \(K\) makes
\((A_4)\) and \((A_6)\) no harder to satisfy, and the right-hand sides of
\eqref{eq:DDPhi-str} and \eqref{eq:DDS-struct} do not depend on \(K\).

The hypothesis packages used below are listed atomically in
Table~\ref{tab:sec3-hypotheses}.

\begin{table}[!ht]
\centering
\footnotesize
\caption{Standing hypothesis packages used in
Section~\ref{sec:FPhi-struct}.}
\label{tab:sec3-hypotheses}
\begin{tabular}{@{}l@{\qquad}>{\raggedright\arraybackslash}p{0.82\linewidth}@{}}
\toprule
Tag & Statement \\
\midrule
\((A_1)\) &
The objective \(f\) has locally bounded level sets at
\((\bar p,\bar y)\) relative to \(H\), in the sense of
Definition~\ref{def:lbls}. \\
\((A_2)\) &
For every \(x\in\Delta_0(\bar p,\bar y)\), \(f\) and \(g\) are
continuously differentiable near \((\bar p,x)\), and \(H\) is
semi-derivable at \((\bar p,x)\) in direction \(p\). \\
\((A_3)\) &
For every \(x\in\Delta_0(\bar p,\bar y)\), \(H\) is Robinson metrically
regular along \(\Omega\) at \((\bar p,x)\). \\
\((A_4)\) &
For every \(x\in\Delta_0(\bar p,\bar y)\),
\(\{\tilde u\in T(\Omega,x):
\nabla_xg(\bar p,x)\tilde u\in T(D,g(\bar p,x)),\
\nabla_xf(\bar p,x)\tilde u\in-K\}=\{0\}\). \\
\((A_5)\) &
For every \(x\in\Delta_0(\bar p,\bar y)\) and every
\(u\in\Delta_1(\bar p,x,p,v)\), \(f\) and \(g\) are second-order
semi-derivable at \((\bar p,x)\) in direction \((p,u)\), \(\Omega\) is
second-order regular at \(x\) in direction \(u\), and \(D\) is
second-order regular at \(g(\bar p,x)\) in direction
\(\nabla g(\bar p,x)(p,u)\). \\
\((A_6)\) &
For every \(x\in\Delta_0(\bar p,\bar y)\) and every
\(u\in\Delta_1(\bar p,x,p,v)\),
\(\{\tilde x\in T''(\Omega,x,u):
\nabla_xg(\bar p,x)\tilde x\in
T''(D,g(\bar p,x),\nabla g(\bar p,x)(p,u)),\
\nabla_xf(\bar p,x)\tilde x\in-K\}=\{0\}\). \\
\bottomrule
\end{tabular}
\end{table}
Condition \((A_1)\) is the local level-set hypothesis used in the
compactness arguments below. Proposition~\ref{prop:DDH-str} uses
\((A_3)\) together with second-order regularity of \(\Omega\) and \(D\).
Theorems~\ref{thm:DDPhi-str} and~\ref{thm:DDS-struct} use \((A_1)\),
\((A_2)\), \((A_3)\), \((A_4)\), \((A_5)\), and \((A_6)\); see
Table~\ref{tab:sec3-hypotheses}.
Their objective-aware reading is discussed in
Remark~\ref{rem:B1B2-reading}.

\subsection[Second-order semi-derivative of H]{\texorpdfstring{{Second-order semi-derivative of \(H\)}}{Second-order semi-derivative of H}}\label{subsec:DDH-struct}

\begin{proposition}{Second-order semi-derivative of \(H\) in the structured setting}{DDH-str}
Let \((\bar p,\bar x)\in\gph H\) and \((p,u)\in P\times\R^{m}\) satisfy
the \emph{first-order compatibility conditions}
\begin{equation}\label{eq:DDH-str-fo-compat}
    u\in T(\Omega,\bar x)
    \qquad\text{and}\qquad
    \nabla g(\bar p,\bar x)(p,u)\in T(D,g(\bar p,\bar x)).
\end{equation}
Assume that
\textup{(i)} \(\Omega\) is second-order regular at \(\bar x\) in
direction \(u\), \textup{(ii)} \(D\) is second-order regular at
\(g(\bar p,\bar x)\) in direction \(\nabla g(\bar p,\bar x)(p,u)\),
\textup{(iii)} \(g\) is \(C^{1}\) and second-order semi-derivable
at \((\bar p,\bar x)\) in direction \((p,u)\), and
\textup{(iv)} \(H\) is Robinson metrically regular along \(\Omega\)
at \((\bar p,\bar x)\). Then \(H\) is second-order semi-derivable at
\((\bar p,\bar x)\) in direction \((p,u)\), and
\begin{equation}\label{eq:DDH-str}
    \DD H(\bar p,\bar x,u)(p)
    = \left\{x\in T^{2}(\Omega,\bar x,u)\,:\,
       \begin{array}{l}
          \nabla_{x}g(\bar p,\bar x)x + d^{2}g(\bar p,\bar x)(p,u)\\
          \quad\in T^{2}(D,g(\bar p,\bar x),\nabla g(\bar p,\bar x)(p,u))
       \end{array}\right\}.
\end{equation}
\end{proposition}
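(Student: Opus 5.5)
Write \(\mathcal R\) for the right-hand side of \eqref{eq:DDH-str}, \(\bar z:=g(\bar p,\bar x)\) and \(w:=\nabla g(\bar p,\bar x)(p,u)\). Since \(\DD_\ell H\subset\DD H\) always holds, the plan is to prove the sandwich
\[
\DD H(\bar p,\bar x,u)(p)\subset\mathcal R\subset\DD_\ell H(\bar p,\bar x,u)(p).
\]
This gives both the formula and second-order semi-derivability. In the second derivative of \(H\) the parameter moves along \(\bar p+tp\) and the decision is expanded as \(\bar x+tu+t^2x\). The basic Taylor identity, from (iii) and Remark~\ref{rem:2nd-order-conv}, is
\[
g(\bar p+tp,\bar x+tu+t^2x_t)=\bar z+tw+t^2\bigl(\nabla_xg(\bar p,\bar x)x_t+d^2g(\bar p,\bar x)(p,u)\bigr)+o(t^2),
\]
valid whenever \(x_t\) stays bounded. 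To justify it, split the left side as \(g(\bar p+tp,\bar x+tu)\), which is expanded by second-order semi-derivability in direction \((p,u)\), plus the difference \(g(\bar p+tp,\bar x+tu+t^2x_t)-g(\bar p+tp,\bar x+tu)\). By the mean value theorem and continuity of \(\nabla g\) (since \(g\) is \(C^1\)), this difference equals \(t^2\nabla_xg(\bar p,\bar x)x_t+o(t^2)\).

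\textbf{Outer inclusion.} Take \(x\in\DD H(\bar p,\bar x,u)(p)\). Then there are \(t_k\downarrow0\) and \(x_k\to x\) with \(\bar x+t_ku+t_k^2x_k\in H(\bar p+t_kp)\). Membership in \(\Omega\) directly gives \(x\in T^2(\Omega,\bar x,u)\). Membership \(g(\cdot)\in D\), combined with the Taylor identity, shows that
\[
\bigl(g(\bar p+t_kp,\bar x+t_ku+t_k^2x_k)-\bar z-t_kw\bigr)/t_k^2\in(D-\bar z-t_kw)/t_k^2
\]
and that this quotient converges to \(\nabla_xg\,x+d^2g(\bar p,\bar x)(p,u)\). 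Hence the limit lies in \(T^2(D,\bar z,w)\). This direction needs neither regularity nor metric regularity.

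\textbf{Inner inclusion.} This is the main step. Take \(x\in\mathcal R\) and an arbitrary \(t_k\downarrow0\).
\begin{enumerate}[label=(\alph*)]
\item By second-order regularity (i), \(x\in T^{\flat2}(\Omega,\bar x,u)\), so there are \(x_k\to x\) with \(\xi_k:=\bar x+t_ku+t_k^2x_k\in\Omega\).
\item By (ii), the vector \(\nabla_xg\,x+d^2g(\bar p,\bar x)(p,u)\) lies in \(T^{\flat2}(D,\bar z,w)\). So there are \(z_k\) converging to it with \(\bar z+t_kw+t_k^2z_k\in D\).
\item The Taylor identity then yields
\[
\dist\bigl(g(\bar p+t_kp,\xi_k),D\bigr)\le\bigl\|g(\bar p+t_kp,\xi_k)-\bar z-t_kw-t_k^2z_k\bigr\|=o(t_k^2).
\]
\item For large \(k\) we have \(\bar p+t_kp\in U_1\), \(\xi_k\in\Omega\cap U_2\), and the distance is below \(\gamma\). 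Robinson metric regularity (iv) then gives \(\eta_k\in H(\bar p+t_kp)\) with \(\|\eta_k-\xi_k\|\le\alpha\,o(t_k^2)+t_k^3\).
\item Set \(\tilde x_k:=(\eta_k-\bar x-t_ku)/t_k^2\). Then \(\tilde x_k-x_k\to0\), so \(\tilde x_k\to x\). Since the sequence \(t_k\) was arbitrary, \(x\in\DD_\ell H(\bar p,\bar x,u)(p)\).
\end{enumerate}

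The main obstacle is this inner inclusion, specifically making the residual in step (c) genuinely \(o(t^2)\) uniformly along the chosen sequences. This is where both regularity assumptions are needed: they allow the inner-limit (adjacent) selections in \(\Omega\) and in \(D\) to be taken along the same arbitrary sequence \(t_k\). It is also where the linear rate of the Robinson bound matters. The compatibility conditions \eqref{eq:DDH-str-fo-compat} serve only to make the second-order tangent sets meaningful; if either is empty, both sides are empty and the formula holds trivially.
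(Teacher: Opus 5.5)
Your proposal is correct and follows essentially the same route as the paper. It uses the same expansion of \(g\) (mean-value theorem in \(x\) plus second-order semi-derivability in direction \((p,u)\)), the direct outer inclusion, and an inner inclusion built from adjacent selections in \(\Omega\) and \(D\) along an arbitrary sequence, followed by the Robinson bound with a \(t_k^3\) slack to conclude membership in \(\DD_\ell H\). Your explicit check that the residual eventually falls below the Robinson threshold \(\gamma\) is a small detail the paper leaves implicit.
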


\begin{proof}
Let \(\mathcal R\) denote the right-hand side of
\eqref{eq:DDH-str}. For \(x\in\R^m\), set
\[
    z := \nabla_{x}g(\bar p,\bar x)x + d^{2}g(\bar p,\bar x)(p,u).
\]
We first record the expansion of \(g\) used in both
inclusions. Let \(t_{k}\downarrow 0\) and let \(x_{k}\to x\).
The \(C^{1}\) mean-value theorem in the second argument gives
\begin{equation}\label{eq:g-mvt}
    g(\bar p+t_{k}p,\bar x+t_{k}u+t_{k}^{2}x_{k})
    = g(\bar p+t_{k}p,\bar x+t_{k}u)
      + t_{k}^{2}\nabla_{x}g(\bar p+t_{k}p,\bar x+t_{k}u)\,x_{k}+r_{k},
\end{equation}
where
\(\|r_{k}\|\leqslant t_{k}^{2}\|x_{k}\|\sup_{\xi}\|\nabla_{x}g(\bar p+t_{k}p,\xi)-\nabla_{x}g(\bar p+t_{k}p,\bar x+t_{k}u)\|\)
with the supremum taken over the segment. Continuity of
\(\nabla_{x}g\) at \((\bar p,\bar x)\) and \(x_{k}\to x\) give
\(r_{k}=o(t_{k}^{2})\). By second-order semi-derivability of
\(g\) at \((\bar p,\bar x)\) in direction \((p,u)\),
\begin{equation}\label{eq:g-sd}
    g(\bar p+t_{k}p,\bar x+t_{k}u)
    = g(\bar p,\bar x)+t_{k}\nabla g(\bar p,\bar x)(p,u)+t_{k}^{2}\zeta_{k},
    \qquad \zeta_{k}\to d^{2}g(\bar p,\bar x)(p,u),
\end{equation}
Together with continuity of \(\nabla_x g\), this yields
\begin{equation}\label{eq:Taylor-g-1}
    g(\bar p+t_{k}p,\bar x+t_{k}u+t_{k}^{2}x_{k})
    = g(\bar p,\bar x)+t_{k}\nabla g(\bar p,\bar x)(p,u)
      +t_{k}^{2}\tilde z_{k}+o(t_{k}^{2}),
\end{equation}
where
\(\tilde z_{k}:=\nabla_{x}g(\bar p+t_{k}p,\bar x+t_{k}u)\,x_{k}+\zeta_{k}\to z\).

\emph{(\(\subset\)).} Let \(x\in\DD H(\bar p,\bar x,u)(p)\). By definition,
there exist \(t_{k}\downarrow 0\) and \(x_{k}\to x\) such that
\(\bar x+t_{k}u+t_{k}^{2}x_{k}\in H(\bar p+t_{k}p)\) for all \(k\in\N\).
Thus
\(\bar x+t_{k}u+t_{k}^{2}x_{k}\in \Omega\) and
\(g(\bar p+t_{k}p,\bar x+t_{k}u+t_{k}^{2}x_{k})\in D\). The
first membership gives \(x\in T^{2}(\Omega,\bar x,u)\). The
second, together with \eqref{eq:Taylor-g-1}, gives
\(z\in T^{2}(D,\,g(\bar p,\bar x),\,\nabla g(\bar p,\bar x)(p,u))\)
after dividing the displacement by \(t_{k}^{2}\) and passing to the limit.
Hence \(x\in\mathcal{R}\), which proves the inclusion
\(\DD H(\bar p,\bar x,u)(p)\subset\mathcal{R}\).

\emph{(\(\supset\)).} {Let \(x\in T^{2}(\Omega,\bar x,u)\) and assume that}
\(z\in T^{2}(D,g(\bar p,\bar x),\nabla g(\bar p,\bar x)(p,u))\). By second-order regularity, these memberships can be realized along any
prescribed sequence \(t_{k}\downarrow0\); hence, for every such sequence, there exist \((x_{k},z_{k})\to(x,z)\) such that, for
all \(k\in\N\),
\begin{equation}\label{eq:KD-realize}
    \bar x+t_{k}u+t_{k}^{2}x_{k}\in \Omega,
    \quad\text{and}\quad
    g(\bar p,\bar x)+t_{k}\nabla g(\bar p,\bar x)(p,u)+t_{k}^{2}z_{k}\in D.
\end{equation}
Expansion \eqref{eq:Taylor-g-1} gives, with \(\tilde z_{k}\to z\),
\begin{equation}\label{eq:Taylor-g-2}
    g(\bar p+t_{k}p,\bar x+t_{k}u+t_{k}^{2}x_{k})
    = g(\bar p,\bar x) + t_{k}\nabla g(\bar p,\bar x)(p,u)
      + t_{k}^{2}\tilde z_{k}+o(t_{k}^{2}).
\end{equation}
{Since \(\bar x+t_{k}u+t_{k}^{2}x_{k}\in \Omega\),
Robinson metric regularity of \(H\) along \(\Omega\) at
\((\bar p,\bar x)\) gives \(\alpha>0\) such that, for all large \(k\),}
\[
    \dist\bigl(\bar x+t_{k}u+t_{k}^{2}x_{k},\,H(\bar p+t_{k}p)\bigr)
    \leqslant\alpha\,
    \dist\bigl(g(\bar p+t_{k}p,\bar x+t_{k}u+t_{k}^{2}x_{k}),\,D\bigr).
\]
{By \eqref{eq:KD-realize} and \eqref{eq:Taylor-g-2}, the
right-hand side is at most
\(\alpha\bigl(t_{k}^{2}\|\tilde z_{k}-z_{k}\|+o(t_{k}^{2})\bigr)\).
Set \(\varepsilon_k:=t_k^3=o(t_k^2)\). By the definition of distance,
choose \(\bar x_k\in H(\bar p+t_kp)\) such that}
\[
    \|\bar x+t_ku+t_k^2x_k-\bar x_k\|
    \leqslant
    \dist\bigl(\bar x+t_ku+t_k^2x_k,H(\bar p+t_kp)\bigr)+\varepsilon_k.
\]
{Combined with the Robinson bound above, this gives}
\[
    \|\bar x+t_{k}u+t_{k}^{2}x_{k}-\bar x_{k}\|
    \leqslant\alpha t_{k}^{2}\|\tilde z_{k}-z_{k}\|+o(t_{k}^{2}).
\]
{Set \(\tilde x_{k}:=(\bar x_{k}-\bar x-t_{k}u)/t_{k}^{2}\). Then}
\(\|\tilde x_{k}-x_{k}\|\leqslant\alpha\|\tilde z_{k}-z_{k}\|+o(1)\to 0\),
so \(\tilde x_{k}\to x\). Since
\(\bar x_{k}=\bar x+t_{k}u+t_{k}^{2}\tilde x_{k}\in H(\bar p+t_{k}p)\) for
the arbitrary sequence \(t_{k}\downarrow 0\), {the limit \(x\)
lies in \(\DD_{\ell}H(\bar p,\bar x,u)(p)\). Hence
\(\mathcal R\subset\DD_{\ell}H(\bar p,\bar x,u)(p)\). Together with the
already proved inclusion \(\DD H(\bar p,\bar x,u)(p)\subset\mathcal R\) and
the universal inclusion \(\DD_{\ell}H\subset\DD H\), this gives}
\(\DD_{\ell}H(\bar p,\bar x,u)(p)=\DD H(\bar p,\bar x,u)(p)=\mathcal R\),
which is exactly second-order semi-derivability of \(H\) at
\((\bar p,\bar x)\) in direction \((p,u)\) and \eqref{eq:DDH-str}.
\end{proof}

\begin{proposition}{First-order semi-derivability of polyhedral constraint systems}{H-semider-polyhedral}
Let \(\Omega\subset\R^m\) and \(D\subset\R^q\) be closed convex
polyhedra, let \(g\) be continuously differentiable near
\((\bar p,\bar x)\in\gph H\), and suppose that \(H\) is Robinson
metrically regular along \(\Omega\) at \((\bar p,\bar x)\). Then, for
every \(p\in P\), \(H\) is semi-derivable at \((\bar p,\bar x)\) in
direction \(p\), and
\begin{equation}\label{eq:DH-polyhedral}
 DH(\bar p,\bar x)(p)=D_\ell H(\bar p,\bar x)(p)
 =\{d\in T(\Omega,\bar x):
       \nabla g(\bar p,\bar x)(p,d)\in T(D,g(\bar p,\bar x))\}.
\end{equation}
\end{proposition}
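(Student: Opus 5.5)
The plan is to copy the first-order analogue of the proof of Proposition~\ref{prop:DDH-str}. We show two inclusions: \(DH(\bar p,\bar x)(p)\subset\mathcal R_1\), where \(\mathcal R_1\) is the right-hand side of \eqref{eq:DH-polyhedral}, and \(\mathcal R_1\subset D_\ell H(\bar p,\bar x)(p)\). Together with the universal inclusion \(D_\ell H\subset DH\), these give both equalities at once. Polyhedrality is used in exactly one place. For a closed convex polyhedron \(M\) and \(\bar z\in M\), \(M\) coincides with \(\bar z+T(M,\bar z)\) near \(\bar z\). Hence every \(w\in T(M,\bar z)\) satisfies \(\bar z+tw\in M\) for all small \(t>0\), and tangent vectors are realized exactly along every sequence \(t_k\downarrow0\).

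\emph{Outer inclusion.} Let \(d\in DH(\bar p,\bar x)(p)\). Then there are \(t_k\downarrow0\) and \(d_k\to d\) with \(\bar x+t_kd_k\in H(\bar p+t_kp)\). The membership \(\bar x+t_kd_k\in\Omega\) gives \(d\in T(\Omega,\bar x)\). Since \(g\) is \(C^1\) near \((\bar p,\bar x)\), we have
\[
g(\bar p+t_kp,\bar x+t_kd_k)=g(\bar p,\bar x)+t_k\nabla g(\bar p,\bar x)(p,d_k)+o(t_k).
\]
The left-hand side lies in \(D\), so dividing by \(t_k\) gives \(\nabla g(\bar p,\bar x)(p,d)\in T(D,g(\bar p,\bar x))\). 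This direction uses neither polyhedrality nor regularity.

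\emph{Inner inclusion.} Let \(d\in\mathcal R_1\) and fix an arbitrary sequence \(t_k\downarrow0\). Set \(w:=\nabla g(\bar p,\bar x)(p,d)\). By polyhedrality, for all large \(k\),
\[
\bar x+t_kd\in\Omega\qquad\text{and}\qquad g(\bar p,\bar x)+t_kw\in D.
\]
The first-order Taylor expansion then gives \(\dist(g(\bar p+t_kp,\bar x+t_kd),D)=o(t_k)\). For large \(k\) this quantity is below \(\gamma\), \(\bar p+t_kp\in U_1\), and \(\bar x+t_kd\in\Omega\cap U_2\). Robinson metric regularity (Definition~\ref{def:RMR}) therefore yields
\[
\dist(\bar x+t_kd,H(\bar p+t_kp))\leqslant\alpha\,o(t_k).
\]
Choose \(x_k\in H(\bar p+t_kp)\) within an extra \(t_k^2\) of this distance, and put \(d_k:=(x_k-\bar x)/t_k\). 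Then \(\|d_k-d\|=o(1)\) and \(\bar x+t_kd_k\in H(\bar p+t_kp)\). Since the sequence \(t_k\) was arbitrary, \(d\in D_\ell H(\bar p,\bar x)(p)\).

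The only delicate step is the exact realization of tangent directions along every sequence \(t_k\). This is precisely where convex polyhedrality of \(\Omega\) and \(D\) substitutes for the first-order analogue of second-order regularity. I would justify it by the standard fact that a polyhedron is locally conic at each of its points: writing \(M=\{z:\langle a_i,z\rangle\leqslant b_i\}\), the inactive constraints stay inactive near \(\bar z\), and the active ones define \(T(M,\bar z)\). The remaining steps are routine bookkeeping of \(o(t_k)\) terms, as in the proof of Proposition~\ref{prop:DDH-str}.
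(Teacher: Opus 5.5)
Your proposal is correct and follows the paper's proof essentially step for step. The outer inclusion comes from the first-order expansion of \(g\). The inner inclusion realizes polyhedral tangent directions exactly along an arbitrary \(t_k\downarrow0\), obtains an \(o(t_k)\) residual to \(D\), and applies Robinson metric regularity to get \(x_k\in H(\bar p+t_kp)\) with \(\|x_k-\bar x-t_kd\|=o(t_k)\); the only tacit point, which the paper also leaves implicit, is that finiteness of the distance bound (with \(\dist(\cdot,\emptyset)=+\infty\)) guarantees \(H(\bar p+t_kp)\neq\emptyset\).
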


\begin{proof}
See Appendix~\ref{app:proofs} (proof of Proposition~\ref{prop:H-semider-polyhedral}).
\end{proof}
\subsection[Second-order semi-derivative of Phi]{\texorpdfstring{{Second-order semi-derivative of \(\Phi\)}}{Second-order semi-derivative of Phi}}\label{subsec:DDPhi-struct}

\begin{lemma}{Domination in compact sections}{compact-domination}
Let \(p\in P\) and \(y\in\mathcal F(p)\). If the section
\(\mathcal F(p)\cap(y-C)\) is nonempty and compact, then there exist
\(\xi\in S(p)\) and \(c\in C\) with \(y=f(p,\xi)+c\). In particular
\(f(p,\xi)\in\Phi(p)\).
\end{lemma}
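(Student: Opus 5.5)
The plan is to use the standard domination (external stability) argument: minimize a strictly \(C\)-monotone scalarization over the compact section. Set \(A:=\mathcal F(p)\cap(y-C)\). It is nonempty, since it contains \(y\), and compact by hypothesis. Fix \(\lambda\in\Int C^*\), which is nonempty because \(C\) is a pointed closed convex cone in finite dimensions. Then \(\langle\lambda,c\rangle>0\) for every \(c\in C\setminus\{0\}\). The continuous function \(w\mapsto\langle\lambda,w\rangle\) attains its minimum on the compact set \(A\) at some \(\bar w\in A\). Since \(\bar w\in\mathcal F(p)=f(p,H(p))\), there is \(\xi\in H(p)\) with \(f(p,\xi)=\bar w\). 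Because \(\bar w\in y-C\), we have \(y=f(p,\xi)+c\) with \(c:=y-\bar w\in C\).

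The main step is to show that \(\bar w\in\Min_C\mathcal F(p)=\Phi(p)\). Suppose instead that some \(w\in\mathcal F(p)\) satisfies \(w-\bar w\in-C\setminus\{0\}\). Then \(w\in\bar w-C\subset y-C-C=y-C\), using that \(C\) is a convex cone. Hence \(w\in A\). On the other hand, \(\langle\lambda,w\rangle<\langle\lambda,\bar w\rangle\) by the strict positivity of \(\lambda\) on \(C\setminus\{0\}\). This contradicts the minimality of \(\bar w\) on \(A\). Therefore \(f(p,\xi)=\bar w\in\Phi(p)\), and by \eqref{eq:Sdef} we get \(\xi\in S(p)\), which is the claim.

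The only point needing care is the existence of \(\lambda\in\Int C^*\). This is the standard fact that for a closed convex cone \(C\) in \(\R^n\), pointedness of \(C\) is equivalent to \(C^*\) having nonempty interior, and each such \(\lambda\) is strictly positive on \(C\setminus\{0\}\). This is also the assumption implicit in Proposition~\ref{prop:canonical-K}, where \(m_C(\lambda)>0\).

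If one prefers to avoid this fact, a Zorn-type argument works instead. Chains in \(A\) ordered by \(\leqslant_C\) have lower bounds by compactness and closedness of \(C\), via the finite-intersection property of the closed sets \((w-C)\cap A\). A minimal element of \(A\) is then minimal in \(\mathcal F(p)\) by the same transitivity argument as above. I would present the scalarization route as the primary proof.
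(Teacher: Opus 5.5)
Your proof is correct, and its skeleton matches the paper's. Both set \(A=\mathcal F(p)\cap(y-C)\) and find a \(C\)-minimal element of \(A\) lying in \(y-C\). Both then pass minimality from \(A\) to \(\mathcal F(p)\) by the transitivity step \(w\in\bar w-C\subset y-C\), and lift to \(\xi\in S(p)\) through \(\mathcal F(p)=f(p,H(p))\).

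The two arguments differ only in the existence step.
\begin{enumerate}[label=(\roman*)]
\item The paper treats as a black box the domination property of a nonempty compact set with respect to a pointed closed convex cone, citing Luc and Jahn.
\item You construct the dominating minimal element explicitly, by minimizing \(\langle\lambda,\cdot\rangle\) over \(A\) for some \(\lambda\in\Int C^*\). This relies on the finite-dimensional fact that a closed convex cone is pointed exactly when its dual cone has nonempty interior.
\end{enumerate}

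Your route is self-contained. It even produces a scalarization-efficient point, which is more than the lemma needs. The cited domination property does not require a solid dual cone, so it also works in settings such as infinite-dimensional spaces, where \(\Int C^*\) may be empty. Your Zorn-type fallback is essentially the argument behind that property. Since the paper works in \(\R^n\), either route is fully adequate here.
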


\begin{proof}
See Appendix~\ref{app:proofs} (proof of Lemma~\ref{lem:compact-domination}).
\end{proof}

\begin{remark}{Local validity of the compact-section hypothesis}{compact-domination-local}
Under \((A_1)\), for \((p,y)\) near \((\bar p,\bar y)\), the set
\[
 L(p,y):=\{x\in H(p):f(p,x)\in y-C\}
\]
is bounded. Closedness of \(\Omega,D,C\) and continuity of \(f,g\)
make \(L(p,y)\) closed, hence compact. Its image is
\(\mathcal F(p)\cap(y-C)\), so
Lemma~\ref{lem:compact-domination} applies.
\end{remark}

\begin{proposition}{First-order semi-derivative of \(\mathcal F\) in the structured setting}{DF-str-first-order}
Let \((\bar p,\bar y)\in\gph\mathcal F\) and \(p\in P\). Fix a convex
cone \(K\) that dilates \(C\). Assume \((A_1)\) and conditions
\((A_2)\) and \((A_4)\) of
Subsection~\ref{subsec:sec4-setup}.
Then \(\mathcal F\) is
semi-derivable at \((\bar p,\bar y)\) in direction \(p\), and

\[
    D\mathcal F(\bar p,\bar y)(p)
    =D_{\ell}\mathcal F(\bar p,\bar y)(p)
    =\bigcup_{\substack{\bar x\in\Delta_{0}(\bar p,\bar y)\\
                        u\in DH(\bar p,\bar x)(p)}}
      \bigl\{\nabla f(\bar p,\bar x)(p,u)\bigr\}.
\]
\end{proposition}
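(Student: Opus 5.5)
Let \(\mathcal R\) denote the union on the right-hand side. I would prove the chain
\[
 \mathcal R\subset D_\ell\mathcal F(\bar p,\bar y)(p)\subset D\mathcal F(\bar p,\bar y)(p)\subset\mathcal R .
\]
The middle inclusion is universal, so only the two outer inclusions need arguments.

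\emph{Step 1: \(\mathcal R\subset D_\ell\mathcal F\).} Fix \(\bar x\in\Delta_0(\bar p,\bar y)\) and \(u\in DH(\bar p,\bar x)(p)\), and take an arbitrary sequence \(t_k\downarrow0\).
\begin{itemize}
\item By \((A_2)\), \(H\) is semi-derivable at \((\bar p,\bar x)\) in direction \(p\), so \(u\in D_\ell H(\bar p,\bar x)(p)\).
\item Hence there are \(u_k\to u\) with \(\bar x+t_ku_k\in H(\bar p+t_kp)\).
\item Since \(f\) is \(C^1\) near \((\bar p,\bar x)\) and \(f(\bar p,\bar x)=\bar y\),
\[
 f(\bar p+t_kp,\bar x+t_ku_k)=\bar y+t_k\nabla f(\bar p,\bar x)(p,u)+o(t_k).
\]
\end{itemize}
These points lie in \(\mathcal F(\bar p+t_kp)\). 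Because the sequence \(t_k\) was arbitrary, \(\nabla f(\bar p,\bar x)(p,u)\in D_\ell\mathcal F(\bar p,\bar y)(p)\). This step is routine.

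\emph{Step 2: \(D\mathcal F\subset\mathcal R\).} Let \(w\in D\mathcal F(\bar p,\bar y)(p)\). Then there exist \(t_k\downarrow0\), \(x_k\in H(\bar p+t_kp)\) and \(w_k\to w\) with \(f(\bar p+t_kp,x_k)=\bar y+t_kw_k\). The argument has three parts.
\begin{itemize}
\item \emph{Subsequential limit in the base fiber.} For each \(k\), the point \(y_k:=\bar y+t_kw_k+t_k c_0\) (with any fixed \(c_0\in C\), or simply \(c_0=0\)) satisfies \(x_k\in L(\bar p+t_kp,y_k)\), and \((\bar p+t_kp,y_k)\to(\bar p,\bar y)\). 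By \((A_1)\), the sequence \(x_k\) is therefore bounded. Pass to a subsequence with \(x_k\to\bar x\). Closedness of \(\Omega\) and \(D\) and continuity of \(f,g\) give \(\bar x\in H(\bar p)\) and \(f(\bar p,\bar x)=\bar y\), so \(\bar x\in\Delta_0(\bar p,\bar y)\).
\item \emph{Boundedness of the difference quotients.} Set \(u_k:=(x_k-\bar x)/t_k\). I claim \(u_k\) is bounded; this is the core of the proof, treated below.
\item \emph{Identifying the limit.} Once \(u_k\) is bounded, pass to \(u_k\to u\). Then \(u\in DH(\bar p,\bar x)(p)\) by definition. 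A \(C^1\) expansion of \(f\) gives \(w=\nabla f(\bar p,\bar x)(p,u)\), so \(w\in\mathcal R\).
\end{itemize}

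\emph{Main obstacle: boundedness of \(u_k\).} Suppose instead \(\|u_k\|\to\infty\) along a subsequence, and set \(s_k:=\|x_k-\bar x\|\), so \(t_k/s_k\to0\). Normalize \(\tilde u_k:=(x_k-\bar x)/s_k\to\tilde u\) with \(\|\tilde u\|=1\).
\begin{itemize}
\item Since \(x_k\in\Omega\), we get \(\tilde u\in T(\Omega,\bar x)\).
\item From \(g(\bar p+t_kp,x_k)\in D\) and a \(C^1\) expansion, dividing by \(s_k\) and using \(t_k/s_k\to0\), we get \(\nabla_xg(\bar p,\bar x)\tilde u\in T(D,g(\bar p,\bar x))\).
\item From \(f(\bar p+t_kp,x_k)-\bar y=t_kw_k=o(s_k)\), we get \(\nabla_xf(\bar p,\bar x)\tilde u=0\in-K\).
\end{itemize}
Thus \(\tilde u\neq0\) belongs to the set that \((A_4)\) forces to be \(\{0\}\), a contradiction.

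This case was easy because the image difference is \(O(t_k)\), so its normalized limit is exactly \(0\). If instead one only had \(f(\bar p+t_kp,x_k)-\bar y\in-K+O(t_k)\), closedness of \(K\) would be needed. I would check the \(C^1\) remainder estimates uniformly near \((\bar p,\bar x)\) using continuity of the derivatives, as in \eqref{eq:g-mvt}. Semi-derivability then follows from the chain of inclusions.
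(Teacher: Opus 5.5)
Your proposal is correct and follows essentially the same route as the paper's proof. The lower inclusion comes from semi-derivability of \(H\) in \((A_2)\) plus a \(C^1\) expansion along an arbitrary sequence. The upper inclusion comes from \((A_1)\)-boundedness, a limit point in \(\Delta_0(\bar p,\bar y)\), and a normalized blow-up contradicting \((A_4)\) to bound \((x_k-\bar x)/t_k\); your check that \(x_k\in L(\bar p+t_kp,y_k)\) with \(y_k=f(\bar p+t_kp,x_k)\) spells out the paper's remark that ``the objective identity and \((A_1)\)'' give boundedness.
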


\begin{proof}
Let \(a\in D\mathcal F(\bar p,\bar y)(p)\). Choose
\(t_{k}\downarrow0\), \(a_{k}\to a\), and
\(x_{k}\in H(\bar p+t_{k}p)\) such that
\(f(\bar p+t_{k}p,x_{k})=\bar y+t_{k}a_{k}\). The objective identity
and \((A_1)\) show that \(\{x_k\}\) is bounded. After passing to a
subsequence, let \(x_k\to\bar x\). Closedness of \(\Omega,D\) and
continuity of \(g\) give \(\bar x\in H(\bar p)\). Continuity of \(f\)
gives \(f(\bar p,\bar x)=\bar y\), so
\(\bar x\in\Delta_{0}(\bar p,\bar y)\). We claim that
\(\{(x_{k}-\bar x)/t_{k}\}\) is bounded. Otherwise, after another
subsequence, with \(r_{k}:=\|x_{k}-\bar x\|\), one has
\(t_{k}/r_{k}\to0\) and
\((x_{k}-\bar x)/r_{k}\to\tilde u\) for some \(\|\tilde u\|=1\).
Closedness of \(\Omega,D\), feasibility of \(x_k\), and the \(C^{1}\)
expansion of \(g\) give
\(\tilde u\in T(\Omega,\bar x)\) and
\(\nabla_xg(\bar p,\bar x)\tilde u\in T(D,g(\bar p,\bar x))\).
Dividing the objective identity by \(r_k\) gives
\(\nabla_xf(\bar p,\bar x)\tilde u=0\in-K\), contrary to \((A_4)\).
Thus, along a subsequence,
\((x_k-\bar x)/t_k\to u\in DH(\bar p,\bar x)(p)\), and the \(C^{1}\)
expansion of \(f\) gives
\(a=\nabla f(\bar p,\bar x)(p,u)\). This proves the upper inclusion.

Conversely, fix \(\bar x\in\Delta_{0}(\bar p,\bar y)\) and
\(u\in DH(\bar p,\bar x)(p)\), and let \(t_k\downarrow0\) be arbitrary.
The semi-derivability of \(H\) provides \(u_k\to u\) such that
\(\bar x+t_ku_k\in H(\bar p+t_kp)\). Hence
\[
 \frac{f(\bar p+t_kp,\bar x+t_ku_k)-\bar y}{t_k}
 \longrightarrow \nabla f(\bar p,\bar x)(p,u),
\]
so every element of the displayed union belongs to
\(D_{\ell}\mathcal F(\bar p,\bar y)(p)\). The universal inclusion
\(D_{\ell}\mathcal F\subset D\mathcal F\) completes the proof.
\end{proof}

\begin{theorem}{Second-order semi-derivative of \(\Phi\) in the structured setting}{DDPhi-str}
Let \(\bar p\in P\) and let \(\bar y\) be a uniformly Henig efficient
point of \(\mathcal F\) at \(\bar p\), with \((p,v)\in\gph D\Phi(\bar p,\bar y)\).
Fix a cone \(K\) witnessing this uniform Henig efficiency. Such a cone is
supplied by Proposition~\ref{prop:canonical-K} whenever its common-scalarizer
hypothesis holds. Assume \((A_1)\) and conditions \((A_2)\), \((A_3)\),
\((A_4)\), \((A_5)\), and \((A_6)\) of
Subsection~\ref{subsec:sec4-setup}; see
Table~\ref{tab:sec3-hypotheses}.
Then \(\Phi\) is second-order semi-derivable at \((\bar p,\bar y)\) in
direction \((p,v)\), and
\begin{equation}\label{eq:DDPhi-str}
    \DD\Phi(\bar p,\bar y,v)(p)
    = \Min\nolimits_{C}\!\!\!\bigcup_{\substack{\bar x\in\Delta_{0}(\bar p,\bar y)\\ u\in\Delta_{1}(\bar p,\bar x,p,v)\\
                                                x\in\DD H(\bar p,\bar x,u)(p)}}
      \bigl\{\nabla_{x}f(\bar p,\bar x)\,x+d^{2}f(\bar p,\bar x)(p,u)\bigr\},
\end{equation}
where \(\DD H(\bar p,\bar x,u)(p)\) is given by \eqref{eq:DDH-str}.
Here and below, the minimum is interpreted with the convention
\(\Min_C\emptyset=\emptyset\) from Table~\ref{tab:notation}.
\end{theorem}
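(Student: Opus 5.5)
Let \(\mathcal G\) denote the union inside \(\Min_C\) on the right of \eqref{eq:DDPhi-str}. The plan is to prove three inclusions: an outer inclusion \(\DD\Phi(\bar p,\bar y,v)(p)\subset\Min_C\mathcal G\), an inner inclusion \(\Min_C\mathcal G\subset\DD_\ell\Phi(\bar p,\bar y,v)(p)\), and the universal inclusion \(\DD_\ell\Phi\subset\DD\Phi\). The first step is an auxiliary second-order analogue of Proposition~\ref{prop:DF-str-first-order} for \(\mathcal F\):
\[
\DD\mathcal F(\bar p,\bar y,v)(p)\subset\mathcal G+\text{(recession terms in }K\text{)},\qquad \mathcal G\subset\DD_\ell\mathcal F(\bar p,\bar y,v)(p).
\]
For the second inclusion, fix \(\bar x\in\Delta_0\), \(u\in\Delta_1\), and \(x\in\DD H(\bar p,\bar x,u)(p)\). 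By Proposition~\ref{prop:DDH-str}, which applies under \((A_3)\) and \((A_5)\), \(H\) is second-order semi-derivable. Hence along any \(t_k\downarrow0\) there are \(x_k\to x\) with \(\bar x+t_ku+t_k^2x_k\in H(\bar p+t_kp)\). The expansion of \(f\) obtained exactly as in \eqref{eq:Taylor-g-1} then yields \(f(\cdot)=\bar y+t_kv+t_k^2(\nabla_xf\,x+d^2f(\bar p,\bar x)(p,u))+o(t_k^2)\).

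\emph{Outer inclusion.} Take \(w\in\DD\Phi(\bar p,\bar y,v)(p)\), realized by \(y_k=\bar y+t_kv+t_k^2w_k\in\Phi(\bar p+t_kp)\) and \(x_k\in S(\bar p+t_kp)\) with \(f(\bar p+t_kp,x_k)=y_k\). By \((A_1)\) and the first-order argument of Proposition~\ref{prop:DF-str-first-order}, which uses \((A_4)\), we get, along a subsequence, \(x_k\to\bar x\in\Delta_0\) and \((x_k-\bar x)/t_k\to u\in DH(\bar p,\bar x)(p)\). Moreover \(\nabla f(\bar p,\bar x)(p,u)=v\), so \(u\in\Delta_1\). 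Next write \(x_k=\bar x+t_ku+t_k^2\xi_k\).

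\emph{Main obstacle: boundedness of \(\xi_k\).} Suppose \(\|\xi_k\|=:\rho_k\to\infty\). Put \(r_k:=t_k\rho_k\), so that \(t_k/r_k\to0\) and \(r_k\to0\) (the latter since \(t_k^2\xi_k=o(t_k)\)). Then \((x_k-\bar x-t_ku)/(t_kr_k)\to\tilde x\) with \(\|\tilde x\|=1\), and \(\tilde x\in T''(\Omega,\bar x,u)\). The expansion of \(g\) puts \(\nabla_xg\,\tilde x\) in \(T''(D,g(\bar p,\bar x),\nabla g(p,u))\). Dividing the \(f\)-expansion by \(t_kr_k\) gives \(\nabla_xf\,\tilde x=\lim w_k t_k/r_k\). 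If \(w_k\) stays bounded, this limit is \(0\in-K\), contradicting \((A_6)\).

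To get boundedness of \(w_k\), I would argue as follows. Efficiency of \(y_k\) and the inner inclusion, applied at the point \(\bar x\) and direction \(u\) with some element of \(\mathcal G\), show that \(w_k\) cannot tend to \(+\infty\) in the \(C\)-direction beyond comparison with a feasible competitor. Uniform Henig efficiency with the cone \(K\) controls the opposite direction: if \(\|w_k\|\to\infty\), normalized limits of \(w_k\) would lie in \(-K\). In that case the same blow-up produces a \(\tilde x\) with \(\nabla_xf\,\tilde x\in-K\), again excluded by \((A_6)\). I expect this dichotomy (normalize by \(\max\{\|\xi_k\|,\|w_k\|\}\)) to be the delicate step.

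Once \(\xi_k\to x\) along a subsequence, we have \(x\in\DD H(\bar p,\bar x,u)(p)\) and \(w=\nabla_xf\,x+d^2f(\bar p,\bar x)(p,u)\in\mathcal G\). Minimality of \(w\) in \(\mathcal G\) also follows. If \(w'\in\mathcal G\) with \(w'\in w-C\setminus\{0\}\), then the realizing points for \(w'\), available along the same \(t_k\) by the inner \(\mathcal F\) inclusion, give
\[
f(\cdot)-y_k=t_k^2(w'-w)+o(t_k^2).
\]
Since \(w'-w\in-C\setminus\{0\}\subset\Int(-K)\), this vector lies in \(-K\setminus\{0\}\) for large \(k\). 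This contradicts Henig \(K\)-efficiency of \(y_k\), which holds by uniformity since \((\bar p+t_kp,y_k)\to(\bar p,\bar y)\).

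\emph{Inner inclusion.} Let \(w\in\Min_C\mathcal G\) and \(t_k\downarrow0\) be arbitrary. The inner \(\mathcal F\) inclusion gives \(z_k=\bar y+t_kv+t_k^2w_k\in\mathcal F(\bar p+t_kp)\) with \(w_k\to w\). By Remark~\ref{rem:compact-domination-local} and Lemma~\ref{lem:compact-domination}, there are \(\hat y_k\in\Phi(\bar p+t_kp)\) with \(\hat y_k\in z_k-C\), realized by \(\hat x_k\in S(\bar p+t_kp)\). Write \(\hat y_k=\bar y+t_kv+t_k^2\hat w_k\). The blow-up argument above, which uses \((A_1)\), \((A_4)\) and \((A_6)\), shows that \(\hat w_k\) is bounded. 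Every cluster point \(\hat w\) therefore lies in \(\mathcal G\) by the outer analysis and satisfies \(\hat w\in w-C\). Minimality of \(w\) forces \(\hat w=w\), so \(\hat w_k\to w\) and \(w\in\DD_\ell\Phi(\bar p,\bar y,v)(p)\). Combining the three inclusions yields second-order semi-derivability of \(\Phi\) and \eqref{eq:DDPhi-str}.
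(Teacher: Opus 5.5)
Your outer inclusion is sound and follows the paper's Step~1 together with the uniform-Henig argument behind \(\DD\Phi\subseteq\Min_C\DD\mathcal F\). The ``delicate dichotomy'' you attach to it is not needed there, since \(w\in\DD\Phi(\bar p,\bar y,v)(p)\) already gives \(w_k\to w\).

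The genuine gap is in the inner inclusion, at the sentence claiming that the earlier blow-up shows \(\hat w_k\) is bounded. Set \(c_k:=z_k-\hat y_k\in C\), so that \(\hat w_k=w_k-c_k/t_k^2\). A priori you only know that \(c_k\) lies in \(C\) and is bounded. Your blow-up began with the first-order extraction, which requires \((\hat y_k-\bar y)/t_k=v+t_k\hat w_k\) to be bounded and to converge to \(v\). Only then does \((\hat x_k-\bar x)/t_k\to u\in\Delta_1(\bar p,\bar x,p,v)\), and \((A_6)\) is assumed only for such \(u\).

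Suppose instead that \(c_k/t_k\to\gamma\in C\setminus\{0\}\). The first-order argument with \((A_4)\) then gives, along a subsequence, \((\hat x_k-\bar x)/t_k\to u'\) with \(\nabla f(\bar p,\bar x)(p,u')=v-\gamma\), so \(u'\notin\Delta_1(\bar p,\bar x,p,v)\). Relative to any \(u\in\Delta_1(\bar p,\bar x,p,v)\), the remainder \(\xi_k=(\hat x_k-\bar x-t_ku)/t_k^2\) satisfies \(t_k\xi_k\to u'-u\neq0\). Hence \(r_k=t_k\|\xi_k\|\not\to0\), and no element of \(T''(\Omega,\bar x,u)\) is produced. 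The fact that normalized limits of \(\hat w_k\) lie in \(-C\) is automatic from \(c_k\in C\), but it does not help at this scale.

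What is missing is the multi-scale control of \(c_k\) that the paper carries out in Step~3:
\begin{enumerate}[label=(\alph*)]
\item \((A_1)\) and efficiency of \(\bar y\) in \(\mathcal F(\bar p)\) force every cluster point of \(\hat x_k\) into \(\Delta_0(\bar p,\bar y)\), so \(c_k\to0\).
\item A first-order blow-up, with the normalized correction in \(-C\subset-K\), contradicts \((A_4)\) unless \(c_k=O(t_k)\).
\item The limit \(\gamma:=\lim c_k/t_k\) must vanish, because \(v-\gamma\in D\mathcal F(\bar p,\bar y)(p)\) while \(v\in\Min_C D\mathcal F(\bar p,\bar y)(p)\).
\end{enumerate}
The last fact in (c) does not follow from \(v\in D\Phi(\bar p,\bar y)(p)\) alone. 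The paper derives it from uniform Henig efficiency combined with the first-order semi-derivability of \(\mathcal F\) in Proposition~\ref{prop:DF-str-first-order}. That semi-derivability supplies a competitor in \(\mathcal F(\bar p+t_kp)\) along the very sequence realizing \(v\).

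Only once \(c_k=o(t_k)\) and \((\hat x_k-\bar x)/t_k\to u\in\Delta_1(\bar p,\bar x,p,v)\) does your \((A_6)\) blow-up with normalized limit in \(-C\) give \(c_k=O(t_k^2)\), i.e.\ bounded \(\hat w_k\). From there your cluster-point and minimality argument concludes exactly as in the paper's Step~4.
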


\begin{proof}
Conditions \((A_2)\) and \((A_4)\), through
Proposition~\ref{prop:DF-str-first-order}, give the first-order
semi-derivability of \(\mathcal F\) at
\((\bar p,\bar y)\) in direction \(p\). We prove the second-order
formula directly. Let \(\mathcal U\) be the set on the right-hand side of
\eqref{eq:DDPhi-str} before taking \(\Min_{C}\). For every
\(\bar x\in\Delta_{0}(\bar p,\bar y)\) and
\(u\in\Delta_{1}(\bar p,\bar x,p,v)\), the first-order compatibility condition
\eqref{eq:DDH-str-fo-compat} holds. Hence Proposition~\ref{prop:DDH-str}, applied
with the stated regularity and semi-derivability assumptions on \(g,\Omega,D\),
gives the second-order semi-derivability of \(H\) at \((\bar p,\bar x)\) in
direction \((p,u)\). Condition \((A_1)\) is exactly the locally bounded
level-set condition for \(f\) in Definition~\ref{def:lbls}.

\medskip
\emph{Step 1: \(\DD\mathcal F(\bar p,\bar y,v)(p)=\mathcal U\).}
For every
\(\bar x\in\Delta_{0}(\bar p,\bar y)\),
\(u\in\Delta_{1}(\bar p,\bar x,p,v)\), and
\(x\in\DD H(\bar p,\bar x,u)(p)\), the second-order semi-derivability of
\(H\) from Proposition~\ref{prop:DDH-str} gives, for every
\(t_{k}\downarrow0\), a sequence \(w_{k}\to x\) with
\(\bar x+t_{k}u+t_{k}^{2}w_{k}\in H(\bar p+t_{k}p)\). The \(C^{1}\)
mean-value theorem together with second-order semi-derivability of \(f\)
then yields
\(\nabla_{x}f(\bar p,\bar x)x+d^{2}f(\bar p,\bar x)(p,u)\in
\DD_{\ell}\mathcal F(\bar p,\bar y,v)(p)\). This is the realizing-arc part
of \cite[Thm.~4.2]{Bao2025JOTA}, here obtained without the compactness
hypotheses imposed in that theorem. For the reverse inclusion, take
\(y\in\DD\mathcal F(\bar p,\bar y,v)(p)\). Then there are
\(t_{k}\downarrow0\), \(\eta_{k}\to y\) and \(x_{k}\in H(\bar p+t_{k}p)\) with
\begin{equation}\label{eq:DDPhi-str-value}
   f(\bar p+t_{k}p,x_{k})=\bar y+t_{k}v+t_{k}^{2}\eta_{k}.
\end{equation}
The objective identity \eqref{eq:DDPhi-str-value} and \((A_1)\) show
that \(\{x_k\}\) is bounded. After passing to a subsequence, let
\(x_k\to\bar x\). Closedness of \(\Omega,D\) and continuity of \(g\)
imply \(\bar x\in H(\bar p)\). Passing to the limit in
\eqref{eq:DDPhi-str-value} by continuity of \(f\) gives
\(f(\bar p,\bar x)=\bar y\), hence
\(\bar x\in\Delta_{0}(\bar p,\bar y)\).
\emph{Claim 0 (objective-aware first-order boundedness).}
Let \(t_k\downarrow0\) and \(x_k\in H(\bar p+t_kp)\) satisfy
\(x_k\to\bar x\in\Delta_0(\bar p,\bar y)\). If
\(\{(f(\bar p+t_kp,x_k)-\bar y)/t_k\}\) is bounded, then
\(\{(x_k-\bar x)/t_k\}\) is bounded. Indeed, otherwise, after passing
to a subsequence and setting \(r_k:=\|x_k-\bar x\|\), we have
\(t_k/r_k\to0\) and
\((x_k-\bar x)/r_k\to\tilde u\) for some \(\|\tilde u\|=1\).
Closedness of \(\Omega,D\), feasibility, and the \(C^1\) expansion of
\(g\) give \(\tilde u\in T(\Omega,\bar x)\) and
\(\nabla_xg(\bar p,\bar x)\tilde u\in T(D,g(\bar p,\bar x))\).
The bounded objective quotient and \(t_k/r_k\to0\) give
\(\nabla_xf(\bar p,\bar x)\tilde u=0\in-K\). This contradicts \((A_4)\) at \(\bar x\), which is imposed
independently
of the existence of a vector in \(\Delta_1(\bar p,\bar x,p,v)\).

\emph{First-order extraction.} The quotient
\((f(\bar p+t_kp,x_k)-\bar y)/t_k=v+t_k\eta_k\) is bounded, so
Claim~0 gives the boundedness of \(\{(x_k-\bar x)/t_k\}\). Along a
subsequence, \((x_k-\bar x)/t_k\to u\). Feasibility gives
\(u\in DH(\bar p,\bar x)(p)\), and the \(C^1\) expansion of \(f\)
gives \(\nabla f(\bar p,\bar x)(p,u)=v\). Hence
\(u\in\Delta_1(\bar p,\bar x,p,v)\). In particular, if this
\(\Delta_1\)-set is empty at a fixed \(\bar x\), no sequence realizing
an element of \(\DD\mathcal F(\bar p,\bar y,v)(p)\) can converge to
that branch; otherwise Claim~0 and the expansion above would produce such
a vector \(u\).
\emph{Second-order extraction.} The quotient
\(\eta_{k}=(f(\bar p+t_{k}p,x_{k})-\bar y-t_{k}v)/t_{k}^{2}\) is bounded. If
\(w_{k}:=(x_{k}-\bar x-t_{k}u)/t_{k}^{2}\) blows up, then, with
\(s_{k}:=t_{k}\|w_{k}\|=\|x_{k}-\bar x-t_{k}u\|/t_{k}\to0\) and \(t_{k}/s_{k}\to0\),
the unit limit \(\tilde x:=\lim w_{k}/\|w_{k}\|\) satisfies
\(\tilde x\in T''(\Omega,\bar x,u)\),
\(\nabla_{x}g(\bar p,\bar x)\tilde x\in T''(D,g(\bar p,\bar x),\nabla g(\bar p,\bar x)(p,u))\),
and the mean-value expansion of \(f\) gives
\(\nabla_{x}f(\bar p,\bar x)\tilde x=0\in-K\). Thus \(\tilde x\) lies in the set
of \((A_6)\), a contradiction. Hence \(w_{k}\to w\in\DD H(\bar p,\bar x,u)(p)\)
along a subsequence, and passing to the limit in \eqref{eq:DDPhi-str-value},
\(y=\nabla_{x}f(\bar p,\bar x)w+d^{2}f(\bar p,\bar x)(p,u)\in\mathcal U\). Hence
\(\DD\mathcal F=\DD_{\ell}\mathcal F=\mathcal U\), and \(\mathcal F\) is
second-order semi-derivable.

\medskip
\emph{Step 2: reduction to efficient values.}
By uniform Henig efficiency,
\[
\DD\Phi(\bar p,\bar y,v)(p)\subseteq
\Min_{C}\DD\mathcal F(\bar p,\bar y,v)(p),
\]
\cite[Lem.~3.1(ii)]{Bao2025JOTA} (applicable since
\(\mathcal F\) is second-order semi-derivable by Step~1). We prove the
reverse inclusion. Take
\(y\in\Min_{C}\DD\mathcal F(\bar p,\bar y,v)(p)\).
Proposition~\ref{prop:DF-str-first-order} proves that
\(\mathcal F\) is first-order semi-derivable at \((\bar p,\bar y)\)
in direction \(p\). Since \(\Phi(q)\subset\mathcal F(q)\),
\(v\in D\mathcal F(\bar p,\bar y)(p)\). If \(v\) were not
\(C\)-minimal, take \(a\in D\mathcal F(\bar p,\bar y)(p)\) with
\(v-a\in C\setminus\{0\}\). Choose \(t_k\downarrow0\) and \(v_k\to v\)
with \(\bar y+t_kv_k\in\Phi(\bar p+t_kp)\). The fixed-direction equality
\(D\mathcal F(\bar p,\bar y)(p)=D_\ell\mathcal F(\bar p,\bar y)(p)\)
yields \(a_k\to a\), along the same sequence, such that
\(\bar y+t_ka_k\in\mathcal F(\bar p+t_kp)\). Since
\(v-a\in C\setminus\{0\}\subset\Int K\), for all large \(k\),
\((\bar y+t_ka_k)-(\bar y+t_kv_k)=t_k(a_k-v_k)\in-K\setminus\{0\}\).
Uniform Henig
efficiency makes \(\bar y+t_kv_k\) Henig \(K\)-efficient for large \(k\),
a contradiction. Hence
\(v\in\Min_C D\mathcal F(\bar p,\bar y)(p)\). Fix an arbitrary sequence \(t_{k}\downarrow0\). Since Step~1 gives
\(\DD\mathcal F(\bar p,\bar y,v)(p)=\DD_{\ell}\mathcal F(\bar p,\bar y,v)(p)\),
we can choose \(y_{k}\to y\) such that
\[
   \bar y_{k}:=\bar y+t_{k}v+t_{k}^{2}y_{k}
      \in\mathcal F(\bar p+t_{k}p).
\]
For all large \(k\), Remark~\ref{rem:compact-domination-local} shows
that the section
\(\mathcal F(\bar p+t_kp)\cap(\bar y_k-C)\) is compact. Hence
Lemma~\ref{lem:compact-domination} applies to
\(\bar y_{k}\in\mathcal F(\bar p+t_{k}p)\), yielding
\(\bar z_{k}\in\Phi(\bar p+t_{k}p)\), \(c_{k}\in C\), and
\(\xi_{k}\in S(\bar p+t_{k}p)\) such that
\[
   \bar y_{k}=\bar z_{k}+c_{k},\qquad
   \bar z_{k}=f(\bar p+t_{k}p,\xi_{k}).
\]

\emph{Step 3: control of the correction \(c_{k}\).}
We prove \(y\in\DD_{\ell}\Phi(\bar p,\bar y,v)(p)\). By the subsequence
principle it suffices to extract, from an arbitrary subsequence of the given
\(\{t_{k}\}\), a further subsequence along which
\((\bar z_{k}-\bar y-t_{k}v)/t_{k}^{2}\to y\) \textup{(}recall
\(\bar z_{k}\in\Phi(\bar p+t_{k}p)\)\textup{)}. Fix a subsequence.
Since \(\bar z_k\in\bar y_k-C\), \(\bar y_k\to\bar y\), and
\(\bar z_k=f(\bar p+t_kp,\xi_k)\), condition \((A_1)\) shows that
\(\{\xi_k\}\) is bounded. After passing to a further subsequence, let
\(\xi_k\to\bar x\). Closedness of \(\Omega,D\) and continuity of \(g\)
give \(\bar x\in H(\bar p)\). Continuity of \(f\) then gives
\(\bar z_{k}=f(\bar p+t_{k}p,\xi_{k})\to f(\bar p,\bar x)\).
Hence \(c_{k}=\bar y_{k}-\bar z_{k}\to\bar y-f(\bar p,\bar x)=:c\), with
\(c\in C\) because \(C\) is closed and \(c_{k}\in C\). As \(\bar x\in H(\bar p)\),
\(f(\bar p,\bar x)\in\mathcal F(\bar p)\) and
\(f(\bar p,\bar x)-\bar y=-c\in-C\). The efficiency of \(\bar y\), i.e.
\((\mathcal F(\bar p)-\bar y)\cap(-C\setminus\{0\})=\emptyset\), forces
\(c=0\). Thus \(f(\bar p,\bar x)=\bar y\), so
\(\bar x\in\Delta_{0}(\bar p,\bar y)\), \(\bar z_{k}\to\bar y\), and
\(c_{k}\to0\).
We first show that \(c_{k}=o(t_{k})\). Since \(f\) is
continuously differentiable in a neighborhood of \((\bar p,\bar x)\) by
\((A_2)\), it is locally Lipschitz there with some constant \(L_{f}\),
and similarly \(g\) is locally Lipschitz at \((\bar p,\bar x)\).
If \(\{c_{k}/t_{k}\}\) were unbounded,
then, after passing to a subsequence, \(\|c_{k}\|/t_{k}\to\infty\). The identity
\(\bar y_{k}=\bar z_{k}+c_{k}\), together with the boundedness of \(y_{k}\), then
implies \(\|\xi_{k}-\bar x\|/t_{k}\to\infty\); set
\(r_{k}:=\|\xi_{k}-\bar x\|\), so \(t_{k}/r_{k}\to 0\).
The Lipschitz bound
\(\|c_{k}\|=\|\bar y_{k}-\bar z_{k}\|\le\|\bar y_{k}-\bar y\|+L_{f}(t_{k}+r_{k})=L_{f}(t_{k}+r_{k})+O(t_{k}^{2})\)
gives \(\|c_{k}\|/r_{k}\) bounded; after passing to a further subsequence,
\(\|c_{k}\|/r_{k}\to\mu\ge 0\).
The unit limit \(\eta:=\lim(\xi_{k}-\bar x)/r_{k}\) satisfies
\(\eta\in T(\Omega,\bar x)\) because \(\xi_{k}\in\Omega\) and \(\Omega\) is
closed; and \(\nabla_{x}g(\bar p,\bar x)\eta\in T(D,g(\bar p,\bar x))\)
because \(g(\bar p+t_{k}p,\xi_{k})\in D\), the Lipschitz expansion of \(g\)
gives \((g(\bar p+t_{k}p,\xi_{k})-g(\bar p,\bar x))/r_{k}\to\nabla_{x}g(\bar p,\bar x)\eta\),
and \(D\) is closed. Consequently \(\eta\) satisfies the feasibility
conditions in \((A_4)\), and
\[
   \nabla_{x}f(\bar p,\bar x)\eta
      =-\mu\tilde c\in -C\subset -K
\]
for some \(\mu\ge0\) and
\(\tilde c:=\lim c_{k}/\|c_{k}\|\in C\). This contradicts \((A_4)\). Hence
\(\{c_{k}/t_{k}\}\) is bounded. Passing to a subsequence, let
\(c_{k}/t_{k}\to\gamma\in C\). Then
\[
   \frac{\bar z_{k}-\bar y}{t_{k}}\to v-\gamma
      \in D\mathcal F(\bar p,\bar y)(p).
\]
Since \(v\in\Min_{C}D\mathcal F(\bar p,\bar y)(p)\), we get
\(\gamma=0\). Thus \(c_{k}=o(t_{k})\). Applying the first-order extraction from
Step 1 to \(\xi_{k}\), we may assume that
\((\xi_{k}-\bar x)/t_{k}\to u\in\Delta_{1}\). Set
\[
   w_{k}:=\frac{\xi_{k}-\bar x-t_{k}u}{t_{k}^{2}},
   \qquad
   \zeta_{k}:=\frac{\bar z_{k}-\bar y-t_{k}v}{t_{k}^{2}}
      =y_{k}-\frac{c_{k}}{t_{k}^{2}}.
\]
We next show that \(\{\zeta_{k}\}\) is bounded. If it were unbounded,
then \(\|c_{k}\|/t_{k}^{2}\to\infty\) along a subsequence and
\(\zeta_{k}/\|\zeta_{k}\|\to-\hat c\in-C\), with \(\|\hat c\|=1\). The
mean-value expansion
\[
   \zeta_{k}
      =\nabla_{x}f(\bar p,\bar x)w_{k}+\theta_{k}+o(\|w_{k}\|),
\]
where \(\{\theta_{k}\}\) is bounded, implies \(\|w_{k}\|\to\infty\).
Let \(\tilde x:=\lim w_{k}/\|w_{k}\|\). Then \(\tilde x\) satisfies the
feasibility conditions in \((A_6)\). Moreover, after passing to a subsequence,
\(\|\zeta_{k}\|/\|w_{k}\|\to\rho\ge0\), and therefore
\[
   \nabla_{x}f(\bar p,\bar x)\tilde x
      =-\rho\hat c\in -C\subset -K.
\]
This contradicts \((A_6)\). Hence \(\{\zeta_{k}\}\) is bounded, and
therefore \(c_{k}=O(t_{k}^{2})\).

\emph{Step 4: minimality gives the required limit.}
Apply the second-order extraction from Step 1 to the sequence \(\xi_{k}\), with
value quotient \(\zeta_{k}\). After passing to a subsequence,
\[
   \zeta_{k}\to
   \zeta=\nabla_{x}f(\bar p,\bar x)w+d^{2}f(\bar p,\bar x)(p,u)
      \in\DD\mathcal F(\bar p,\bar y,v)(p)
\]
for some \(w\in\DD H(\bar p,\bar x,u)(p)\). Since
\(c_{k}/t_{k}^{2}=y_{k}-\zeta_{k}\), we have \(y-\zeta\in C\). The minimality of
\(y\in\Min_{C}\DD\mathcal F(\bar p,\bar y,v)(p)\) gives \(\zeta=y\). Hence, along
the further subsequence produced by the extractions above,
\(\zeta_{k}=(\bar z_{k}-\bar y-t_{k}v)/t_{k}^{2}\to y\) and
\(c_{k}/t_{k}^{2}=y_{k}-\zeta_{k}\to0\).
The extraction argument just carried out applies to any
subsequence of the initial \(\{t_{k}\}\), and in every case forces the
accumulation point of \(\zeta_{k}\) to equal the fixed
\(y\in\Min_{C}\DD\mathcal F(\bar p,\bar y,v)(p)\); hence \(y\) is the unique
accumulation point of \(\{\zeta_{k}\}\). The subsequence principle
therefore gives \(\zeta_{k}\to y\) for the whole given sequence:
\[
   \frac{\bar z_{k}-\bar y-t_{k}v}{t_{k}^{2}}\to y,
   \qquad
   \bar z_{k}\in\Phi(\bar p+t_{k}p).
\]
As \(t_{k}\downarrow0\) was the arbitrary sequence fixed in Step~2,
this gives \(y\in\DD_{\ell}\Phi(\bar p,\bar y,v)(p)\), hence
\(\Min_{C}\DD\mathcal F(\bar p,\bar y,v)(p)\subseteq
\DD_{\ell}\Phi(\bar p,\bar y,v)(p)\). With
\(\DD\Phi\subseteq\Min_{C}\DD\mathcal F\) from Step~2 and the universal
\(\DD_{\ell}\Phi\subseteq\DD\Phi\), we obtain
\(\DD_{\ell}\Phi=\DD\Phi=\Min_{C}\DD\mathcal F(\bar p,\bar y,v)(p)\). Thus
\(\Phi\) is second-order semi-derivable at \((\bar p,\bar y)\) in direction
\((p,v)\), and the formula holds.

\medskip
Finally, \(\DD\mathcal F=\mathcal U\) from Step~1 and the explicit
form of \(\DD H\) in Proposition~\ref{prop:DDH-str} turn the identity
\(\DD\Phi(\bar p,\bar y,v)(p)=\Min_{C}\DD\mathcal F(\bar p,\bar y,v)(p)\) into the
stated formula \eqref{eq:DDPhi-str}.
\end{proof}

\begin{remark}{Objective-aware reading and polyhedral specialization of
\((A_4)\) and \((A_6)\)}{B1B2-reading}
Conditions \((A_4)\) and \((A_6)\) exclude only feasible
directions whose objective images lie in \(-K\). Nonzero feasible
directions along positive-dimensional efficient faces remain admissible
when they are strict trade-offs. In the polyhedral case,
\[
 T''(\Omega,\bar x,u)=T\bigl(T(\Omega,\bar x),u\bigr)
\]
by \cite[Prop.~3.34]{BonnansShapiro2000}; thus \((A_6)\) reduces to a
finite homogeneous linear system on reduced active sets, intersected
with \(\nabla_xf\,\cdot\in-K\).
\end{remark}

\section[Second-order sensitivity of S]{\texorpdfstring{{Second-order sensitivity of the efficient solution map \(S\)}}{Second-order sensitivity of S}}\label{sec:effsol}

This section lifts the value-level second-order theory of
\(\Phi\) to the decision map \(S\). Fix
\((\bar p,\bar x)\in\gph S\), set
\(\bar y=f(\bar p,\bar x)\in\Phi(\bar p)\), and let
\((p,u)\in P\times X\) with \(u\in DS(\bar p,\bar x)(p)\). We write
\begin{equation}\label{eq:vbar}
    \bar v := \nabla f(\bar p,\bar x)(p,u)\in Y
\end{equation}
for the induced first-order image direction. The main result is
the formula for \(\DD S(\bar p,\bar x,u)(p)\) in
Theorem~\ref{thm:DDS-semideriv}. The section first works in the abstract
inclusion model \(x\in H(p)\). Section~\ref{sec:paramsys} gives the
structured specialization \(H(p)=\{x\in\Omega:g(p,x)\in D\}\).

\subsection[First-order compatibility]{\texorpdfstring{{First-order compatibility}}{First-order compatibility}}\label{subsec:compat1}

We first record the first-order compatibility inherited from
the definition of \(S\): if \(x\in S(p)\), then \(x\in H(p)\) and
\(f(p,x)\in\Phi(p)\).

\begin{proposition}{First-order compatibility for \(S\)}{compat1}
Let \((\bar p,\bar x)\in\gph S\), \(\bar y=f(\bar p,\bar x)\in\Phi(\bar p)\),
and \(p\in P\). Assume that \(f\) is continuously differentiable at
\((\bar p,\bar x)\). Then, for every \(u\in DS(\bar p,\bar x)(p)\),
\begin{equation}\label{eq:compat1}
    u\in DH(\bar p,\bar x)(p)\qquad\text{and}\qquad
    \nabla f(\bar p,\bar x)(p,u)\in D\Phi(\bar p,\bar y)(p).
\end{equation}
\end{proposition}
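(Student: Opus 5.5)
The plan is to unwind the definition of the upper Dini derivative \(DS(\bar p,\bar x)(p)\) and push the realizing sequence through the two defining properties of \(S\). Fix \(u\in DS(\bar p,\bar x)(p)\). By Definition~\ref{def:Dini}(i) there are \(t_k\downarrow0\) and \(u_k\to u\) with
\[
 x_k:=\bar x+t_ku_k\in S(\bar p+t_kp)\qquad\text{for all }k\in\N.
\]

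\emph{First inclusion.} Since \(S(q)\subset H(q)\) for every \(q\) by \eqref{eq:Sdef}, the same sequence gives \(\bar x+t_ku_k\in H(\bar p+t_kp)\). Hence \(u\in\Limsup_{t\downarrow0}(H(\bar p+tp)-\bar x)/t=DH(\bar p,\bar x)(p)\). This is just monotonicity of the upper Dini derivative under graph inclusion, and it needs no smoothness.

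\emph{Second inclusion.} Set \(y_k:=f(\bar p+t_kp,x_k)\). By definition of \(S\), \(y_k\in\Phi(\bar p+t_kp)\). Since \(f\) is continuously differentiable at \((\bar p,\bar x)\) (Fréchet differentiability there suffices), and \((t_kp,t_ku_k)\to0\) with \((t_kp,t_ku_k)/t_k=(p,u_k)\) bounded, we get
\[
 \frac{y_k-\bar y}{t_k}
 =\nabla f(\bar p,\bar x)(p,u_k)+\frac{o\bigl(t_k\|(p,u_k)\|\bigr)}{t_k}
 \longrightarrow\nabla f(\bar p,\bar x)(p,u).
\]
So \(\bar y+t_kv_k\in\Phi(\bar p+t_kp)\) with \(v_k\to\nabla f(\bar p,\bar x)(p,u)\), which is exactly \(\nabla f(\bar p,\bar x)(p,u)\in D\Phi(\bar p,\bar y)(p)\).

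There is no real obstacle here. The only point needing care is that the remainder estimate uses differentiability along a sequence whose direction \(u_k\) varies, not along a fixed ray. This is why a Fréchet (\(C^1\)) hypothesis is imposed rather than mere directional differentiability. Note also that only the upper derivatives are used; the statement does not claim the analogous lower (\(D_\ell\)) inclusions. Those would follow by the same argument with an arbitrary sequence \(t_k\).
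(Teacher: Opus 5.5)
Your proposal is correct and follows essentially the same route as the paper's proof: you take a realizing sequence \(\bar x+t_ku_k\in S(\bar p+t_kp)\), use \(S\subset H\) for the first inclusion, and apply a first-order expansion of \(f\) along the varying directions \((p,u_k)\) for the second. Your remark that Fr\'echet differentiability at \((\bar p,\bar x)\) already suffices is a valid minor sharpening of the \(C^1\) hypothesis the paper invokes.
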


\begin{proof}
By definition of the first-order upper Dini derivative of \(S\), there
exist \(t_{k}\downarrow 0\) and \(u_{k}\to u\) such that
\(\bar x+t_{k}u_{k}\in S(\bar p+t_{k}p)\) for all \(k\in\N\). By the
definition of \(S\) in \eqref{eq:Sdef}, this is equivalent to
\begin{align}
    \bar x+t_{k}u_{k} &\in H(\bar p+t_{k}p),\label{eq:c1-H}\\
    f(\bar p+t_{k}p,\bar x+t_{k}u_{k}) &\in \Phi(\bar p+t_{k}p).\label{eq:c1-Phi}
\end{align}
From \eqref{eq:c1-H}, \((\bar x+t_{k}u_{k}-\bar x)/t_{k}=u_{k}\to u\)
witnesses \(u\in DH(\bar p,\bar x)(p)\).

Set \(y_{k}:=f(\bar p+t_{k}p,\bar x+t_{k}u_{k})\). Continuous
differentiability of \(f\) gives the first-order expansion
\[
    y_{k}=\bar y+t_{k}\nabla f(\bar p,\bar x)(p,u_{k})+o(t_{k})
         =\bar y+t_{k}\nabla f(\bar p,\bar x)(p,u)+o(t_{k}).
\]
Hence \((y_{k}-\bar y)/t_{k}\to\nabla f(\bar p,\bar x)(p,u)\). Since
\(y_{k}\in\Phi(\bar p+t_{k}p)\) by \eqref{eq:c1-Phi},
\(\nabla f(\bar p,\bar x)(p,u)\in D\Phi(\bar p,\bar y)(p)\), which is the
second part of \eqref{eq:compat1}.
\end{proof}

\begin{proposition}{First-order inner estimate for \(S\) under \textup{(VDB)}}{compat1-inner}
Let \((\bar p,\bar x)\in\gph S\), \(\bar y=f(\bar p,\bar x)\in\Phi(\bar p)\),
and \(p\in P\). Assume \textup{(i)} \(f\) is continuously differentiable
at \((\bar p,\bar x)\); \textup{(ii)} \(H\) is semi-derivable at
\((\bar p,\bar x)\) in direction \(p\); \textup{(iii)} \(\Phi\) is
semi-derivable at \((\bar p,\bar y)\) in direction \(p\); and
\textup{(iv)} \((VDB)\) holds at \((\bar p,\bar x)\). Then every
\(u\in DH(\bar p,\bar x)(p)\) with
\(\nabla f(\bar p,\bar x)(p,u)\in D\Phi(\bar p,\bar y)(p)\) satisfies
\(u\in D_{\ell}S(\bar p,\bar x)(p)\).
\end{proposition}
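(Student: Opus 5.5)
Fix an arbitrary sequence \(t_k\downarrow0\). The aim is to produce \(u_k\to u\) with \(\bar x+t_ku_k\in S(\bar p+t_kp)\) for all large \(k\); this gives \(u\in D_\ell S(\bar p,\bar x)(p)\). The construction has two stages. First realize \(u\) as a feasible arc. Then project that arc onto \(S\) using (VDB). The projection error is controlled by the value residual, and a first-order expansion of \(f\) shows that this residual is \(o(t_k)\).

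\emph{Step 1 (feasible arc).} By (ii), \(DH(\bar p,\bar x)(p)=D_\ell H(\bar p,\bar x)(p)\). Hence there are \(\hat u_k\to u\) with \(x_k:=\bar x+t_k\hat u_k\in H(\bar p+t_kp)\). By (i),
\[
 f(\bar p+t_kp,x_k)=\bar y+t_k\bar v+o(t_k),\qquad \bar v:=\nabla f(\bar p,\bar x)(p,u).
\]

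\emph{Step 2 (value residual).} By (iii), \(\bar v\in D\Phi(\bar p,\bar y)(p)=D_\ell\Phi(\bar p,\bar y)(p)\). Along the same sequence \(t_k\) there are therefore \(v_k\to\bar v\) with \(\bar y+t_kv_k\in\Phi(\bar p+t_kp)\). Consequently
\[
 \dist\bigl(f(\bar p+t_kp,x_k),\Phi(\bar p+t_kp)\bigr)\le t_k\|\bar v-v_k\|+o(t_k)=o(t_k).
\]

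\emph{Step 3 (projection through (VDB)).} For large \(k\) we have \(\bar p+t_kp\in V\) and \(x_k\in H(\bar p+t_kp)\cap W\). Then \eqref{eq:VDB} gives
\[
 \dist\bigl(x_k,S(\bar p+t_kp)\cap W\bigr)\le\kappa\,o(t_k).
\]
This bound is finite, so the localized set \(S(\bar p+t_kp)\cap W\) is nonempty; no separate nonemptiness hypothesis is needed. Pick \(\xi_k\in S(\bar p+t_kp)\cap W\) with \(\|x_k-\xi_k\|\le\dist(x_k,S(\bar p+t_kp)\cap W)+t_k^2\). Set \(u_k:=(\xi_k-\bar x)/t_k\). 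Then \(\|u_k-\hat u_k\|=o(1)\), so \(u_k\to u\), and \(\bar x+t_ku_k=\xi_k\in S(\bar p+t_kp)\). Since \(t_k\) was arbitrary, this is exactly the inner-limit condition defining \(D_\ell S\).

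The only delicate point is Step 2. The approximating sequence in \(\Phi(\bar p+t_kp)\) must live on the same prescribed sequence \(t_k\) as the feasible arc, and this is precisely why semi-derivability of both \(H\) and \(\Phi\) is assumed, rather than only upper derivatives. Beyond that, one only needs to check that \(x_k\) eventually enters the neighborhoods \(V\) and \(W\). This holds because \(x_k\to\bar x\), and it is also where the linear rate of (VDB) is essential: it turns an \(o(t_k)\) value residual into an \(o(t_k)\) decision correction.
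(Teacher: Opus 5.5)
Your proposal is correct and matches the paper's proof essentially step for step. The paper also fixes an arbitrary \(t_k\downarrow0\) and uses semi-derivability of \(H\) and \(\Phi\) along that same sequence to get an \(o(t_k)\) value residual. It then applies (VDB) with an approximate projection, with error \(\varepsilon_k=o(t_k)\) where you take \(t_k^2\), to land in \(S(\bar p+t_kp)\cap W\) with an \(o(t_k)\) correction.
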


\begin{proof}
Fix such a \(u\), set \(\bar v:=\nabla f(\bar p,\bar x)(p,u)\), and let
\(t_{k}\downarrow 0\) be arbitrary. Let \(V,W,\kappa\) be the data of
\((VDB)\). Since \(\Phi\) is semi-derivable at \((\bar p,\bar y)\) in
direction \(p\), \(\bar v\in D_{\ell}\Phi(\bar p,\bar y)(p)\), so there
exist \(y_{k}\in\Phi(\bar p+t_{k}p)\) with
\((y_{k}-\bar y)/t_{k}\to\bar v\). Since \(H\) is semi-derivable at
\((\bar p,\bar x)\) in direction \(p\),
\(u\in D_{\ell}H(\bar p,\bar x)(p)\), so there exist \(u_{k}\to u\) with
\(\hat x_{k}:=\bar x+t_{k}u_{k}\in H(\bar p+t_{k}p)\). The \(C^{1}\)
expansion of \(f\) gives
\[
   f(\bar p+t_{k}p,\hat x_{k})
   =\bar y+t_{k}\,\nabla f(\bar p,\bar x)(p,u_{k})+o(t_{k})
   =\bar y+t_{k}\bar v+o(t_{k}),
\]
so, comparing with \(y_{k}=\bar y+t_{k}\bar v+o(t_{k})\),
\[
   \dist\bigl(f(\bar p+t_{k}p,\hat x_{k}),\Phi(\bar p+t_{k}p)\bigr)
   \le\bigl\|f(\bar p+t_{k}p,\hat x_{k})-y_{k}\bigr\|=o(t_{k}).
\]
For large \(k\), \(\hat x_{k}\in H(\bar p+t_{k}p)\cap W\), so \((VDB)\)
applies at \((\bar p+t_{k}p,\hat x_{k})\):
\[
   \dist\bigl(\hat x_{k},S(\bar p+t_{k}p)\cap W\bigr)
   \le\kappa\,\dist\bigl(f(\bar p+t_{k}p,\hat x_{k}),\Phi(\bar p+t_{k}p)\bigr)
   =o(t_{k}).
\]
Thus \(S(\bar p+t_{k}p)\cap W\neq\emptyset\) for large \(k\). Since
this set need not be closed, select approximately \(x_{k}\in
S(\bar p+t_{k}p)\cap W\) with
\(\|x_{k}-\hat x_{k}\|\le\dist(\hat x_{k},S(\bar p+t_{k}p)\cap W)+\varepsilon_{k}\),
where \(\varepsilon_{k}=o(t_{k})\). Then \(\|x_{k}-\hat x_{k}\|=o(t_{k})\)
and hence \((x_{k}-\bar x)/t_{k}=u_{k}+o(1)\to u\). Since \(t_{k}\downarrow 0\)
was arbitrary, \(u\in D_{\ell}S(\bar p,\bar x)(p)\).
\end{proof}

\begin{theorem}{First-order semi-derivability and Dini formula for \(S\) under \textup{(VDB)}}{DS-formula}
Under hypotheses \textup{(i)--(iv)} of Proposition~\ref{prop:compat1-inner},
\(S\) is semi-derivable at \((\bar p,\bar x)\) in direction \(p\), and
\begin{equation}\label{eq:DS-formula}
   DS(\bar p,\bar x)(p)
   =\bigl\{u\in DH(\bar p,\bar x)(p)\,:\,
      \nabla f(\bar p,\bar x)(p,u)\in D\Phi(\bar p,\bar y)(p)\bigr\}.
\end{equation}
\end{theorem}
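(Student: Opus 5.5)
The plan is a sandwich of three inclusions:
\[
DS(\bar p,\bar x)(p)\subset\mathcal R\subset D_\ell S(\bar p,\bar x)(p)\subset DS(\bar p,\bar x)(p),
\]
where \(\mathcal R\) denotes the right-hand side of \eqref{eq:DS-formula}. Once the chain is closed, all three sets coincide. This gives both the formula and semi-derivability of \(S\) at \((\bar p,\bar x)\) in direction \(p\).

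For the first inclusion \(DS\subset\mathcal R\), I would invoke Proposition~\ref{prop:compat1} directly. Its only hypothesis is continuous differentiability of \(f\) at \((\bar p,\bar x)\), which is (i). It yields, for every \(u\in DS(\bar p,\bar x)(p)\), both \(u\in DH(\bar p,\bar x)(p)\) and \(\nabla f(\bar p,\bar x)(p,u)\in D\Phi(\bar p,\bar y)(p)\). That is exactly membership in \(\mathcal R\).

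For the middle inclusion \(\mathcal R\subset D_\ell S\), I would cite Proposition~\ref{prop:compat1-inner}. It uses all four hypotheses (i)--(iv), and its conclusion is precisely that every element of \(\mathcal R\) lies in \(D_\ell S(\bar p,\bar x)(p)\).

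The last inclusion \(D_\ell S\subset DS\) is the universal relation \(\Liminf\subset\Limsup\) for Painlev\'e--Kuratowski limits, applied to the difference quotients \((S(\bar p+tp)-\bar x)/t\).

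Chaining the three inclusions forces \(DS=\mathcal R=D_\ell S\), which proves \eqref{eq:DS-formula} together with the semi-derivability of \(S\). The only step needing care is confirming that the set \(\mathcal R\) appearing in Proposition~\ref{prop:compat1} matches the one in Proposition~\ref{prop:compat1-inner}. It does: in the latter, semi-derivability of \(H\) and \(\Phi\) lets one replace \(DH\) and \(D\Phi\) by their lower versions, but the set is stated with the upper derivatives in both places. So I expect no genuine obstacle. The substantive work, namely the (VDB) lifting of the \(o(t_k)\) value residual to an \(o(t_k)\) decision correction, has already been carried out in Proposition~\ref{prop:compat1-inner}.
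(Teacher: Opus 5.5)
Your proposal is correct and matches the paper's proof exactly: Proposition~\ref{prop:compat1} gives \(DS(\bar p,\bar x)(p)\subseteq\mathcal R\), Proposition~\ref{prop:compat1-inner} gives \(\mathcal R\subseteq D_{\ell}S(\bar p,\bar x)(p)\), and the universal inclusion \(D_{\ell}S\subseteq DS\) closes the chain. Nothing further is needed.
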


\begin{proof}
Let \(\mathcal W_{1}\) denote the right-hand side of \eqref{eq:DS-formula}.
Proposition~\ref{prop:compat1} gives
\(DS(\bar p,\bar x)(p)\subseteq\mathcal W_{1}\), and
Proposition~\ref{prop:compat1-inner} gives
\(\mathcal W_{1}\subseteq D_{\ell}S(\bar p,\bar x)(p)\). With the universal
inclusion \(D_{\ell}S\subseteq DS\),
\[
   \mathcal W_{1}\subseteq D_{\ell}S(\bar p,\bar x)(p)
   \subseteq DS(\bar p,\bar x)(p)\subseteq\mathcal W_{1},
\]
so \(D_{\ell}S=DS=\mathcal W_{1}\). This is first-order semi-derivability
of \(S\) at \((\bar p,\bar x)\) in direction \(p\), together with
\eqref{eq:DS-formula}.
\end{proof}

\begin{example}{A set-valued \(S\) satisfying \textup{(VDB)} but outside the scope of \cite[Cor.~22]{Luc2018SIAM}}{setvalued-S}
Let \(P=\R\), \(x=(x_{1},x_{2})\in\R^{2}\), \(C=\R^{2}_{+}\), and
\[
   f(p,x)=(x_{1},x_{1}),\qquad
   H(p)=\{x\in\R^{2}\,:\,x_{1}\ge p,\ 0\le x_{2}\le 1\}.
\]
The image \(f(H(p))=\{(s,s):s\ge p\}\) has the single, uniformly Henig
efficient value \(\bar y=(p,p)\), so \(\Phi(p)=\{(p,p)\}\), whereas the
efficient solution map
\[
   S(p)=\{p\}\times[0,1]
\]
is \emph{set-valued}. Fix \(\bar p\) and \(\bar x=(\bar p,\tfrac12)\). Two
hypotheses of \cite[Cor.~22]{Luc2018SIAM} fail here: the fiber
\(\Delta_{0}(\bar p,\bar y)=\{\bar p\}\times[0,1]\) is not a singleton
\textup{(}strict efficiency fails\textup{)}, and
\(\nabla_{x}f=\left(\begin{smallmatrix}1&0\\1&0\end{smallmatrix}\right)\)
is not injective \textup{(}kernel \(\R(0,1)\)\textup{)}. Nevertheless
\textup{(VDB)} holds: for \(x\in H(p)\) near \(\bar x\),
\(\dist(x,S(p))=|x_{1}-p|\) while
\(\dist(f(p,x),\Phi(p))=\sqrt2\,|x_{1}-p|\), so \eqref{eq:VDB} holds with
\(\kappa=1/\sqrt2\).
Hence Theorem~\ref{thm:DS-formula} applies. With
\(DH(\bar p,\bar x)(p)=\{(a,b):a\ge p,\ b\in\R\}\),
\(D\Phi(\bar p,\bar y)(p)=\{(p,p)\}\), and
\(\nabla f(\bar p,\bar x)(p,(a,b))=(a,a)\), formula \eqref{eq:DS-formula}
gives
\[
   DS(\bar p,\bar x)(p)=\{(a,b):a=p,\ b\in\R\}=\{p\}\times\R,
\]
matching the direct computation from \(S(p)=\{p\}\times[0,1]\).
Thus \textup{(VDB)} may hold even when the objective derivative cannot
identify a decision direction uniquely.
\end{example}

\subsection[Second-order outer estimate]{\texorpdfstring{{Second-order outer estimate}}{Second-order outer estimate}}\label{subsec:outer2}

The outer estimate is the necessary chain rule. Every
second-order direction of \(S\) is a second-order direction of \(H\), and
its image under the second-order expansion of \(f\) is a second-order
direction of \(\Phi\).

\begin{proposition}{Second-order outer estimate for \(S\)}{DDS-outer}
Let \((\bar p,\bar x)\in\gph S\), \(\bar y=f(\bar p,\bar x)\in\Phi(\bar p)\),
and \((p,u)\in P\times X\) with \(u\in DS(\bar p,\bar x)(p)\). Define
\(\bar v\) by \eqref{eq:vbar}. Assume that \(f\) is \(C^{1}\) and
second-order semi-derivable at \((\bar p,\bar x)\) in direction \((p,u)\).
Then, for every \(w\in\DD S(\bar p,\bar x,u)(p)\),
\begin{equation}\label{eq:DDS-outer}
    w\in\DD H(\bar p,\bar x,u)(p),
\end{equation}
and
\begin{equation}\label{eq:DDS-outer-image}
    \nabla_{x}f(\bar p,\bar x)\,w+d^{2}f(\bar p,\bar x)(p,u)
    \in \DD\Phi(\bar p,\bar y,\bar v)(p).
\end{equation}
\end{proposition}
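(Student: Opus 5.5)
The plan is to unfold the definition of the upper Dini derivative along a single realizing sequence and read off both memberships from that sequence. Fix \(w\in\DD S(\bar p,\bar x,u)(p)\). By Definition~\ref{def:Dini}(ii) there exist \(t_k\downarrow0\) and \(w_k\to w\) with
\[
 x_k:=\bar x+t_ku+t_k^2w_k\in S(\bar p+t_kp)\qquad(k\in\N).
\]
The definition \eqref{eq:Sdef} of \(S\) splits this into two facts: \(x_k\in H(\bar p+t_kp)\) and \(f(\bar p+t_kp,x_k)\in\Phi(\bar p+t_kp)\). We handle them separately.

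\emph{First membership.} The first fact says \((x_k-\bar x-t_ku)/t_k^2=w_k\to w\) with \(x_k\in H(\bar p+t_kp)\). This is exactly the statement \(w\in\DD H(\bar p,\bar x,u)(p)\), which gives \eqref{eq:DDS-outer} without any further argument.

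\emph{Expansion of \(f\).} For the second fact, we expand \(f\) along \(x_k\) exactly as in \eqref{eq:g-mvt}--\eqref{eq:Taylor-g-1} in the proof of Proposition~\ref{prop:DDH-str}, with \(g\) replaced by \(f\). The \(C^1\) mean-value theorem in the second argument gives
\[
 f(\bar p+t_kp,x_k)=f(\bar p+t_kp,\bar x+t_ku)+t_k^2\nabla_xf(\bar p+t_kp,\bar x+t_ku)w_k+r_k,\qquad r_k=o(t_k^2).
\]
The remainder is \(o(t_k^2)\) because \(\nabla_xf\) is continuous at \((\bar p,\bar x)\) and \(\{w_k\}\) is bounded. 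Second-order semi-derivability of \(f\) in direction \((p,u)\) then gives
\[
 f(\bar p+t_kp,\bar x+t_ku)=\bar y+t_k\bar v+t_k^2\zeta_k,\qquad \zeta_k\to d^2f(\bar p,\bar x)(p,u),
\]
where \(\bar v=\nabla f(\bar p,\bar x)(p,u)\) is as in \eqref{eq:vbar}.

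\emph{Second membership.} Combining the two expansions,
\[
 \frac{f(\bar p+t_kp,x_k)-\bar y-t_k\bar v}{t_k^2}\longrightarrow\nabla_xf(\bar p,\bar x)w+d^2f(\bar p,\bar x)(p,u).
\]
The points \(f(\bar p+t_kp,x_k)\) lie in \(\Phi(\bar p+t_kp)\). So, by definition of the second-order upper Dini derivative of \(\Phi\) at \((\bar p,\bar y)\) in direction \((p,\bar v)\), the limit belongs to \(\DD\Phi(\bar p,\bar y,\bar v)(p)\). This is \eqref{eq:DDS-outer-image}.

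The only step that needs care is the remainder estimate. It relies on \(C^1\) smoothness holding in a neighborhood of \((\bar p,\bar x)\), so that the mean-value bound applies on the segment from \(\bar x+t_ku\) to \(x_k\) for large \(k\). Otherwise the argument is a direct unwinding of definitions, and no compactness, (VDB), or Henig hypothesis is needed.
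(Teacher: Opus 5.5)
Your proposal is correct and follows essentially the same route as the paper's proof. Like the paper, you unfold a realizing sequence \(x_k=\bar x+t_ku+t_k^2w_k\in S(\bar p+t_kp)\), read off \(w\in\DD H(\bar p,\bar x,u)(p)\) from \(x_k\in H(\bar p+t_kp)\), and obtain the image in \(\DD\Phi(\bar p,\bar y,\bar v)(p)\) by combining the \(C^1\) mean-value estimate on the segment \([\bar x+t_ku,x_k]\) with second-order semi-derivability of \(f\) in direction \((p,u)\).
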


\begin{proof}
See Appendix~\ref{app:proofs} (proof of Proposition~\ref{prop:DDS-outer}).
\end{proof}

\subsection[Second-order semi-derivability of S]{\texorpdfstring{{Second-order semi-derivability of \(S\)}}{Second-order semi-derivability of S}}\label{subsec:semideriv2}

The outer estimate and the following converse argument give the full
second-order Dini formula. The converse is the only step that uses
\((VDB)\).

\begin{theorem}{Second-order semi-derivability of \(S\)}{DDS-semideriv}
Let \((\bar p,\bar x)\in\gph S\), \(\bar y=f(\bar p,\bar x)\in\Phi(\bar p)\),
and \((p,u)\in P\times X\) with \(u\in DS(\bar p,\bar x)(p)\). Define
\(\bar v\) by \eqref{eq:vbar}. Assume:
\begin{enumerate}[label=\textup{(\roman*)}]
\item \(f\) is \(C^{1}\) and second-order semi-derivable at
      \((\bar p,\bar x)\) in direction \((p,u)\);
\item \(H\) is second-order semi-derivable at \((\bar p,\bar x)\) in
      direction \((p,u)\);
\item \(\Phi\) is second-order semi-derivable at \((\bar p,\bar y)\) in
      direction \((p,\bar v)\);
\item \((VDB)\) holds at \((\bar p,\bar x)\).
\end{enumerate}
Then \(S\) is second-order semi-derivable at \((\bar p,\bar x)\) in
direction \((p,u)\), and
\begin{equation}\label{eq:DDS-formula}
    \DD S(\bar p,\bar x,u)(p)
    = \left\{w\in\DD H(\bar p,\bar x,u)(p)\,:\,
       \begin{array}{l}
          \nabla_{x}f(\bar p,\bar x)w+d^{2}f(\bar p,\bar x)(p,u)\\
          \quad\in \DD\Phi(\bar p,\bar y,\bar v)(p)
       \end{array}\right\}.
\end{equation}
\end{theorem}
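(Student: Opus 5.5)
The proof will mirror the first-order argument of Theorem~\ref{thm:DS-formula}, one order higher. Let \(\mathcal W_2\) denote the right-hand side of \eqref{eq:DDS-formula}. The plan is to establish the chain
\[
 \mathcal W_2\subseteq \DD_\ell S(\bar p,\bar x,u)(p)\subseteq \DD S(\bar p,\bar x,u)(p)\subseteq\mathcal W_2 .
\]
The last inclusion is exactly Proposition~\ref{prop:DDS-outer}, which uses only hypothesis (i). The middle inclusion is universal. So everything reduces to the inner estimate \(\mathcal W_2\subseteq\DD_\ell S\), and this is the only place where (ii)--(iv) enter.

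For the inner estimate, fix \(w\in\mathcal W_2\) and an arbitrary sequence \(t_k\downarrow0\).

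\emph{Step 1 (feasible arc).} By (ii), \(w\in\DD H=\DD_\ell H\). Hence there are \(w_k\to w\) with
\[
\hat x_k:=\bar x+t_ku+t_k^2w_k\in H(\bar p+t_kp).
\]

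\emph{Step 2 (expansion of \(f\)).} I will expand \(f\) along this arc exactly as in \eqref{eq:g-mvt}--\eqref{eq:Taylor-g-1} of Proposition~\ref{prop:DDH-str}, with \(g\) replaced by \(f\). The \(C^1\) mean-value theorem in \(x\), together with second-order semi-derivability of \(f\) in direction \((p,u)\), gives
\[
f(\bar p+t_kp,\hat x_k)=\bar y+t_k\bar v+t_k^2\bigl(\nabla_xf(\bar p,\bar x)w+d^2f(\bar p,\bar x)(p,u)\bigr)+o(t_k^2).
\]

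\emph{Step 3 (matching efficient values).} The bracketed vector lies in \(\DD\Phi(\bar p,\bar y,\bar v)(p)\), which equals \(\DD_\ell\Phi\) by (iii). So along the same \(t_k\) there are
\[
y_k=\bar y+t_k\bar v+t_k^2\eta_k\in\Phi(\bar p+t_kp),\qquad \eta_k\to\nabla_xf(\bar p,\bar x)w+d^2f(\bar p,\bar x)(p,u).
\]
It follows that \(\dist(f(\bar p+t_kp,\hat x_k),\Phi(\bar p+t_kp))\le\|f(\bar p+t_kp,\hat x_k)-y_k\|=o(t_k^2)\).

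\emph{Step 4 (VDB correction).} Since \(\hat x_k\to\bar x\), for large \(k\) we have \(\hat x_k\in W\) and \(\bar p+t_kp\in V\). Then (iv) gives
\[
\dist(\hat x_k,S(\bar p+t_kp)\cap W)\le\kappa\, o(t_k^2).
\]
In particular the localized set is nonempty. Since it need not be closed, I will pick \(x_k\in S(\bar p+t_kp)\cap W\) with \(\|x_k-\hat x_k\|\le\dist(\hat x_k,S(\bar p+t_kp)\cap W)+t_k^3\). Then \(\|x_k-\hat x_k\|=o(t_k^2)\). Writing \(x_k=\bar x+t_ku+t_k^2\tilde w_k\), we get \(\tilde w_k=w_k+o(1)\to w\). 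Because \(t_k\) was arbitrary, this gives \(w\in\DD_\ell S(\bar p,\bar x,u)(p)\).

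The step needing the most care is Step 3. The first-order term of \(f\) along the arc must be exactly \(t_k\bar v\), not merely \(\bar v+o(1)\), so that the \(o(t_k^2)\) matching with \(\Phi\) is legitimate. This holds because the arc has the exact form \(\bar x+t_ku+t_k^2w_k\), with \(u\) fixed, and \(\bar v=\nabla f(\bar p,\bar x)(p,u)\) by \eqref{eq:vbar}. The second-order semi-derivability of \(f\) then absorbs the \(t_k^2\) terms, with the mean-value remainder controlled as in Proposition~\ref{prop:DDH-str}. It is the linear rate in (VDB) (cf.\ Remark~\ref{rem:VDB-reading}) that turns this \(o(t_k^2)\) value residual into an \(o(t_k^2)\) decision correction.
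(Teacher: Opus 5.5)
Your proposal is correct and follows the paper's proof essentially step for step. The outer inclusion comes from Proposition~\ref{prop:DDS-outer}, and the inner estimate $\mathcal W\subset\DD_\ell S$ is obtained exactly as you describe: a realizing arc $\bar x+t_ku+t_k^2w_k$ from $\DD_\ell H$, a matching arc in $\Phi(\bar p+t_kp)$ from $\DD_\ell\Phi$, the resulting $o(t_k^2)$ value residual, and then the linear (VDB) bound with an approximate nearest-point selection at tolerance $o(t_k^2)$ (you take $t_k^3$, the paper a generic $\varepsilon_k=o(t_k^2)$).
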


\begin{proof}
Let \(\mathcal W\) denote the right-hand side of \eqref{eq:DDS-formula}.
Proposition~\ref{prop:DDS-outer} gives
\(\DD S(\bar p,\bar x,u)(p)\subset\mathcal W\). For the reverse
inclusion, fix \(w\in\mathcal W\) and an arbitrary sequence
\(t_k\downarrow0\). The second-order semi-derivability of \(H\) gives
\(w_k\to w\) such that
\[
 x_k:=\bar x+t_ku+t_k^2w_k\in H(\bar p+t_kp).
\]
Set \(z:=d^2f(\bar p,\bar x)(p,u)\) and
\(\eta:=\nabla_xf(\bar p,\bar x)w+z\). Since
\(\eta\in\DD\Phi(\bar p,\bar y,\bar v)(p)\) and \(\Phi\) is
second-order semi-derivable, there are \(\xi_k\to\eta\) with
\[
 \tilde y_k:=\bar y+t_k\bar v+t_k^2\xi_k
 \in\Phi(\bar p+t_kp).
\]
The \(C^1\) mean-value theorem and second-order semi-derivability of \(f\)
give \(z_k\to z\) such that
\[
 f(\bar p+t_kp,x_k)
 =\bar y+t_k\bar v+t_k^2
   \bigl(\nabla_xf(\bar p,\bar x)w_k+z_k\bigr)+o(t_k^2).
\]
The coefficient in parentheses converges to \(\eta\). Hence
\[
 \dist\bigl(f(\bar p+t_kp,x_k),\Phi(\bar p+t_kp)\bigr)
 \leqslant\|f(\bar p+t_kp,x_k)-\tilde y_k\|=o(t_k^2).
\]
Let \(V,W_0,\kappa\) be the data in \((VDB)\). For all large \(k\),
\((\bar p+t_kp,x_k)\in V\times W_0\), and therefore
\[
 \dist\bigl(x_k,S(\bar p+t_kp)\cap W_0\bigr)=o(t_k^2).
\]
This distance is finite, so the localized solution set is nonempty. Choose
\(\varepsilon_k=o(t_k^2)\) and
\(\tilde x_k\in S(\bar p+t_kp)\cap W_0\) with
\[
 \|\tilde x_k-x_k\|
 \leqslant\dist\bigl(x_k,S(\bar p+t_kp)\cap W_0\bigr)+\varepsilon_k.
\]
Writing
\(\tilde x_k=\bar x+t_ku+t_k^2\tilde w_k\), we obtain
\[
 \|\tilde w_k-w_k\|
 \leqslant\frac{\dist(x_k,S(\bar p+t_kp)\cap W_0)+\varepsilon_k}{t_k^2}
 \longrightarrow0.
\]
Thus \(\tilde w_k\to w\), and the arbitrariness of \(\{t_k\}\) gives
\(w\in\DD_\ell S(\bar p,\bar x,u)(p)\). Consequently,
\[
 \mathcal W\subset\DD_\ell S(\bar p,\bar x,u)(p)
 \subset\DD S(\bar p,\bar x,u)(p)\subset\mathcal W.
\]
Hence \(\DD_\ell S=\DD S=\mathcal W\), which proves the theorem.
\end{proof}

The next example shows why \((VDB)\) is needed: without
it, the outer estimate can be strict and the right-hand side of
\eqref{eq:DDS-formula} can strictly contain \(\DD S\).

\begin{example}{Failure of the inner estimate without \((VDB)\)}{DDS-counter}
Let \(X=Y=\R^{2}\), \(P=\R\), and \(C=\R^{2}_{+}\). Take
\[
    H(p)=\{x\in\R^{2}\,:\,x_{1}+x_{2}\leqslant 2+p\},\qquad
    f(p,x)=\bigl(\phi(p,x),\phi(p,x)\bigr),
\]
where \(\phi(p,x):=(x_{1}-p)^{2}+(x_{2}-p^{2})^{2}\). Take
\(\bar p=0\), \(\bar x=(0,0)\). Direct minimization of
\(f(p,\cdot)\) over \(H(p)\) yields the unique \(C\)-minimum
\(x^{\ast}(p)=(p,p^{2})\) (the constraint \(x_{1}+x_{2}\leqslant 2+p\)
is inactive at \(\bar x\)) with \(\Phi(p)=\{(0,0)\}\) constant in \(p\).
Hence \(S(p)=\{(p,p^{2})\}\) is single-valued.

\smallskip
\emph{Failure of \((VDB)\).} For \(x\in H(p)\) near \(x^{\ast}(p)\),
\[
    \rho(p,x)=\dist(f(p,x),\Phi(p))=\sqrt 2\,\phi(p,x),
    \qquad
    \dist(x,S(p))=\|x-x^{\ast}(p)\|=\sqrt{\phi(p,x)},
\]
so the linear bound \(\dist(x,S(p))\leqslant\kappa\rho(p,x)\) demands
\(\sqrt{\phi}\leqslant\kappa\sqrt 2\,\phi\), which fails as
\(\phi\to 0\). The actual relation is the H\"older-\(\tfrac{1}{2}\)
bound \(\dist(x,S(p))=(2)^{-1/4}\rho(p,x)^{1/2}\), as predicted by
Remark~\ref{rem:quadgrowth}, since \(\nabla_{x}f(\bar p,\bar x)=0\).

\smallskip
\emph{Direct computation of \(\DD S\).} With \(\dot p=1\) and the
first-order direction \(u=(1,0)\in DS(\bar p,\bar x)(1)\), the unique
realizer of \(x^{\ast}(t)=\bar x+tu+t^{2}w_{t}\) is \(w_{t}=(0,1)\) for
all \(t\), so \(\DD S(\bar p,\bar x,u)(1)=\{(0,1)\}\), a singleton.

\smallskip
\emph{Formula \eqref{eq:DDS-formula} overpredicts.} The constraint
\(x_{1}+x_{2}\leqslant 2+p\) is inactive at \(\bar x\), so
\(\DD H(\bar p,\bar x,u)(1)=\R^{2}\). The marginal map is constant,
\(\Phi(p)\equiv\{(0,0)\}\), giving \(\bar v=0\) and
\(\DD\Phi(\bar p,\bar y,\bar v)(1)=\{0\}\). Compute
\(\nabla_{x}f(\bar p,\bar x)=(\nabla_{x}\phi,\nabla_{x}\phi)=0\) and
\(d^{2}f(\bar p,\bar x)(1,u)=0\). The condition in
\eqref{eq:DDS-formula} becomes \(0+0\in\{0\}\), trivially true for
\emph{every} \(w\in\R^{2}\). Hence the formula returns
\[
    \mathcal W=\DD H(\bar p,\bar x,u)(1)=\R^{2}\;\;\supsetneq\;\;\{(0,1)\}=\DD S(\bar p,\bar x,u)(1).
\]
The strict inclusion is precisely the failure of (VDB) at
\((\bar p,\bar x)\). The example illustrates that the converse inclusion
in the proof of
Theorem~\ref{thm:DDS-semideriv} (set equality) cannot be obtained from
the outer estimate alone. \(\square\)
\end{example}

\section[Second-order sensitivity of S in the structured setting]{\texorpdfstring{{Second-order sensitivity of \(S\) in the structured setting}}{Second-order sensitivity of S in the structured setting}}%
\label{sec:paramsys}

{This section specializes Theorem~\ref{thm:DDS-semideriv} to
the structured feasible map}
\begin{equation}\label{eq:Hstruct}
    H(p) = \{x\in \Omega\,:\,g(p,x)\in D\},
\end{equation}
{where \(\Omega\subset X\) and \(D\subset\R^{q}\) are closed. The
inputs from Section~\ref{sec:FPhi-struct} are
Proposition~\ref{prop:DDH-str}, which gives \(\DD H\), and
Theorem~\ref{thm:DDPhi-str}, which gives \(\DD\Phi\). Combining these with
Theorem~\ref{thm:DDS-semideriv} yields the structured formula for
\(\DD S\).}

\begin{theorem}{Second-order semi-derivative of \(S\) in structured systems}{DDS-struct}
Let \((\bar p,\bar x)\in\gph S\), \(\bar y=f(\bar p,\bar x)\in\Phi(\bar p)\),
and \((p,u)\in P\times X\) with \(u\in DS(\bar p,\bar x)(p)\).
{Set \(\bar v:=\nabla f(\bar p,\bar x)(p,u)\).} Assume:

\begin{enumerate}[label=\textup{(\roman*)}]
\item \(\bar y\) is uniformly Henig efficient at \(\bar p\), and for some
      witnessing cone \(K\), conditions \((A_1)\), \((A_2)\), \((A_3)\),
      \((A_4)\), \((A_5)\), and \((A_6)\) of
      Subsection~\ref{subsec:sec4-setup} hold with \(v=\bar v\);
\item \((VDB)\) holds at \((\bar p,\bar x)\).
\end{enumerate}
Then \(S\) is second-order semi-derivable at \((\bar p,\bar x)\) in
direction \((p,u)\), and
\begin{equation}\label{eq:DDS-struct}
    \DD S(\bar p,\bar x,u)(p)
    = \left\{w\in T^{2}(\Omega,\bar x,u)\,:\,
       \begin{array}{l}
          \nabla_{x}g(\bar p,\bar x)w+d^{2}g(\bar p,\bar x)(p,u)\\
          \quad\in T^{2}(D,g(\bar p,\bar x),\nabla g(\bar p,\bar x)(p,u)),\\[2pt]
          \nabla_{x}f(\bar p,\bar x)w+d^{2}f(\bar p,\bar x)(p,u)\\
          \quad\in \DD\Phi(\bar p,\bar y,\bar v)(p)
       \end{array}\right\},
\end{equation}
where \(\DD\Phi(\bar p,\bar y,\bar v)(p)\) is given explicitly by
\eqref{eq:DDPhi-str}.
\end{theorem}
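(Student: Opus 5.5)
The plan is to verify the four hypotheses of Theorem~\ref{thm:DDS-semideriv} from assumptions (i)--(ii), and then substitute the explicit form of \(\DD H\) from Proposition~\ref{prop:DDH-str} into \eqref{eq:DDS-formula}. Hypothesis (iv) of Theorem~\ref{thm:DDS-semideriv} is (ii) verbatim.

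For hypothesis (i), note that \(\bar x\in\Delta_0(\bar p,\bar y)\) because \(\bar x\in H(\bar p)\) and \(f(\bar p,\bar x)=\bar y\). By \((A_2)\), \(f\) is \(C^1\) near \((\bar p,\bar x)\). Moreover, \(u\in\Delta_1(\bar p,\bar x,p,\bar v)\): Proposition~\ref{prop:compat1} gives \(u\in DH(\bar p,\bar x)(p)\), and \(\nabla f(\bar p,\bar x)(p,u)=\bar v\) holds by definition. Hence \((A_5)\) yields second-order semi-derivability of \(f\) (and of \(g\)) at \((\bar p,\bar x)\) in direction \((p,u)\).

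For hypothesis (ii), I will apply Proposition~\ref{prop:DDH-str} at \((\bar p,\bar x)\) in direction \((p,u)\). Its regularity requirements on \(\Omega\), \(D\), and \(g\) come from \((A_5)\) with this \(u\), and Robinson regularity comes from \((A_3)\) at \(\bar x\). The first-order compatibility \eqref{eq:DDH-str-fo-compat} still needs a short argument. From \(u\in DH(\bar p,\bar x)(p)\), pick \(t_k\downarrow0\) and \(u_k\to u\) with \(\bar x+t_ku_k\in\Omega\) and \(g(\bar p+t_kp,\bar x+t_ku_k)\in D\). This gives \(u\in T(\Omega,\bar x)\) directly. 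The \(C^1\) expansion of \(g\) then gives \(\nabla g(\bar p,\bar x)(p,u)\in T(D,g(\bar p,\bar x))\). Hence \(H\) is second-order semi-derivable, and \(\DD H(\bar p,\bar x,u)(p)\) is given by \eqref{eq:DDH-str}.

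For hypothesis (iii), I will invoke Theorem~\ref{thm:DDPhi-str} with \(v=\bar v\). It requires \((\bar p,\bar v)\in\gph D\Phi(\bar p,\bar y)\), which is the second part of Proposition~\ref{prop:compat1}. It also requires uniform Henig efficiency with a witnessing cone \(K\) and \((A_1)\)--\((A_6)\), which are assumed in (i). The theorem then gives second-order semi-derivability of \(\Phi\) in direction \((p,\bar v)\), with the explicit formula \eqref{eq:DDPhi-str}.

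Finally, Theorem~\ref{thm:DDS-semideriv} gives semi-derivability of \(S\) and \eqref{eq:DDS-formula}. Replacing the constraint \(w\in\DD H(\bar p,\bar x,u)(p)\) by the two conditions in \eqref{eq:DDH-str} yields \eqref{eq:DDS-struct}.

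The main obstacle is bookkeeping rather than analysis. One has to make sure the direction-dependent hypotheses \((A_5)\) and \((A_6)\), which are stated for \(u\in\Delta_1\), actually cover the given \(u\); the membership \(u\in\Delta_1(\bar p,\bar x,p,\bar v)\) established above does exactly this. One must also check that \(f\in C^1\) near \((\bar p,\bar x)\), as supplied by \((A_2)\), suffices for the mean-value steps used in Theorem~\ref{thm:DDS-semideriv}.
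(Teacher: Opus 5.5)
Your proposal is correct and follows the paper's proof essentially step for step. Like the paper, you place \(\bar x\in\Delta_0(\bar p,\bar y)\) and \(u\in\Delta_1(\bar p,\bar x,p,\bar v)\) via Proposition~\ref{prop:compat1}, verify \eqref{eq:DDH-str-fo-compat} by the same sequence argument to invoke Proposition~\ref{prop:DDH-str}, apply Theorem~\ref{thm:DDPhi-str} for \(\Phi\), and then apply Theorem~\ref{thm:DDS-semideriv} with \eqref{eq:DDH-str} substituted. The only slip is notational: the graph condition for Theorem~\ref{thm:DDPhi-str} is \((p,\bar v)\in\gph D\Phi(\bar p,\bar y)\), not \((\bar p,\bar v)\).
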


\begin{proof}
Since \(\bar y=f(\bar p,\bar x)\) and
\(\bar x\in H(\bar p)\), we have
\(\bar x\in\Delta_0(\bar p,\bar y)\). Condition \((A_2)\) in
assumption \textup{(i)}, applied at \(\bar x\), gives that \(f\) and
\(g\) are \(C^1\) near \((\bar p,\bar x)\). Condition \((A_3)\) at
\(\bar x\) gives Robinson metric regularity of \(H\)
along \(\Omega\) at \((\bar p,\bar x)\).

Since \(u\in DS(\bar p,\bar x)(p)\),
Proposition~\ref{prop:compat1} gives
\[
 u\in DH(\bar p,\bar x)(p),
 \qquad
 \bar v=\nabla f(\bar p,\bar x)(p,u)
 \in D\Phi(\bar p,\bar y)(p).
\]
Thus \(u\in\Delta_1(\bar p,\bar x,p,\bar v)\). Condition \((A_5)\) in
assumption \textup{(i)}, applied at \((\bar x,u)\), gives second-order
semi-derivability of \(f\) and \(g\) in direction \((p,u)\),
second-order regularity of \(\Omega\) at \(\bar x\) in direction \(u\),
and second-order regularity of \(D\) at \(g(\bar p,\bar x)\) in direction
\(\nabla g(\bar p,\bar x)(p,u)\).

It remains to verify the first-order compatibility in
\eqref{eq:DDH-str-fo-compat}. From
\(u\in DH(\bar p,\bar x)(p)\), choose \(t_k\downarrow0\) and
\(u_k\to u\) such that
\(\bar x+t_ku_k\in H(\bar p+t_kp)\). The structured representation of
\(H\) gives
\[
 \bar x+t_ku_k\in\Omega,
 \qquad
 g(\bar p+t_kp,\bar x+t_ku_k)\in D.
\]
The first membership yields \(u\in T(\Omega,\bar x)\). Since \(g\) is
\(C^1\) near \((\bar p,\bar x)\),
\[
 g(\bar p+t_kp,\bar x+t_ku_k)
 =g(\bar p,\bar x)
  +t_k\nabla g(\bar p,\bar x)(p,u)+o(t_k).
\]
The second membership therefore yields
\(\nabla g(\bar p,\bar x)(p,u)\in
T(D,g(\bar p,\bar x))\). Hence \eqref{eq:DDH-str-fo-compat} holds, and
Proposition~\ref{prop:DDH-str} gives \eqref{eq:DDH-str}, which coincides
with the constraint part of the bracketed set in \eqref{eq:DDS-struct}.

The second relation supplied by Proposition~\ref{prop:compat1} means
\((p,\bar v)\in\gph D\Phi(\bar p,\bar y)\). Together with \((A_1)\) and
conditions \((A_2)\), \((A_3)\), \((A_4)\), \((A_5)\), and \((A_6)\) in
assumption \textup{(i)}, this enables
Theorem~\ref{thm:DDPhi-str}. It gives \eqref{eq:DDPhi-str} and the
second-order semi-derivability of \(\Phi\) at \((\bar p,\bar y)\) in
direction \((p,\bar v)\). We have now verified hypotheses
\textup{(i)--(iii)} of Theorem~\ref{thm:DDS-semideriv}; assumption
\textup{(ii)} above is its hypothesis \textup{(iv)}. Applying that theorem
and substituting \eqref{eq:DDH-str} into \eqref{eq:DDS-formula} gives
\eqref{eq:DDS-struct} and the second-order semi-derivability of \(S\).
\end{proof}

\begin{remark}{The classical \(C^{2}\) case}{C2-case}
If \(f\) and \(g\) are \(C^{2}\), then \(d^{2}f\) and \(d^{2}g\) equal one
half of the corresponding Hessian quadratic forms under the convention of
Remark~\ref{rem:2nd-order-conv}.
\end{remark}

{For polyhedral inequality/equality systems, the second-order
tangent set to \(D\) is explicit. This gives the following specialization
of \eqref{eq:DDS-struct}.}

\begin{corollary}{Polyhedral inequality/equality system}{ineqeq}
Assume, in addition to the hypotheses of Theorem~\ref{thm:DDS-struct},
that the constraint has the form
\[
    H(p) = \{x\in \Omega\,:\,h_{i}(p,x)\leqslant 0\;(i\in I),\; h_{j}(p,x)=0\;(j\in I_{0})\},
\]
with each \(h_{i},h_{j}\) being \(C^{1}\) and second-order semi-derivable
at \((\bar p,\bar x)\) in direction \((p,u)\). Denote by
\(I(\bar p,\bar x):=\{i\in I\,:\,h_{i}(\bar p,\bar x)=0\}\) the active index set.
Then \(\DD S(\bar p,\bar x,u)(p)\) is characterized as in
\eqref{eq:DDS-struct} with \(g=(h_{i})_{i\in I\cup I_{0}}\) and
\(D=\R^{|I|}_{-}\times\{0\}^{|I_{0}|}\). The second-order tangent set
\(T^{2}(D,g(\bar p,\bar x),\nabla g(\bar p,\bar x)(p,u))\) then takes the
explicit form
\[
    \bigl\{d\in\R^{|I|+|I_{0}|}\,:\,
      d_{i}\leqslant 0\text{ for all }i\in I(\bar p,\bar x)\text{ with }\nabla h_{i}(\bar p,\bar x)(p,u)=0,\;
      d_{j}=0\text{ for }j\in I_{0}\bigr\}.
\]
\end{corollary}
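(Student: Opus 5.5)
The corollary is a direct specialization of Theorem~\ref{thm:DDS-struct}, so the plan has two steps. First, check that the inequality/equality system fits that theorem's framework. Second, compute the second-order tangent set of the polyhedral set \(D\) explicitly.

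\emph{Step 1: reformulation.} Set \(g:=(h_i)_{i\in I\cup I_0}\) and \(D:=\R^{|I|}_-\times\{0\}^{|I_0|}\). Then \(H(p)=\{x\in\Omega:g(p,x)\in D\}\) is exactly of the form \eqref{eq:Hstruct}, with \(D\) closed. The componentwise hypotheses on \(h_i,h_j\) make \(g\) \(C^1\) and second-order semi-derivable at \((\bar p,\bar x)\) in direction \((p,u)\), since semi-derivability of a vector map is equivalent to that of each component. All remaining hypotheses, namely \((A_1)\)--\((A_6)\) and (VDB), are assumed through Theorem~\ref{thm:DDS-struct}. That theorem therefore gives \eqref{eq:DDS-struct} verbatim for this \(g\) and \(D\).

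\emph{Step 2: the tangent set.} Put \(\bar d:=g(\bar p,\bar x)\) and \(e:=\nabla g(\bar p,\bar x)(p,u)\). Since \(D\) is a convex polyhedron, Definition~\ref{def:2ndtan} (via \cite[Prop.~3.34]{BonnansShapiro2000}) gives
\[
T^2(D,\bar d,e)=T(T(D,\bar d),e).
\]
Here \(e\in T(D,\bar d)\); this is the first-order compatibility established in the proof of Theorem~\ref{thm:DDS-struct}.

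\emph{Computing the cone.} Because \(D\) is a product, its tangent cone is the product of the componentwise cones:
\begin{itemize}
\item \(T(\R_-,\bar d_i)=\R\) when \(\bar d_i<0\);
\item \(T(\R_-,\bar d_i)=\R_-\) when \(i\in I(\bar p,\bar x)\);
\item \(T(\{0\},0)=\{0\}\) for equality indices.
\end{itemize}
Taking the tangent cone again at \(e\), componentwise:
\begin{itemize}
\item inactive \(i\) gives \(\R\);
\item active \(i\) with \(e_i<0\) gives \(\R\);
\item active \(i\) with \(e_i=0\) gives \(\R_-\);
\item \(j\in I_0\) gives \(\{0\}\).
\end{itemize}
This yields exactly the displayed set. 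To justify the product rule for second-order tangent sets, I will use that each factor is second-order regular, so the limsup and liminf agree componentwise and the product identity follows directly from the definitions.

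\emph{Main subtlety.} The only real care needed is the case bookkeeping: an active inequality with \(e_i<0\) imposes no second-order restriction. One can also check this directly: \(\bar d_i+te_i+t^2d_i\le0\) holds for small \(t\) regardless of \(d_i\).
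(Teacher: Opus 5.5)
Your proposal is correct and follows essentially the paper's own argument. You apply Theorem~\ref{thm:DDS-struct} with \(g=(h_i)_{i\in I\cup I_0}\) and the polyhedral \(D\), then use first-order compatibility \(\nabla g(\bar p,\bar x)(p,u)\in T(D,g(\bar p,\bar x))\), which forces \(\nabla h_j(\bar p,\bar x)(p,u)=0\) for \(j\in I_0\) and \(\nabla h_i(\bar p,\bar x)(p,u)\le 0\) for active \(i\), together with the componentwise case split in which only active indices with vanishing first-order term contribute \(d_i\le 0\); your explicit route through \(T^{2}=T(T(D,\cdot),\cdot)\) and the product rule for second-order regular factors simply spells out the ``standard pointwise characterization for polyhedral cones'' that the paper invokes.
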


\begin{proof}
See Appendix~\ref{app:proofs} (proof of Corollary~\ref{cor:ineqeq}).
\end{proof}

{We close with verifiability. Robinson metric regularity in
condition \((A_3)\) can be checked by standard
Mangasarian--Fromovitz/Robinson-type conditions. See, for example,
\cite[Sect.~2.3.3]{BonnansShapiro2000}. Second-order regularity of
\(\Omega\) and \(D\) is automatic for convex polyhedra. The remaining
decision-level condition is \((VDB)\). Remark~\ref{rem:VDB-reading}
identifies it with metric subregularity of the localized residual
\(\Psi_H\) in the decision variable, locally uniformly in \(p\), while
Proposition~\ref{prop:VDB-sharp} provides a sharp scalarization criterion.
Constant-rank-type conditions such as
\cite{MinchenkoTarakanov2015,MinchenkoStakhovski2011,Bondarevsky2016}
can be used to verify the regularity assumptions on \(H\).}

\begin{proposition}{{Sufficient conditions for Robinson metric regularity}}{RMR-suff}
{Let \((\bar p,\bar x)\in\gph H\) for the structured map
\eqref{eq:Hstruct}, where \(\Omega\subset X\) and \(D\subset\R^{q}\) are
closed convex sets and \(g\) is \(C^{1}\) near \((\bar p,\bar x)\).
\begin{enumerate}[label=\textup{(\roman*)}]
\item If
      \[
          \nabla_{x}g(\bar p,\bar x)\,T(\Omega,\bar x)
          -T\bigl(D,g(\bar p,\bar x)\bigr)=\R^{q},
      \]
      then \(H\) is Robinson metrically regular along \(\Omega\) at
      \((\bar p,\bar x)\).
\item For
      \[
          H(p)=\{x\in\Omega:\ h_{i}(p,x)\leqslant 0\ (i\in I),\
          h_{j}(p,x)=0\ (j\in I_{0})\},
      \]
      set \(I(\bar p,\bar x):=\{i\in I:h_{i}(\bar p,\bar x)=0\}\) and
      \(h_{I_0}:=(h_j)_{j\in I_0}\). The condition in \textup{(i)} holds if
      \[
          \nabla_x h_{I_0}(\bar p,\bar x)T(\Omega,\bar x)=\R^{|I_0|}
      \]
      and there exists \(d\in T(\Omega,\bar x)\) with
      \[
          \nabla_{x}h_{j}(\bar p,\bar x)\,d=0\ (j\in I_{0}),
          \qquad
          \nabla_{x}h_{i}(\bar p,\bar x)\,d<0\ (i\in I(\bar p,\bar x)).
      \]
      Hence \(H\) is Robinson metrically regular along \(\Omega\) at
      \((\bar p,\bar x)\).
\end{enumerate}}
\end{proposition}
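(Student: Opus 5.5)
For part (i), I would derive Robinson metric regularity along \(\Omega\) from Robinson's stability theorem for convex constraint systems, as in Robinson~\cite{Robinson1976} and \cite[Thm.~2.87 and Sect.~2.3.3]{BonnansShapiro2000}. View the system \(x\in\Omega\), \(g(p,x)\in D\) as the parametrized inclusion \(G(p,x):=g(p,x)\in D\) with abstract constraint \(x\in\Omega\). Since \(\Omega\) and \(D\) are closed and convex, \(T(\Omega,\bar x)=\cl\cone(\Omega-\bar x)\) and \(T(D,g(\bar p,\bar x))=\cl\cone(D-g(\bar p,\bar x))\). The stated tangent-cone condition is therefore the closed-cone form of Robinson's constraint qualification
\[
0\in\Int\bigl\{g(\bar p,\bar x)+\nabla_xg(\bar p,\bar x)(\Omega-\bar x)-D\bigr\}.
\]
This passage is the step needing care. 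The two cones are convex, so their difference is a convex cone, and it equals \(\R^q\). I would show that the non-closed version \(\nabla_xg\,\cone(\Omega-\bar x)-\cone(D-g(\bar p,\bar x))\) is a dense convex cone in \(\R^q\). A convex set whose closure is \(\R^q\) has nonempty relative interior equal to that of its closure, so the cone is all of \(\R^q\). Taking a finite generating simplex around \(0\) and rescaling then gives the interior-point form displayed above.

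With Robinson's condition in hand, Robinson's theorem gives the estimate
\[
\dist(x,H(p))\le\alpha\,\dist(g(p,x),D)
\]
for \(p\) near \(\bar p\) and \(x\in\Omega\) near \(\bar x\). Here \(g\) is \(C^1\) jointly in \((p,x)\), so the parameter enters through continuity of \(g\) and \(\nabla_xg\). The condition \(\dist(g(p,x),D)<\gamma\) is automatically met near \((\bar p,\bar x)\) by continuity, so it imposes no extra restriction. This is exactly Definition~\ref{def:RMR}.

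For part (ii), the data are \(g=(h_I,h_{I_0})\) and \(D=\R^{|I|}_-\times\{0\}^{|I_0|}\). Then \(T(D,g(\bar p,\bar x))\) consists of the vectors whose components are \(\le0\) on \(I(\bar p,\bar x)\), free on the inactive indices in \(I\), and \(0\) on \(I_0\). Given a target \(y=(y_I,y_{I_0})\), I would proceed in three steps.
\begin{enumerate}[label=(\arabic*)]
\item Use surjectivity of \(\nabla_xh_{I_0}\) on \(T(\Omega,\bar x)\) to pick \(d_0\in T(\Omega,\bar x)\) with \(\nabla_xh_{I_0}d_0=y_{I_0}\).
\item Add \(sd\), where \(d\) is the MFCQ direction and \(s>0\) is large. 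Then \(d_0+sd\in T(\Omega,\bar x)\), since the tangent cone of a convex set is a convex cone. The equality rows are unchanged, because \(\nabla_xh_jd=0\) for \(j\in I_0\).
\item For each active \(i\), the value \(\nabla_xh_i(d_0+sd)\) becomes \(<y_i\) for large \(s\). The residual \(\nabla_xh_i(d_0+sd)-y_i\) is then \(\le 0\), hence lies in \(T(D,\cdot)\) in that component. Inactive components are free, and the \(I_0\) components match exactly.
\end{enumerate}
Thus \(y\in\nabla_xg\,T(\Omega,\bar x)-T(D,g(\bar p,\bar x))\). This verifies the condition in (i), and Robinson metric regularity follows from part (i).
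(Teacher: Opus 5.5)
Your proposal is correct and follows essentially the paper's route. In (i) you identify the tangent-cone condition with Robinson's constraint qualification and invoke Robinson's stability theorem; you apply its parametric form directly, whereas the paper first gets the estimate at \(\bar p\) and then appeals to stability of the CQ as \(p\) varies. In (ii) you realize the equality components through the surjectivity hypothesis and then add a large multiple of the MFCQ direction \(d\), exactly as the paper does. Your density and relative-interior argument, which converts the closed tangent-cone form into the interior-point form \(0\in\Int\{g(\bar p,\bar x)+\nabla_xg(\bar p,\bar x)(\Omega-\bar x)-D\}\), supplies a step that the paper only asserts.
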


\begin{proof}
See Appendix~\ref{app:proofs} (proof of Proposition~\ref{prop:RMR-suff}).
\end{proof}

\section[Applications]{\texorpdfstring{{Applications}}{Applications}}\label{sec:applications}

\subsection{Parametric multi-objective portfolio optimization}%
\label{subsec:app-portfolio}

We first apply \eqref{eq:DDS-struct} to a smooth parametric
portfolio model. The formula reduces to linear active-set conditions,
second-order Taylor offsets, and the value-level condition encoded in
\(\DD\Phi\).

\emph{Standing scope of the applications.}
Corollary~\ref{cor:DDS-portfolio-verif} gives the second-order Dini formula
for \(S\) conditional on \((VDB)\); the
remaining regularity hypotheses \textup{(}LICQ, \((A_4)\) and \((A_6)\)\textup{)}
are finite linear systems on active sets. Proposition~\ref{prop:VDB-portfolio}
below verifies \((VDB)\) directly for a three-asset portfolio family. A
general structural criterion is not pursued here; alternative sufficient
routes and the underlying obstruction are collected in
Remarks~\ref{rem:VDB-reading} and~\ref{rem:VDB-sharp-and-quadratic}, and the
corresponding open problem is stated in the concluding section.

\paragraph{Decision variables and parameters.}
{Let \(x\in\R^{n}\) be the portfolio weights. Let \(p\) denote the
perturbation parameter, and let \(\mu(p),\Sigma(p),s(p),A(p),b(p)\) be
\(C^{2}\) data maps near \(\bar p\), with \(\Sigma(p)\) symmetric for every \(p\)
\textup{(}otherwise replace it by \((\Sigma(p)+\Sigma(p)^{\top})/2\)\textup{)},
so that \(\nabla_{x}\bigl(x^{\top}\Sigma(p)x\bigr)=2\Sigma(p)x\).
Fix a benchmark
\(x^{0}\in\R^{n}\) and long-only bounds \(\ell,x^{\max}\in\R^{n}\) with
\(\ell<x^{\max}\) componentwise \textup{(}no coordinate is fixed\textup{)}.
Consider}
the four-component vector objective
\begin{equation}\label{eq:portfolio-obj}
    f(p,x)
    :=
    \bigl(
        -\mu(p)^{\top}x,\;\,
        x^{\top}\Sigma(p)x,\;\,
        -s(p)^{\top}x,\;\,
        \|x-x^{0}\|_{2}^{2}
    \bigr),
\end{equation}
{subject to}
\begin{equation}\label{eq:portfolio-feas}
    \mathbf 1^{\top}x=1,\qquad
    \ell\le x\le x^{\max},\qquad
    A(p)x\le b(p).
\end{equation}
{The four components are negative expected return,
variance (minimize risk), negative ESG aggregate (maximize ESG),
and tracking error to \(x^{0}\). Minimization is with respect to
\(C=\R^{4}_{+}\). Related robust and ESG-integrated portfolio models are
surveyed in
\cite{MohsenyTonekabony2025,GarciaBernabeu2024,Varmaz2024,Kovalenko2025}.}

\paragraph{Reduction to the structured constraint system \eqref{eq:Hstruct-def}.}
Take
\[
    \Omega:=\bigl\{x\in\R^{n}\,:\,\mathbf 1^{\top}x=1,\;\,\ell\le x\le x^{\max}\bigr\},
    \qquad
    g(p,x):=A(p)x-b(p),\qquad
    D:=\R^{q}_{-}.
\]
Since \(\Omega\) and \(D\) are closed convex polyhedra and \(g\) is \(C^2\),
the model fits \eqref{eq:Hstruct-def}, and both sets are second-order regular
at every point and direction. A sufficient condition for Robinson metric
regularity of \(H\) along \(\Omega\) at \((\bar p,\bar x)\) is
\begin{equation}\label{eq:portfolio-CQ}
   0\in\operatorname{int}\bigl(g(\bar p,\bar x)+\nabla_{x}g(\bar p,\bar x)(\Omega-\bar x)-D\bigr).
\end{equation}
In this polyhedral model, LICQ for the active budget, box, and linear
inequality constraints implies \eqref{eq:portfolio-CQ}. The first-order
compatibility condition \eqref{eq:DDH-str-fo-compat} follows from
\(u\in DS(\bar p,\bar x)(\dot p)\subset DH(\bar p,\bar x)(\dot p)\).
The set \(S(p)\) consists of efficient portfolios, and
\(\Phi(p)=f(p,S(p))\) is the efficient return/risk/ESG/tracking-error
frontier.

\paragraph{Active-set notation.}
For \(\bar x\in \Omega\), \(u\in T(\Omega,\bar x)\), and \(\dot p\in P\), set
\(\bar v_{g}:=\nabla g(\bar p,\bar x)(\dot p,u)\) and define
\begin{align*}
   I_{\ell}&:=\{i\,:\,\bar x_{i}=\ell_{i}\}, & I_{\max}&:=\{i\,:\,\bar x_{i}=x^{\max}_{i}\}, & I_{g}&:=\{j\,:\,[A(\bar p)\bar x-b(\bar p)]_{j}=0\},\\
   I_{\ell}^{0}(u)&:=\{i\in I_{\ell}\,:\,u_{i}=0\}, & I_{\max}^{0}(u)&:=\{i\in I_{\max}\,:\,u_{i}=0\}, & I_{g}^{0}(\dot p,u)&:=\{j\in I_{g}\,:\,[\bar v_{g}]_{j}=0\}.
\end{align*}
For a general feasible point \(x\in\Omega\) we write
\(I_{\ell}(x),I_{\max}(x),I_{g}(x)\) for the active sets formed at \(x\), so
that \(I_{\ell}=I_{\ell}(\bar x)\), \(I_{\max}=I_{\max}(\bar x)\), and
\(I_{g}=I_{g}(\bar x)\).

\begin{lemma}{Polyhedral cones and explicit Taylor data for the portfolio model}{port-derivs}
With the active-set notation above,
\begin{align*}
   T(\Omega,\bar x) &=\bigl\{d\,:\,\mathbf 1^{\top}d=0,\;d_{i}\ge 0\;\forall i\in I_{\ell},\;d_{i}\le 0\;\forall i\in I_{\max}\bigr\},\\
   T^{2}(\Omega,\bar x,u) &=\bigl\{w\,:\,\mathbf 1^{\top}w=0,\;w_{i}\ge 0\;\forall i\in I_{\ell}^{0}(u),\;w_{i}\le 0\;\forall i\in I_{\max}^{0}(u)\bigr\},\\
   T(D,g(\bar p,\bar x)) &=\bigl\{e\,:\,e_{j}\le 0\;\forall j\in I_{g}\bigr\},\\
   T^{2}(D,g(\bar p,\bar x),\bar v_{g}) &=\bigl\{r\,:\,r_{j}\le 0\;\forall j\in I_{g}^{0}(\dot p,u)\bigr\}.
\end{align*}
Under the convention of Remark~\ref{rem:2nd-order-conv}, the Taylor
data of \(g(p,x)=A(p)x-b(p)\) and \(f\) of \eqref{eq:portfolio-obj} at
\((\bar p,\bar x)\) in direction \((\dot p,u)\), and the
\(x\)-Jacobians applied to a direction \(w\in\R^{n}\), are given by
\eqref{eq:port-derivs-g}--\eqref{eq:port-derivs-d2f} below.
\begin{align}\label{eq:port-derivs-g}
   \nabla_{x}g(\bar p,\bar x)w &= A(\bar p)w,\notag\\
   \nabla g(\bar p,\bar x)(\dot p,u)
   &= A(\bar p)u + (\nabla_{p}A(\bar p)\dot p)\bar x - \nabla_{p}b(\bar p)\dot p,\\
   d^{2}g(\bar p,\bar x)(\dot p,u)
   &= (\nabla_{p}A(\bar p)\dot p)u + d^{2}A(\bar p)(\dot p)\bar x - d^{2}b(\bar p)(\dot p).\notag
\end{align}
\begin{equation}\label{eq:port-derivs-Jxf}
   \nabla_{x}f(\bar p,\bar x)w
   = \bigl(-\mu(\bar p)^{\top}w,\;2\bar x^{\top}\Sigma(\bar p)w,\;-s(\bar p)^{\top}w,\;2(\bar x-x^{0})^{\top}w\bigr).
\end{equation}
\begin{equation}\label{eq:port-derivs-df}
   \nabla f(\bar p,\bar x)(\dot p,u)
   = \begin{pmatrix}
       -\mu(\bar p)^{\top}u-(\nabla_{p}\mu(\bar p)\dot p)^{\top}\bar x\\
       2\bar x^{\top}\Sigma(\bar p)u+\bar x^{\top}(\nabla_{p}\Sigma(\bar p)\dot p)\bar x\\
       -s(\bar p)^{\top}u-(\nabla_{p}s(\bar p)\dot p)^{\top}\bar x\\
       2(\bar x-x^{0})^{\top}u
   \end{pmatrix}.
\end{equation}
\begin{equation}\label{eq:port-derivs-d2f}
   d^{2}f(\bar p,\bar x)(\dot p,u)
   = \begin{pmatrix}
       -(\nabla_{p}\mu(\bar p)\dot p)^{\top}u-(d^{2}\mu(\bar p)(\dot p))^{\top}\bar x\\
       u^{\top}\Sigma(\bar p)u+2\bar x^{\top}(\nabla_{p}\Sigma(\bar p)\dot p)u+\bar x^{\top}d^{2}\Sigma(\bar p)(\dot p)\bar x\\
       -(\nabla_{p}s(\bar p)\dot p)^{\top}u-(d^{2}s(\bar p)(\dot p))^{\top}\bar x\\
       \|u\|^{2}
   \end{pmatrix}.
\end{equation}
\end{lemma}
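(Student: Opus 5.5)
The lemma has two parts: a polyhedral tangent-set part and a pure Taylor-expansion part. I would treat them separately.

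\emph{Tangent sets.} Both \(\Omega\) and \(D=\R^{q}_{-}\) are convex polyhedra. For a polyhedron written with finitely many affine equalities and inequalities, the contingent cone at \(\bar x\) keeps every equality and only the inequalities active at \(\bar x\), linearized. This gives \(T(\Omega,\bar x)\) at once: \(\mathbf 1^{\top}d=0\), \(d_i\ge0\) on \(I_\ell\), \(d_i\le0\) on \(I_{\max}\). It also gives \(T(D,g(\bar p,\bar x))\): \(e_j\le0\) exactly on \(I_g\).

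For the second-order sets I would invoke \cite[Prop.~3.34]{BonnansShapiro2000}, as already recorded after Definition~\ref{def:2ndtan}. It gives \(T^2(M,\bar x,u)=T(T(M,\bar x),u)\) for polyhedral \(M\) and \(u\in T(M,\bar x)\). Since \(T(\Omega,\bar x)\) is itself a polyhedral cone, its tangent cone at \(u\) is obtained the same way. One keeps the equality \(\mathbf 1^{\top}w=0\) and only those cone inequalities active at \(u\), namely \(u_i=0\) for \(i\in I_\ell\) or \(i\in I_{\max}\). These are exactly \(I_\ell^0(u)\) and \(I_{\max}^0(u)\).

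The same computation for \(T(D,g(\bar p,\bar x))\) at \(\bar v_g\) gives the constraints \(r_j\le0\) on \(I_g^0(\dot p,u)\). This requires \(\bar v_g\in T(D,g(\bar p,\bar x))\), which is the first-order compatibility condition \eqref{eq:DDH-str-fo-compat}. I would state that this is the intended standing assumption, since it holds whenever \(u\in DH(\bar p,\bar x)(\dot p)\).

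\emph{Taylor data.} All data maps are \(C^2\), so \(d^2\) of each map is the \(t^2\)-coefficient of its expansion along \((\bar p+t\dot p,\bar x+tu)\), by Remark~\ref{rem:2nd-order-conv}. I would write \(\mu(\bar p+t\dot p)=\mu(\bar p)+t\nabla_p\mu(\bar p)\dot p+t^2d^2\mu(\bar p)(\dot p)+o(t^2)\), and likewise for \(\Sigma,s,A,b\).

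Each component is then a product of such expansions with \(\bar x+tu\), and I would collect the coefficients of \(t\) and \(t^2\). For \(g\), the bilinear term \(A(p)x\) contributes \(A(\bar p)u+(\nabla_pA\dot p)\bar x\) at first order, and \((\nabla_pA\dot p)u+d^2A(\dot p)\bar x\) at second order; the affine term \(-b\) contributes its own coefficients. The linear components \(-\mu^{\top}x\) and \(-s^{\top}x\) work the same way, with the sign carried through.

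For the quadratic component \(x^{\top}\Sigma(p)x\) I would use symmetry of \(\Sigma(p)\). The first-order coefficient is then \(2\bar x^{\top}\Sigma u+\bar x^{\top}(\nabla_p\Sigma\dot p)\bar x\). The \(t^2\)-coefficient is \(u^{\top}\Sigma u+2\bar x^{\top}(\nabla_p\Sigma\dot p)u+\bar x^{\top}d^2\Sigma(\dot p)\bar x\). The tracking term \(\|\bar x+tu-x^0\|^2\) gives \(2(\bar x-x^0)^{\top}u\) and \(\|u\|^2\) directly.

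The \(x\)-Jacobian \eqref{eq:port-derivs-Jxf} follows by setting \(\dot p=0\) in the first-order coefficients, with \(w\) in place of \(u\).

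\emph{Main obstacle.} The step needing most care is the second-order tangent sets. One must check that the reduction \(T^2=T(T(\cdot),u)\) applies, which needs \(u\in T(\Omega,\bar x)\) and \(\bar v_g\in T(D,g(\bar p,\bar x))\). One must also check that no inactive constraint reappears. The expansion is routine bookkeeping, and the only subtlety there is the factor \(2\) from symmetry of \(\Sigma\).
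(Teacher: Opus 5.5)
Your proposal is correct and follows the paper's proof essentially verbatim: active-constraint descriptions of the polyhedral tangent cones, the reduction \(T^{2}=T(T(\cdot,\cdot),\cdot)\) from \cite[Prop.~3.34]{BonnansShapiro2000}, and collecting the \(t\)- and \(t^{2}\)-coefficients of the expansions along \((\bar p+t\dot p,\bar x+tu)\). You also point out that the formula for \(T^{2}(D,g(\bar p,\bar x),\bar v_g)\) requires \(\bar v_g\in T(D,g(\bar p,\bar x))\), since otherwise that set is empty; the paper's proof leaves this implicit, and in the paper's applications it follows from the first-order compatibility \eqref{eq:DDH-str-fo-compat} because \(u\in DH(\bar p,\bar x)(\dot p)\) there.
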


\begin{proof}
See Appendix~\ref{app:proofs} (proof of Lemma~\ref{lem:port-derivs}).
\end{proof}

\begin{corollary}{Portfolio second-order Dini derivative under verifiable conditions}{DDS-portfolio-verif}
Let \((\bar p,\bar x)\in\gph S\), \(\bar y=f(\bar p,\bar x)\),
\((\dot p,u)\in P\times\R^{n}\) with \(u\in DS(\bar p,\bar x)(\dot p)\), and
\(\bar v=\nabla f(\bar p,\bar x)(\dot p,u)\). Suppose \(\bar y\) is uniformly
Henig efficient at \(\bar p\), with a witnessing dilating cone \(K\).
Assume that the following conditions are fulfilled:
\begin{enumerate}[label=\textup{(\roman*)}]
\item for every \(\tilde x\in\Delta_{0}(\bar p,\bar y)\), the active gradients
      \(\{\mathbf 1\}\cup\{e_{i}:i\in I_{\ell}(\tilde x)\cup I_{\max}(\tilde x)\}\cup\{A(\bar p)_{j,:}^{\top}:j\in I_{g}(\tilde x)\}\)
      are linearly independent in \(\R^{n}\) \textup{(}LICQ\textup{)}, and, with
      \(T(H(\bar p),\tilde x)=\{d\in T(\Omega,\tilde x):A(\bar p)d\in T(D,g(\bar p,\tilde x))\}\),
      \[
         \{d\in T(H(\bar p),\tilde x):\nabla_{x}f(\bar p,\tilde x)d\in -K\}=\{0\};
      \]
\item for every \(\tilde x\in\Delta_{0}(\bar p,\bar y)\) and
      \(\tilde u\in\Delta_{1}(\bar p,\tilde x,\dot p,\bar v)\), with
      \(\tilde v_{g}=\nabla g(\bar p,\tilde x)(\dot p,\tilde u)\),
      \[
         \{\xi\in T^{2}(\Omega,\tilde x,\tilde u):A(\bar p)\xi\in T^{2}(D,g(\bar p,\tilde x),\tilde v_{g}),\ \nabla_{x}f(\bar p,\tilde x)\xi\in -K\}=\{0\};
      \]
\item \((VDB)\) holds at \((\bar p,\bar x)\). A data-level instance
      satisfying this condition is verified in Proposition~\ref{prop:VDB-portfolio}.
\end{enumerate}
Then \(S\) is second-order semi-derivable at \((\bar p,\bar x)\) in direction
\((\dot p,u)\) and, with the index sets and Taylor data of Lemma~\ref{lem:port-derivs},
\begin{equation}\label{eq:DDS-portfolio}
   \DD S(\bar p,\bar x,u)(\dot p)
   =\left\{w\in\R^{n}\,:\,
      \begin{array}{l}
        \mathbf 1^{\top}w=0,\\
        w_{i}\ge 0\;\forall i\in I_{\ell}^{0}(u),\quad
        w_{i}\le 0\;\forall i\in I_{\max}^{0}(u),\\[2pt]
        \bigl[A(\bar p)w+(\nabla_{p}A(\bar p)\dot p)u\\
        \;\;+d^{2}A(\bar p)(\dot p)\bar x-d^{2}b(\bar p)(\dot p)\bigr]_{j}\le 0\\
        \quad\forall j\in I_{g}^{0}(\dot p,u),\\[2pt]
        \nabla_{x}f(\bar p,\bar x)w+d^{2}f(\bar p,\bar x)(\dot p,u)\\
        \quad\in\DD\Phi(\bar p,\bar y,\bar v)(\dot p)
      \end{array}\right\},
\end{equation}
where the four-component vectors \(\nabla_{x}f(\bar p,\bar x)w\) and
\(d^{2}f(\bar p,\bar x)(\dot p,u)\) are given componentwise by
Lemma~\ref{lem:port-derivs}.
\end{corollary}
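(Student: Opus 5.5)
The plan is to obtain Corollary~\ref{cor:DDS-portfolio-verif} as a direct specialization of Theorem~\ref{thm:DDS-struct}. The work is to check its hypothesis (i), with \((A_1)\)--\((A_6)\), for the portfolio data, and then to substitute the explicit cones and Taylor data from Lemma~\ref{lem:port-derivs} into \eqref{eq:DDS-struct}. Hypothesis (ii) of Theorem~\ref{thm:DDS-struct} is exactly assumption (iii). Uniform Henig efficiency with witnessing cone \(K\) is assumed directly.

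First I check the standing conditions. \((A_1)\) holds because \(H(p)\subset\Omega\), and \(\Omega\) is bounded by the box \(\ell\le x\le x^{\max}\) uniformly in \(p\); Definition~\ref{def:lbls} notes that local boundedness of \(H\) suffices. For \((A_2)\), \(f\) and \(g\) are \(C^2\) because \(\mu,\Sigma,s,A,b\) are \(C^2\). Semi-derivability of \(H\) at each \((\bar p,\tilde x)\) with \(\tilde x\in\Delta_0\) follows from Proposition~\ref{prop:H-semider-polyhedral}, since \(\Omega\) and \(D\) are polyhedra, once Robinson regularity is known. That regularity is \((A_3)\), which I get from LICQ in (i) via Proposition~\ref{prop:RMR-suff}. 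LICQ makes the gradients of the equality constraint and of the active box and \(A\)-rows independent, so the equality Jacobian is surjective and some \(d\) makes all active inequality derivatives strictly negative. These are exactly the conditions of Proposition~\ref{prop:RMR-suff}(ii), after rewriting the box constraints of \(\Omega\) as inequalities or, equivalently, verifying \eqref{eq:portfolio-CQ}.

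Next, the objective-aware conditions. \((A_4)\) is literally the second display of (i), since the set \(T(H(\bar p),\tilde x)\) written there equals \(\{d\in T(\Omega,\tilde x):\nabla_x g(\bar p,\tilde x)d\in T(D,g(\bar p,\tilde x))\}\) with \(\nabla_xg=A(\bar p)\). For \((A_5)\), \(f\) and \(g\) are \(C^2\), hence second-order semi-derivable in every direction, and convex polyhedra are second-order regular (Definition~\ref{def:2ndtan}). For \((A_6)\), the polyhedral identity \(T''(M,x,u)=T(T(M,x),u)=T^2(M,x,u)\) from \cite[Prop.~3.34]{BonnansShapiro2000}, as used in Remark~\ref{rem:B1B2-reading}, converts the \(T''\) sets into the \(T^2\) sets appearing in (ii). The translation is pointwise in \((\tilde x,\tilde u)\), with \(v=\bar v\).

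Finally, I apply Theorem~\ref{thm:DDS-struct} and rewrite \eqref{eq:DDS-struct} by inserting the formulas of Lemma~\ref{lem:port-derivs}. The condition \(w\in T^2(\Omega,\bar x,u)\) yields the budget and reduced-box conditions. The \(g\)-condition becomes \([A(\bar p)w+d^2g(\bar p,\bar x)(\dot p,u)]_j\le0\) for \(j\in I_g^0(\dot p,u)\), with \(d^2g\) given by \eqref{eq:port-derivs-g}. The \(\Phi\)-condition is carried over unchanged. The main obstacle I expect is the polyhedral identification \(T''=T^2\) in \((A_6)\), and in particular making sure the asymptotic set \(T''\) really reduces to the critical-cone form \(T(T(\cdot),u)\) rather than something larger. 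I would argue it directly: near \(\bar x\) a polyhedron coincides locally with \(\bar x+T(\Omega,\bar x)\). The quotient \((M-\bar x-tu)/(tr)\) then lies in \(T(T(\Omega,\bar x),u)\) up to vanishing error for small \(t\), using closedness of finitely generated cones. A minor point is that the Robinson conclusion must be in the sense of Definition~\ref{def:RMR}, with constraints of \(\Omega\) kept separate. I would handle this by applying Proposition~\ref{prop:RMR-suff}(i) with \(T(\Omega,\tilde x)\) and using LICQ to show the required surjectivity \(A(\bar p)T(\Omega,\tilde x)-T(D,\cdot)=\R^q\).
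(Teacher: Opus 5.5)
Your proposal is correct and follows essentially the same route as the paper. Both verify $(A_1)$--$(A_6)$ for the portfolio data: compactness of $\Omega$, $C^2$ data, LICQ $\Rightarrow$ Robinson regularity at each $\tilde x\in\Delta_0(\bar p,\bar y)$, Proposition~\ref{prop:H-semider-polyhedral} for semi-derivability of $H$, and (i)/(ii) read as $(A_4)$/$(A_6)$ via $T''=T^2$ on polyhedra; both then apply Theorem~\ref{thm:DDS-struct} and substitute Lemma~\ref{lem:port-derivs}. Your two refinements are sound and somewhat more careful than the paper's citations: you obtain Robinson regularity through Proposition~\ref{prop:RMR-suff}(i) by checking $A(\bar p)T(\Omega,\tilde x)-T(D,g(\bar p,\tilde x))=\R^q$ from LICQ, and you prove $T''(M,x,u)=T(T(M,x),u)$ for polyhedra directly rather than attributing it to Bonnans--Shapiro.
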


Proposition~\ref{prop:canonical-K} supplies such a witnessing cone under
its common-scalarizer condition.

\begin{proof}
See Appendix~\ref{app:proofs} (proof of Corollary~\ref{cor:DDS-portfolio-verif}).
\end{proof}

\begin{proposition}{Data-level \textup{(VDB)} certificates for a three-asset portfolio}{VDB-portfolio}
Let \(r_0>0\), let \(\beta:(-r_0,r_0)\to(0,1)\) be \(C^2\) with
\(\beta(0)=1/10\), and restrict the portfolio model to \(p\in\R\),
\(n=3\), and \(q=2\). Set
\[
 x^0=e_1,\qquad \ell=0,\qquad x^{\max}=\mathbf 1,\qquad
 A(p)=\begin{pmatrix}1&-1&0\\0&1&0\end{pmatrix},\qquad
 b(p)=\binom{\beta(p)}{9/10},
\]
\[
 a(p):=\frac{1+\beta(p)}2,\qquad
 c(p):=\frac{1-\beta(p)}2,\qquad
 x^*(p):=(a(p),c(p),0).
\]
Consider either of the following objective data:
\begin{enumerate}[label=\textup{(\roman*)}]
\item \(\mu(p)=s(p)=e_1\) and \(\Sigma(p)=\operatorname{diag}(0,0,1)\);
\item \(\mu(p)=e_1\), \(s(p)=(2,1,0)^\top\), and, for some \(\rho\geqslant1\),
\[
 \Sigma(p)=
 \begin{pmatrix}
  1/a(p)&0&\rho\\
  0&1/c(p)&\rho\\
  \rho&\rho&\rho^2+1
 \end{pmatrix}.
\]
\end{enumerate}
Then, in either case,
\[
 S(p)=\{x^*(p)\},\qquad \Phi(p)=\{f(p,x^*(p))\},\qquad
 \dist(x,S(p))\leqslant\sqrt6\,\dist(f(p,x),\Phi(p))
 \quad\forall x\in H(p).
\]
Thus \((VDB)\) holds throughout both families with constant \(\sqrt6\).
The common scalarizer \(\lambda=\mathbf1\) and
\(K=K_{\mathbf1,1/2}\) of Proposition~\ref{prop:canonical-K} certify
uniform Henig efficiency. In case \textup{(ii)}, \(\Sigma(p)\) is positive
definite and \(\mu(p)\) and \(s(p)\) are not collinear.
\end{proposition}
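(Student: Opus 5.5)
The plan is to show that in both cases \(f(p,x^*(p))\) is the \emph{ideal point} of \(\mathcal F(p)\), that is, \(f(p,x^*(p))\leqslant_C f(p,x)\) for every \(x\in H(p)\), and then to bound \(\|x-x^*(p)\|\) by the componentwise objective gaps. First I describe \(H(p)\) explicitly. It consists of the \(x\) with \(x_1+x_2+x_3=1\), \(0\le x\le\mathbf 1\), \(x_1-x_2\le\beta(p)\) and \(x_2\le 9/10\). From \(x_2=1-x_1-x_3\) and \(x_1-x_2\le\beta\) we get \(x_1\le a(p)-x_3/2\le a(p)\). Also \(x^*(p)\in H(p)\), since \(c(p)<1/2<9/10\). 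Introduce the gaps
\[
g_1:=a-x_1,\qquad g_2:=x_3 .
\]
Both are nonnegative on \(H(p)\), and \(x-x^*=(-g_1,\;g_1-g_2,\;g_2)\).

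\emph{Ideal-point inequalities.} In case (i), \(f=(-x_1,x_3,-x_1,\|x-e_1\|^2)\). The first three components exceed their values at \(x^*\) by \(g_1,g_2,g_1\). For the tracking term, write \((1-x_1)^2+x_2^2+x_3^2=(c+g_1)^2+(c+g_1-g_2)^2+g_2^2\) and compare it with \(2c^2\). I expect to show the difference is nonnegative using \(g_1\ge g_2/2\), which follows from \(x_1\le a-x_3/2\). In case (ii), the first component again has gap \(g_1\). The ESG component satisfies \(2x_1+x_2=x_1+1-x_3\), so its gap is \(g_1+g_2\). The tracking term is the same as in case (i). For the variance, the key identity is
\[
x^\top\Sigma x=\frac{x_1^2}{a}+\frac{x_2^2}{c}+2\rho x_3(x_1+x_2)+(\rho^2+1)x_3^2 .
\]
By Cauchy--Schwarz, \(x_1^2/a+x_2^2/c\ge (x_1+x_2)^2\) because \(a+c=1\). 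Hence
\[
x^\top\Sigma x\ge (x_1+x_2+\rho x_3)^2+x_3^2=(1+(\rho-1)x_3)^2+x_3^2\ge1=x^{*\top}\Sigma x^* ,
\]
where the last inequality uses \(\rho\ge1\). Positive definiteness of \(\Sigma(p)\) follows from the same sum-of-squares form, and non-collinearity of \(\mu=e_1\) and \(s=(2,1,0)^\top\) is immediate.

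\emph{Consequences.} Since \(f(p,x^*)\) is \(C\)-below every feasible image, \(\Phi(p)=\{f(p,x^*(p))\}\). If \(x\in S(p)\), then \(f(p,x)=f(p,x^*)\), which forces \(g_1=g_2=0\) and hence \(x=x^*\). So \(S(p)=\{x^*(p)\}\). For the error bound, all components of \(f(p,x)-f(p,x^*)\) are nonnegative, so in both cases
\[
\dist(f(p,x),\Phi(p))^2\ge g_1^2+\max\{g_1^2,(g_1+g_2)^2\}\ \text{(case (ii))},\qquad \ge 2g_1^2+g_2^2\ \text{(case (i))}.
\]
On the other side,
\[
\|x-x^*\|^2=g_1^2+(g_1-g_2)^2+g_2^2\le 3(g_1^2+g_2^2).
\]
In case (i) this is at most \(3(2g_1^2+g_2^2)\le 6\,\dist^2\). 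In case (ii), \((g_1+g_2)^2\ge g_1^2+g_2^2\) gives the same conclusion. Thus (VDB) holds globally with constant \(\sqrt6\).

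\emph{Uniform Henig efficiency.} For \(\lambda=\mathbf1\in\Int C^*\) we have \(m_C(\mathbf1)=\min_{z\in C,\|z\|=1}\sum_i z_i=1>1/2\), so \(\varepsilon=1/2\) is admissible in Proposition~\ref{prop:canonical-K}. The ideal-point property gives \(\langle\mathbf1,w-y\rangle\ge0\) for all \(w\in\mathcal F(p)\) and \(y\in\Phi(p)\), for every \(p\), so \(K_{\mathbf1,1/2}\) certifies uniform Henig efficiency.

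The main obstacle is the tracking-error inequality \((c+g_1)^2+(c+g_1-g_2)^2+g_2^2\ge 2c^2\) on the feasible region. I would expand it as
\[
2c(2g_1-g_2)+g_1^2+(g_1-g_2)^2+g_2^2\ge0,
\]
and use \(2g_1\ge g_2\), which is exactly the constraint \(x_1-x_2\le\beta\). The other bounds are routine algebra.
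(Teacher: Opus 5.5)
Your route is the paper's. With $g_1=a-x_1$ and $g_2=x_3$ (the paper's $\delta$ and $z$) you parametrize $H(p)$ with $0\le g_2\le 2g_1$, show that $f(p,x^*(p))$ is the ideal point of $\mathcal F(p)$, and compare $\|x-x^*(p)\|$ with the objective gaps. Case (ii) and the Henig certificate are fine. Your Cauchy--Schwarz bound $x_1^2/a+x_2^2/c\ge(x_1+x_2)^2$ is a valid alternative to the paper's expansion around $\Sigma(p)x^*(p)=(1,1,\rho)^\top$.

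\textbf{The gap is in case (i).} There the second objective is $x^\top\Sigma(p)x=x_3^2$, not $x_3$, so its gap is $g_2^2$. The ideal-point property and $S(p)=\{x^*(p)\}$ survive, since $g_2^2\ge0$ and vanishes only when $g_2=0$. Write $d:=\dist(f(p,x),\Phi(p))$. Your estimate $d^2\geqslant 2g_1^2+g_2^2$ is false. Take $g_1=\delta$ small and $g_2=2\delta$, i.e.\ $x=(a-\delta,\,c-\delta,\,2\delta)\in H(p)$. The four increments are then $(\delta,\,4\delta^2,\,\delta,\,6\delta^2)$, so $d^2=2\delta^2+O(\delta^4)$, whereas $2g_1^2+g_2^2=6\delta^2$. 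Hence your chain $3(g_1^2+g_2^2)\le 3(2g_1^2+g_2^2)\le 6d^2$ breaks. With only $d^2\ge 2g_1^2$ available, your crude decision bound $\|x-x^*\|^2\le 3(g_1^2+g_2^2)\le 15g_1^2$ gives only the constant $\sqrt{15/2}$, not $\sqrt6$.

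\textbf{The fix} is what the paper does: use the spread constraint on the decision side. The expression $g_1^2+(g_1-g_2)^2+g_2^2$ is convex in $g_2$, so on $[0,2g_1]$ it is at most $\max\{2g_1^2,6g_1^2\}=6g_1^2$. The first objective gap alone gives $d=\|f(p,x)-f(p,x^*(p))\|\geqslant g_1$. Together these give the constant $\sqrt6$ in both cases at once, without the variance, ESG or tracking components.

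\textbf{A smaller point.} Positive definiteness of $\Sigma(p)$ does not follow from your lower bound $(x_1+x_2+\rho x_3)^2+x_3^2$, which vanishes when $x_3=0$ and $x_1=-x_2$. You need the exact Cauchy--Schwarz remainder $x_1^2/a+x_2^2/c-(x_1+x_2)^2=(cx_1-ax_2)^2/(ac)$, or the paper's Schur-complement computation.
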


\begin{proof}
See Appendix~\ref{app:proofs} (proof of Proposition~\ref{prop:VDB-portfolio}).
\end{proof}

\subsection{A two-branch example}\label{subsec:app-twobranch}

\begin{example}{A two-branch structured system satisfying \textup{(VDB)}}{twobranch-VDB}
Let \(P=\R\), \(X=Y=\R^2\), \(C=\R^2_+\), and
\[
 \Omega=[-1,2]\times\{0,1\},\qquad D=\R_-,\qquad
 \beta(p)=p+p^2,
\]
\[
 g(p,x)=\beta(p)-x_1,\qquad
 H(p)=\{x\in\Omega:g(p,x)\in D\},\qquad
 f(p,x)=(x_1,x_1).
\]
For \(p\) near \(0\), direct minimization gives
\[
 \mathcal F(p)=\{(z,z):\beta(p)\leqslant z\leqslant2\},\qquad
 \Phi(p)=\{(\beta(p),\beta(p))\},
\]
and
\begin{equation}\label{eq:twobranch-S}
 S(p)=\{(\beta(p),0),(\beta(p),1)\}.
\end{equation}
Thus, at \(\bar p=0\) and \(\bar y=(0,0)\),
\[
 \Delta_0(0,\bar y)=\{(0,0),(0,1)\},\qquad
 |\Delta_0(0,\bar y)|=2.
\]

\emph{Exact (VDB) certificate.}
Fix \(i\in\{0,1\}\), set \(\bar x_i=(0,i)\), and choose a neighborhood
\(W_i\) that meets \(\Omega\) only on the branch \(x_2=i\). After shrinking
a neighborhood \(V\) of \(0\), \((\beta(p),i)\in W_i\) for every \(p\in V\).
For \(x=(x_1,i)\in H(p)\cap W_i\), put
\(\delta=x_1-\beta(p)\geqslant0\). Then
\begin{equation}\label{eq:twobranch-distances}
 \dist\bigl(x,S(p)\cap W_i\bigr)=\delta,
 \qquad
 \dist\bigl(f(p,x),\Phi(p)\bigr)=\sqrt2\,\delta.
\end{equation}
Hence \textup{(VDB)} holds at each \((0,\bar x_i)\) with the exact constant
\(\kappa=1/\sqrt2\). This also follows from
Proposition~\ref{prop:VDB-sharp}: with
\(\lambda=(1,1)/\sqrt2\), \(m(p)=\sqrt2\,\beta(p)\), and
\(\alpha=\sqrt2\), conditions \eqref{eq:VDB-common-level}--\eqref{eq:VDB-sharp}
hold with scalar gap \(\sqrt2\,\delta\), and
\(\|\lambda\|/\alpha=1/\sqrt2\).

\emph{Uniform Henig efficiency and structured hypotheses.}
For \(\lambda=(1,1)/\sqrt2\),
\(m_C(\lambda)=1/\sqrt2\); choose \(\varepsilon=1/2\).
Every \((z,z)\in\mathcal F(p)\) satisfies
\[
 \langle\lambda,(z,z)-(\beta(p),\beta(p))\rangle
 =\sqrt2\,(z-\beta(p))\geqslant0.
\]
Proposition~\ref{prop:canonical-K} therefore supplies the common uniform
Henig witness \(K_{\lambda,1/2}\). The set \(\Omega\) is locally an affine
line at each \(\bar x_i\), while \(D\) is polyhedral, so both sets are
second-order regular in the directions used below. Moreover, \(f,g\) are
\(C^2\), compactness of \(\Omega\) implies \((A_1)\), and for
\(x\in\Omega\) near either branch,
\[
 \dist(x,H(p))=\max\{\beta(p)-x_1,0\}
 =\dist(g(p,x),D).
\]
Thus Robinson metric regularity holds with modulus one. The homogeneous
systems \((A_4)\) and \((A_6)\) reduce to a vector \((d_1,0)\) with
\(d_1\geqslant0\) and \((d_1,d_1)\in-K_{\lambda,1/2}\), which implies
\(d_1=0\). Directly,
\(DH(0,\bar x_i)(1)=D_\ell H(0,\bar x_i)(1)
=\{(d_1,0):d_1\geqslant1\}\). Hence the remaining first-order
semi-derivability requirement also holds, and the structured hypotheses are
satisfied at both base branches.

\emph{Branchwise second-order calculation.}
Take the parameter direction \(p=1\). From \eqref{eq:twobranch-S}, for
each \(i\in\{0,1\}\),
\[
 DS(0,\bar x_i)(1)=\{(1,0)\},\qquad
 \DD S(0,\bar x_i,(1,0))(1)=\{(1,0)\},
\]
because \(\beta(t)=t+t^2\) and the paper uses the \(t^2\)-coefficient
convention of Remark~\ref{rem:2nd-order-conv}. The structured formula gives
the same result. At either \(\tilde x\in\Delta_0(0,\bar y)\),
\(\nabla g(0,\tilde x)(1,(1,0))=0\) and
\(d^2g(0,\tilde x)(1,(1,0))=1\). Consequently,
\[
 \Delta_1(0,\tilde x,1,(1,1))=\{(1,0)\},\qquad
 \DD H(0,\tilde x,(1,0))(1)
 =\{(w_1,0):w_1\geqslant1\}.
\]
Since \(d^2f=0\) and \(\nabla_xf(w_1,0)=(w_1,w_1)\), the two branches in
\eqref{eq:DDPhi-str} contribute the same ray. Its \(C\)-minimum is
\[
 \DD\Phi(0,\bar y,(1,1))(1)=\{(1,1)\}.
\]
Substitution into \eqref{eq:DDS-struct} leaves only \(w_1=1\), and hence
recovers \(\DD S(0,\bar x_i,(1,0))(1)=\{(1,0)\}\) on both branches.
This makes the set-valued recovery mechanism concrete: the base
efficient-value fiber contains two decision branches.
\end{example}

\section[Conclusion]{\texorpdfstring{{Conclusion}}{Conclusion}}\label{sec:conclusion}

{This paper develops a second-order sensitivity theory for the efficient
solution map \(S\). Under \textup{(VDB)}, Proposition~\ref{prop:DDS-outer}
and Theorem~\ref{thm:DDS-semideriv} transfer second-order value information
to efficient decisions and give an exact Dini formula. For structured
constraints \(H(p)=\{x\in\Omega:g(p,x)\in D\}\), the formulas for \(\DD H\)
and \(\DD\Phi\) yield a primitive-data formula for \(\DD S\), including a
polyhedral specialization. The portfolio family and two-branch example
verify this framework analytically.}

{Further work concerns verifiable conditions for \((VDB)\) and their
relation to second-order sufficient and relaxed constant-rank conditions.
The VDB mechanism isolates the inverse step from efficient
values to efficient decisions. It replaces pointwise identification of
decision variations with an error bound relative to the entire efficient
solution set.}

\appendix
\section{Deferred proofs and numerical verification}\label{app:proofs}

The proofs collected here verify general framework results in the paper's
specific settings. They are deferred to improve the flow of the main text;
the corresponding statements remain in their original sections.
\begin{proof}[Proof of Proposition~\ref{prop:canonical-K}]
The positivity of \(m_C(\lambda)\) follows by compactness of
\(C\cap\{z:\|z\|=1\}\) and \(\lambda\in\Int C^*\). The defining
inequality is positively homogeneous, and its left-hand side minus its
right-hand side is concave; hence its superlevel set is a closed convex
cone. If \(z,-z\in K_{\lambda,\varepsilon}\), adding the two defining
inequalities gives \(0\geqslant2\varepsilon\|z\|\), so \(z=0\). Thus the
cone is pointed. For \(z\in C\setminus\{0\}\),
\(\langle\lambda,z\rangle\geqslant m_C(\lambda)\|z\|>
\varepsilon\|z\|\), which is precisely the strict inequality defining
its interior.

Fix \((q,y)\in G\). If \(w-y\in-K_{\lambda,\varepsilon}\), then
\(\langle\lambda,w-y\rangle\leqslant-\varepsilon\|w-y\|\). The common
scalarization inequality forces \(w=y\). Therefore
\((F(q)-y)\cap(-K_{\lambda,\varepsilon})=\{0\}\). The last assertion is
Definition~\ref{def:henig}\textup{(iv)}.
\end{proof}

\begin{proof}[Proof of Proposition~\ref{prop:H-semider-polyhedral}]
Write \(\bar z=g(\bar p,\bar x)\) and denote the set on the right-hand
side of \eqref{eq:DH-polyhedral} by \(L(p)\). If
\(d\in DH(\bar p,\bar x)(p)\), the defining feasible sequence and the
first-order expansion of \(g\) give
\(d\in T(\Omega,\bar x)\) and
\(\nabla g(\bar p,\bar x)(p,d)\in T(D,\bar z)\). Hence
\(DH(\bar p,\bar x)(p)\subseteq L(p)\).

For the reverse lower inclusion, fix \(d\in L(p)\) and an arbitrary
sequence \(t_k\downarrow0\). For a closed convex polyhedron \(Q\),
\(h\in T(Q,q)\) implies \(q+th\in Q\) for all sufficiently small
\(t\geqslant0\): write \(Q\) by finitely many linear inequalities and
check the active ones. Applying this fact to \(\Omega\) and \(D\) gives,
for all large \(k\),
\[
 x_k^0:=\bar x+t_kd\in\Omega,
 \qquad
 \bar z+t_k\nabla g(\bar p,\bar x)(p,d)\in D.
\]
Continuous differentiability yields
\[
 \dist\bigl(g(\bar p+t_kp,x_k^0),D\bigr)=o(t_k).
\]
Robinson metric regularity along \(\Omega\) therefore gives
\(\dist(x_k^0,H(\bar p+t_kp))=o(t_k)\). Choose
\(x_k\in H(\bar p+t_kp)\) with
\(\|x_k-x_k^0\|=o(t_k)\). Then
\((x_k-\bar x)/t_k\to d\). Since the sequence \(t_k\) was arbitrary,
\(d\in D_\ell H(\bar p,\bar x)(p)\). Thus
\(L(p)\subseteq D_\ell H\subseteq DH\subseteq L(p)\), proving both the
formula and semi-derivability.
\end{proof}

\begin{proof}[Proof of Lemma~\ref{lem:compact-domination}]
Set \(A:=\mathcal F(p)\cap(y-C)\). Then \(y\in A\). As \(C\) is a
pointed closed convex cone and \(A\) is nonempty and compact, \(A\) has the
domination property: each of its points is dominated by a \(C\)-minimal element
of \(A\) \cite[Ch.~2]{Luc1989},~\cite[Ch.~4]{Jahn2011}. Hence there is
\(z\in\Min_{C}A\) with \(y-z\in C\). If \(w\in\mathcal F(p)\) satisfies
\(z-w\in C\), then \(y-w=(y-z)+(z-w)\in C\), so \(w\in A\cap(z-C)=\{z\}\). Thus
\(z\in\Min_{C}\mathcal F(p)=\Phi(p)\). Since \(\Phi(p)=f(p,S(p))\) by definition
of \(S\), write \(z=f(p,\xi)\) with \(\xi\in S(p)\) and \(c:=y-z\in C\), giving
\(y=f(p,\xi)+c\).
\end{proof}

\begin{proof}[Proof of Proposition~\ref{prop:DDS-outer}]
Fix \(w\in\DD S(\bar p,\bar x,u)(p)\). By definition, there exist
\(t_{k}\downarrow 0\) and \(w_{k}\to w\) such that
\begin{equation}\label{eq:xk-def}
    x_{k}:=\bar x+t_{k}u+t_{k}^{2}w_{k}\in S(\bar p+t_{k}p)
    \qquad\forall k\in\N.
\end{equation}
By the definition of \(S\) in \eqref{eq:Sdef}, \(x_{k}\in H(\bar p+t_{k}p)\),
which, combined with the form \(x_{k}=\bar x+t_{k}u+t_{k}^{2}w_{k}\),
yields \(w\in\DD H(\bar p,\bar x,u)(p)\). This establishes
\eqref{eq:DDS-outer}.

Set \(y_{k}:=f(\bar p+t_{k}p,x_{k})\). By \eqref{eq:Sdef},
\(y_{k}\in\Phi(\bar p+t_{k}p)\). We expand \(y_{k}\) to second order.
Apply the \(C^{1}\) mean-value theorem to the second argument of \(f\) on
the segment \([\bar x+t_{k}u,\,\bar x+t_{k}u+t_{k}^{2}w_{k}]\):
\begin{equation}\label{eq:mvt}
    f(\bar p+t_{k}p,\bar x+t_{k}u+t_{k}^{2}w_{k})
    = f(\bar p+t_{k}p,\bar x+t_{k}u)+t_{k}^{2}\nabla_{x}f(\bar p+t_{k}p,\bar x+t_{k}u)w_{k}+R_{k},
\end{equation}
where \(\|R_{k}\|\leqslant t_{k}^{2}\|w_{k}\|\sup_{\xi_{k}}\|\nabla_{x}f(\bar p+t_{k}p,\xi_{k})-\nabla_{x}f(\bar p+t_{k}p,\bar x+t_{k}u)\|\)
with the supremum taken over the segment. Continuity of \(\nabla_{x}f\)
at \((\bar p,\bar x)\) and \(w_{k}\to w\) give \(R_{k}=o(t_{k}^{2})\). Combining this with second-order semi-derivability
of \(f\) at \((\bar p,\bar x)\) in direction \((p,u)\),
\begin{equation}\label{eq:f-expand}
    f(\bar p+t_{k}p,\bar x+t_{k}u) = \bar y+t_{k}\bar v+t_{k}^{2}z_{k},
    \qquad z_{k}\to z:=d^{2}f(\bar p,\bar x)(p,u).
\end{equation}
Inserting \eqref{eq:f-expand} into \eqref{eq:mvt} and using continuity
\(\nabla_{x}f(\bar p+t_{k}p,\bar x+t_{k}u)\to\nabla_{x}f(\bar p,\bar x)\)
together with boundedness of \(w_{k}\to w\),
\begin{equation}\label{eq:yk-expand}
    y_{k}=\bar y+t_{k}\bar v+t_{k}^{2}\bigl[\nabla_{x}f(\bar p,\bar x)w_{k}+z_{k}\bigr]+o(t_{k}^{2}),
\end{equation}
with \(\nabla_{x}f(\bar p,\bar x)w_{k}+z_{k}\to\nabla_{x}f(\bar p,\bar x)w+z\).
Dividing \((y_{k}-\bar y-t_{k}\bar v)/t_{k}^{2}\) and passing to the limit gives
\begin{equation}\label{eq:limit}
    \frac{y_{k}-\bar y-t_{k}\bar v}{t_{k}^{2}}\longrightarrow \nabla_{x}f(\bar p,\bar x)w+z.
\end{equation}
Since \(y_{k}\in\Phi(\bar p+t_{k}p)\), \eqref{eq:limit} and the definition of the second-order upper Dini derivative yield
\(\nabla_{x}f(\bar p,\bar x)w+z\in\DD\Phi(\bar p,\bar y,\bar v)(p)\).
With \(z=d^{2}f(\bar p,\bar x)(p,u)\), this is exactly
\eqref{eq:DDS-outer-image}.
\end{proof}

\begin{proof}[Proof of Corollary~\ref{cor:ineqeq}]
{This follows from Theorem~\ref{thm:DDS-struct} by substituting the explicit
\(D\) and computing its second-order contingent set via the standard
pointwise characterization for polyhedral cones. The equality
block \(d_{j}=0\) \((j\in I_{0})\) is well posed: first-order compatibility
\eqref{eq:DDH-str-fo-compat} implies
\(\nabla h_{j}(\bar p,\bar x)(p,u)=0\), so the corresponding
second-order tangent set is \(\{0\}\), not empty.
For the inequality block, first-order compatibility
\eqref{eq:DDH-str-fo-compat} gives
\(\nabla h_{i}(\bar p,\bar x)(p,u)\leqslant 0\) at every active index
\(i\in I(\bar p,\bar x)\). An active index with
\(\nabla h_{i}(\bar p,\bar x)(p,u)<0\) imposes no second-order restriction,
so \(d_{i}\) is free. An active index with
\(\nabla h_{i}(\bar p,\bar x)(p,u)=0\) contributes \(d_{i}\leqslant 0\).
Inactive indices leave \(d_{i}\) free. Together these give the displayed
form.}
\end{proof}

\begin{proof}[Proof of Proposition~\ref{prop:RMR-suff}]
{\textup{(i)} The displayed surjectivity is Robinson's constraint
qualification for the convex system \(x\in\Omega\), \(g(\bar p,x)\in D\) at
\(\bar x\). By Robinson's stability theorem \cite{Robinson1976},
\cite[Sect.~2.3.3]{BonnansShapiro2000}, it yields constants
\(\alpha,\gamma>0\) and a neighborhood \(U_{2}\) of \(\bar x\) such that
\[
    \dist(x,H(\bar p))\leqslant\alpha\,\dist(g(\bar p,x),D)
    \qquad\text{for } x\in\Omega\cap U_{2}\text{ with }\dist(g(\bar p,x),D)<\gamma.
\]
Since \(g\) is \(C^{1}\) jointly in \((p,x)\), this constraint qualification
is stable under the perturbation \(p\mapsto g(p,\cdot)\), so the estimate
persists for \(p\) in a neighborhood \(U_{1}\) of \(\bar p\), possibly
after shrinking \(U_{2}\), increasing the modulus \(\alpha\), and
decreasing the threshold \(\gamma\)
\cite[Ch.~3]{DontchevRockafellar2009}. This is exactly
Definition~\ref{def:RMR}.

\textup{(ii)} For \(D=\R^{|I|}_{-}\times\{0\}^{|I_{0}|}\),
\[
    T\bigl(D,g(\bar p,\bar x)\bigr)
    =\{d:\ d_{i}\leqslant 0\ (i\in I(\bar p,\bar x)),\ d_{j}=0\ (j\in I_{0})\}
\]
\textup{(}the inactive components are free\textup{)}. The condition
\(\nabla_x h_{I_0}(\bar p,\bar x)T(\Omega,\bar x)=\R^{|I_0|}\) gives the
equality components of the Robinson CQ. The direction \(d\) makes all active
inequality components strictly feasible, while keeping the equality
components fixed. Since \(T(\Omega,\bar x)\) is a convex cone, adding a
large multiple of \(d\) to a tangent vector realizing any prescribed equality
component gives the full surjectivity in \textup{(i)}. The conclusion follows
from \textup{(i)}.}
\end{proof}

\begin{proof}[Proof of Lemma~\ref{lem:port-derivs}]
The tangent cones follow directly from the defining polyhedral inequalities.
For the second-order sets, use
\(T^{2}(\cdot,\cdot,\cdot)=T(T(\cdot,\cdot),\cdot)\) for polyhedral sets
\cite[Prop.~3.34]{BonnansShapiro2000}.
{Expand}
\(g(\bar p+t\dot p,\bar x+tu)=A(\bar p+t\dot p)(\bar x+tu)-b(\bar p+t\dot p)\)
{and collect the \(t\)- and \(t^{2}\)-coefficients. For \(f\),
the first and third components are bilinear in \((p,x)\), the variance
component gives the displayed cross terms, and the tracking component is
parameter-independent, with \(t^{2}\)-coefficient \(\|u\|^{2}\).}
\end{proof}

\begin{proof}[Proof of Corollary~\ref{cor:DDS-portfolio-verif}]
The maps \(f\) and \(g\) are \(C^{2}\), hence second-order semi-derivable at
every point and direction, and \(\Omega,D\) are polyhedra, hence second-order
regular at every point and direction \cite[Prop.~3.34]{BonnansShapiro2000}. We
verify the hypotheses of Theorem~\ref{thm:DDS-struct} from
\textup{(i)}--\textup{(iii)} and the fixed witnessing cone in the
statement, and then make the formula explicit.

Moreover, \(\Omega\) is compact and \(H(q)\subset\Omega\). Hence every
level set relative to \(H\) is bounded uniformly in \(q\), so
condition \((A_1)\) holds.

\emph{Robinson metric regularity.} By \cite[Sect.~2.3.4]{BonnansShapiro2000},
the linear independence of the active gradients in \textup{(i)} is the
linear-independence constraint qualification for the active budget, box, and
inequality constraints at \(\tilde x\), and implies the branchwise version of
the Robinson constraint qualification \eqref{eq:portfolio-CQ} at
\((\bar p,\tilde x)\), hence Robinson metric regularity of
\(H\) along \(\Omega\) at \(\tilde x\), for every
\(\tilde x\in\Delta_{0}(\bar p,\bar y)\). Taking \(\tilde x=\bar x\) gives the
Robinson metric regularity at the base point needed in
Proposition~\ref{prop:DDH-str}. Robinson metric regularity is
the condition used in Theorem~\ref{thm:DDS-struct}; LICQ in
\textup{(i)} is a stronger, directly checkable sufficient condition for
this polyhedral portfolio model. The corollary remains valid under any
weaker constraint qualification, such as the Mangasarian--Fromovitz
condition, that ensures the same Robinson metric regularity. The first-order compatibility
\eqref{eq:DDH-str-fo-compat} holds because
\(u\in DS(\bar p,\bar x)(\dot p)\subset DH(\bar p,\bar x)(\dot p)\).

Proposition~\ref{prop:H-semider-polyhedral}, applied at each
\((\bar p,\tilde x)\), gives the required semi-derivability of \(H\)
in direction \(\dot p\) and the exact first-order tangent formula.

\emph{The first system in \textup{(i)} is \((A_4)\), and the
second system in \textup{(ii)} is \((A_6)\).} Since
\(\Omega,D\) are polyhedral and \(g(\bar p,\cdot)\) is affine in \(x\), the
tangent-cone formula \cite[Cor.~2.91]{BonnansShapiro2000} gives the displayed
expression for \(T(H(\bar p),\tilde x)\), while \(\nabla_{x}g(\bar p,\tilde x)=A(\bar p)\)
and, by Lemma~\ref{lem:port-derivs},
\[
   \nabla_{x}f(\bar p,\tilde x)d
   =\bigl(-\mu(\bar p)^{\top}d,\;2\tilde x^{\top}\Sigma(\bar p)d,\;-s(\bar p)^{\top}d,\;2(\tilde x-x^{0})^{\top}d\bigr).
\]
Hence the display in \textup{(i)} is exactly \((A_4)\) at
\(\tilde x\), independently of \(\Delta_1\). Because \(\Omega,D\) are
polyhedral, \(T''=T^{2}\) for them
\cite[Prop.~3.34]{BonnansShapiro2000}, so the display in \textup{(ii)} is a finite
homogeneous linear system on the active sets relaxed at \((\tilde x,\tilde u)\),
intersected with the cone condition \(\nabla_{x}f(\bar p,\tilde x)\xi\in -K\),
and is exactly \((A_6)\) at \((\tilde x,\tilde u)\). These objective-aware
systems admit nonzero feasible directions: a tangent moving along the
efficient frontier as a strict trade-off has objective image outside \(-K\),
so \((A_4)\) and \((A_6)\) can hold even when \(T(H(\bar p),\tilde x)\ne\{0\}\), in
contrast to the objective-free isolation condition
\(T(H(\bar p),\tilde x)=\{0\}\). Imposing \((A_4)\) at every
\(\tilde x\in\Delta_{0}(\bar p,\bar y)\) and \((A_6)\) at every pair
\((\tilde x,\tilde u)\) with
\(\tilde u\in\Delta_{1}(\bar p,\tilde x,\dot p,\bar v)\), together with
the smoothness, polyhedral regularity, semi-derivability of \(H\), Robinson
metric regularity, uniform Henig efficiency, and condition \((A_1)\)
verified above, yield conditions \((A_2)\), \((A_4)\), \((A_6)\), \((A_3)\), and
\((A_5)\) of Theorem~\ref{thm:DDPhi-str}.

\emph{Conclusion.} Condition \textup{(iii)} is \((VDB)\) at
\((\bar p,\bar x)\), and the witnessing cone fixed in the statement is
used in \textup{(i)}--\textup{(ii)}. Hence all hypotheses of
Theorem~\ref{thm:DDS-struct}
hold, so \(S\) is second-order semi-derivable at \((\bar p,\bar x)\) in
direction \((\dot p,u)\) and \(\DD S(\bar p,\bar x,u)(\dot p)\) equals
\eqref{eq:DDS-struct}. Substituting the closed-form cones and Taylor data of
Lemma~\ref{lem:port-derivs}, and reading the
\(T^{2}(D,g(\bar p,\bar x),\bar v_{g})\)-membership componentwise on
\(I_{g}^{0}(\dot p,u)\), yields \eqref{eq:DDS-portfolio}.
\end{proof}

\begin{proof}[Proof of Proposition~\ref{prop:VDB-portfolio}]
Fix \(p\in(-r_0,r_0)\), abbreviate \(a=a(p)\) and \(c=c(p)>0\), and
let \(x\in H(p)\). Set \(\delta:=a-x_1\) and \(z:=x_3\). The budget
equation and the spread constraint give
\[
 x=(a-\delta,c+\delta-z,z),\qquad 0\leqslant z\leqslant2\delta.
\]
Hence \(\delta\geqslant0\), and, with
\(h:=x-x^*(p)=(-\delta,\delta-z,z)\),
\[
 \|x-x^*(p)\|^2
 =\delta^2+(\delta-z)^2+z^2\leqslant6\delta^2.
\]

In case \textup{(i)}, the four objective increments relative to \(x^*(p)\)
are
\[
 \delta,\qquad z^2,\qquad \delta,\qquad
 2c(2\delta-z)+2(\delta^2-\delta z+z^2).
\]
They are nonnegative and vanish simultaneously only when
\(\delta=z=0\).

In case \textup{(ii)}, the leading \(2\times2\) block of \(\Sigma(p)\) is
positive definite, and its Schur complement is
\[
 \rho^2+1-(\rho,\rho)
 \begin{pmatrix}a&0\\0&c\end{pmatrix}
 \binom\rho\rho=1.
\]
Thus \(\Sigma(p)\) is positive definite. Since
\(\Sigma(p)x^*(p)=(1,1,\rho)^\top\), the four objective increments are
\[
 \delta,\qquad 2(\rho-1)z+h^\top\Sigma(p)h,\qquad
 \delta+z,\qquad
 2c(2\delta-z)+2(\delta^2-\delta z+z^2).
\]
Again they are nonnegative and vanish simultaneously only when
\(\delta=z=0\). In both cases, \(x^*(p)\) strictly dominates every other
feasible point. Hence \(S(p)=\{x^*(p)\}\) and
\(\Phi(p)=\{f(p,x^*(p))\}\). The first objective increment gives
\(\|f(p,x)-f(p,x^*(p))\|\geqslant\delta\); combining this with the
decision estimate proves the displayed \((VDB)\) bound. The componentwise
nonnegativity also makes \(\lambda=\mathbf1\) a common scalarizer. Since
\(m_{\R^4_+}(\mathbf1)=1>1/2\), Proposition~\ref{prop:canonical-K} gives
the uniform Henig cone.

For case \textup{(ii)}, the active budget, spread, and lower-\(x_3\)
gradients at \(x^*(p)\) are linearly independent. For the scalarization
\(\lambda=\mathbf1\), the corresponding multipliers can be taken as
\(0\), \(1+2c\), and \(2\rho\); hence strict complementarity holds, and
the scalarized Hessian \(2\Sigma(p)+2I\) is positive definite. Finally,
any direction \(d\) in either homogeneous active system satisfies
\(\mathbf1^\top d=0\), \(d_3\geqslant0\), and \(d_1-d_2\leqslant0\).
Its four objective derivatives are
\[
 -d_1,\qquad 2(\rho-1)d_3,\qquad -d_1+d_3,\qquad 2c(d_2-d_1),
\]
and are nonnegative. Membership in \(-K_{\mathbf1,1/2}\) makes all four
zero, after which the active system gives \(d=0\). Thus the
objective-aware tests \((A_4)\) and \((A_6)\) hold. If \(\rho=1\), the
second component is zero, but the first and third give \(d_1=d_3=0\),
and then \(\mathbf1^\top d=0\) gives \(d_2=0\).
\end{proof}

\subsection{Numerical Dini prediction check}\label{subsec:app-numerical}

We use the non-polynomial member of
Proposition~\ref{prop:VDB-portfolio} given by
\(\beta(p)=e^p/10\). Take \(\bar p=0\) and \(\dot p=1\). Since
\(\beta(0)=1/10\), the base pair is
\(\bar x=(11/20,9/20,0)\) and
\(\bar y=(-11/20,0,-11/20,81/200)\).
The active constraints are the budget equation, the lower bound on
\(x_3\), and the spread constraint. Their gradient matrix
\(M=\left(\begin{smallmatrix}1&1&1\\0&0&1\\1&-1&0\end{smallmatrix}\right)\)
has determinant \(2\), so LICQ holds. For the scalarization with
\(\lambda=(1,1,1,1)\in\operatorname{int}\R^4_+\), the budget, spread, and
lower-\(x_3\) multipliers are \(1,19/10,1\), so strict complementarity
holds. The scalarized Hessian is \(\operatorname{diag}(2,2,4)\), and the
critical face is \(\{0\}\), so the required injectivity is immediate.
The linear \((VDB)\) estimate comes instead from
Proposition~\ref{prop:VDB-portfolio}.

\paragraph{Uniform Henig certification.}
Let \(K:=K_{\mathbf1,1/2}
=\{z\in\R^4:\mathbf1^\top z\geqslant\frac12\|z\|\}\). By
Proposition~\ref{prop:canonical-K}, this cone is closed, pointed, convex,
and dilates \(\R^4_+\). Proposition~\ref{prop:VDB-portfolio} gives
\(f(p,H(p))-f(p,x^*(p))\subset\R^4_+\); hence
\(\R^4_+\cap(-K)=\{0\}\) makes the same \(K\) a uniform Henig witness for
\(|p|<\log10\), in particular with joint radius \(1/20\) at the base pair.

\paragraph{Non-polynomial primary instance.}
For \(x^*(t)=((1+\beta(t))/2,(1-\beta(t))/2,0)\),
\[
\begin{gathered}
 \beta(t)=\frac1{10}+\frac{t}{10}+\frac{t^2}{20}
          +\frac{t^3}{60}+O(t^4),\\
 u=\left(\frac1{20},-\frac1{20},0\right),\qquad
 w=\left(\frac1{40},-\frac1{40},0\right),\qquad
 \bar v=\left(-\frac1{20},0,-\frac1{20},-\frac9{100}\right).
\end{gathered}
\]
Here \(\Delta_0(\bar p,\bar y)=\{\bar x\}\) and
\(\Delta_1(\bar p,\bar x,\dot p,\bar v)=\{u\}\). A direction \(d\) in
either homogeneous active system satisfies
\(\mathbf1^\top d=0\), \(d_3\geqslant0\), and
\(d_1-d_2\leqslant0\), hence \(2d_1+d_3\leqslant0\). Moreover,
\(\nabla_xf(\bar p,\bar x)d
=(-d_1,0,-d_1,-\frac95d_1-\frac9{10}d_3)\in\R^4_+\).
If this vector also belongs to \(-K\), pointedness gives
\(\nabla_xf(\bar p,\bar x)d=0\). Its first component gives \(d_1=0\);
the active system then gives \(d_3=d_2=0\). Thus \((A_4)\) and \((A_6)\)
in Corollary~\ref{cor:DDS-portfolio-verif} hold.
Direct differentiation and \eqref{eq:DDS-portfolio} give
\begin{align}
 \DD\Phi(0,\bar y,\bar v)(1)
 &=\left\{\left(-\frac1{40},0,-\frac1{40},-\frac1{25}\right)\right\},
 \label{eq:DDPhi-numeric}\\
 \DD S(0,\bar x,u)(1)
 &=\left\{\left(\frac1{40},-\frac1{40},0\right)\right\}=\{w\}.
 \label{eq:DDS-numeric}
\end{align}

\paragraph{Perturbed solve.}
The scalarized QP has the lower bound on \(x_3\) and the spread constraint
active at all three perturbations below. With
\(w\) from \eqref{eq:DDS-numeric}, the exact residual is
\[
 \begin{aligned}
 r(t)&:=\|x^*(t)-\bar x-tu-t^2w\|\\
 &=\frac1{10\sqrt2}\left(e^t-1-t-\frac{t^2}{2}\right)
 =\frac{t^3}{60\sqrt2}+O(t^4).
 \end{aligned}
\]
Numerical evaluation of the perturbed solutions gives
\[
\begin{array}{c|c|c|c}
t&r(t)&t^3/(60\sqrt2)&r(t)/t^2\\ \hline
10^{-2}&1.1815\times10^{-8}&1.1785\times10^{-8}&1.1815\times10^{-4}\\
10^{-3}&1.1788\times10^{-11}&1.1785\times10^{-11}&1.1788\times10^{-5}\\
10^{-4}&1.1785\times10^{-14}&1.1785\times10^{-14}&1.1785\times10^{-6}
\end{array}
\]
Thus \(r(t)/t^2=O(t)\to0\), which confirms the second-order Dini
prediction beyond the exact-polynomial regime.

\bibliographystyle{plain}

\end{document}